\def\setliststart#1{\setcounter{\@listctr}{#1}%
  \addtocounter{\@listctr}{-1}}
\newtheorem{theorem}{Theorem}[section]
\newtheorem{lemma}[theorem]{Lemma}
\newtheorem{proposition}[theorem]{Proposition}
\newtheorem{remarks}[theorem]{Remark}
\newtheoremstyle{break}
  {\topsep}{\topsep}%
  {\itshape}{}%
  {\bfseries}{}%
  {\newline}{}%
\theoremstyle{break}
\newtheorem{definition}[theorem]{Definition}
\numberwithin{equation}{section}
\newcommand{\T}{\mathbb{T}}
\newcommand{\R}{\mathbb{R}}
\newcommand{\N}{\mathbb{N}}
\newcommand{\E}{\mathcal{E}}
\newcommand{\PP}{\mathcal{P}}
\newcommand{\C}{\mathcal{C}}
\newcommand{\ds}{\displaystyle}
\DeclareMathOperator*{\argmin}{argmin} 
\DeclareMathOperator*{\supp}{spt}
\DeclareMathOperator*{\esssup}{ess\ sup}
\DeclareMathOperator*{\ddiv}{div}
\DeclareMathOperator*{\eps}{\varepsilon}
\DeclareMathOperator*{\blambda}{\bar\lambda}
\def\moverlay{\mathpalette\mov@rlay}
\def\mov@rlay#1#2{\leavevmode\vtop{%
   \baselineskip\z@skip \lineskiplimit-\maxdimen
   \ialign{\hfil$\m@th#1##$\hfil\cr#2\crcr}}}
\newcommand{\charfusion}[3][\mathord]{
    #1{\ifx#1\mathop\vphantom{#2}\fi
        \mathpalette\mov@rlay{#2\cr#3}
      }
    \ifx#1\mathop\expandafter\displaylimits\fi}
\title[Ergodic behavior of Control and MFG depending on acceleretion]{Ergodic behavior of Control and Mean Field Games Problems depending on acceleration}
\author{Pierre Cardaliaguet \and Cristian Mendico}
\address{CEREMADE, Universit\'e Paris-Dauphine, PSL University, UMR CNRS 7534, CEREMADE, Place du Maréchal de Lattre de Tassigny - 75775 PARIS Cedex 16}
\email{cardaliaguet@ceremade.dauphine.fr}
\address{GSSI-Gran Sasso Science Institute, Viale F. Crispi 7, 67100  L'Aquila and CEREMADE, Universit\'e Paris-Dauphine, Place du Maréchal de Lattre de Tassigny - 75775 PARIS Cedex 16}
\email{cristian.mendico@gssi.it}
\date{\today}
\subjclass[2010]{}
\keywords{}
\begin{document}
\usetagform{blue}
\maketitle
\begin{abstract}
		The goal of this paper is to study the long time behavior of solutions of the first-order mean field game (MFG) systems with a control on the acceleration. The main issue for this is the lack of small time controllability of the problem, which prevents to define the associated ergodic mean field game problem in the standard way. To overcome this issue, we first study the long-time average of optimal control problems with control on the acceleration: we prove that the time average of the value function converges to an ergodic constant and represent this ergodic constant as a minimum of a Lagrangian over a suitable class of closed probability measure. This characterization leads us to define the ergodic MFG problem as a fixed-point problem on the set of closed probability measures. Then we also show that this MFG ergodic problem has at least one solution, that the associated ergodic constant is unique under the standard monotonicity assumption and that the time-average of the value function of the time-dependent MFG problem with control of acceleration converges to this ergodic constant.
	\end{abstract}
\tableofcontents
	\section{Introduction}
	\label{sec:Intro}

The main goal of this paper is to study the asymptotic behavior of mean field games (MFG) system with acceleration. Let us recall that such systems, first introduced in \cite{bib:CM, bib:YA}, aim to describe models with infinite number of interacting agents who  control  their acceleration. The MFG models we study here read as follows
\begin{align}\label{intro:MFGa}
	\begin{split}
	\begin{cases}
	-\partial_{t}u^{T}(t,x,v) + \frac{1}{2}|D_{v}u^{T}(t,x,v)|^{2}-\langle D_{x}u^{T}(t,x,v),v \rangle 	=F(x,v,m^{T}_{t}), & {\rm in}\; [0,T] \times \T^{d} \times \R^{d}
	\\
	\partial_{t}m^{T}_{t}-\langle v, D_{x}m^{T}_{t} \rangle- \ddiv\Big(m^{T}_{t}D_{v}u^{T}(t,x,v) \Big)=0, 	 &  {\rm in}\;  [0,T] \times \T^{d} \times \R^{d}
	\\
	 u^{T}(T,x,v)=g(x,v,m^{T}_{T}), \;  \quad m^{T}_{0}(x,v)=m_{0}(x,v)\; {\rm in}\;  \T^{d} \times \R^{d}. 
	\end{cases}
	\end{split}
\end{align} 
The above coupled system is a particular case of the more general class of MFG systems, which aim to describe the optimal value $u$ and the distribution $m$  of players, in a Nash equilibrium, for  differential games with infinitely many small players.  This models were introduced independently by Lasry and Lions \cite{bib:LL1,bib:LL2,bib:LL3} and Huang, Malham\'e and Caines \cite{bib:HCM1,bib:HCM2} and since these pioneering works the MFG theory has grown very fast:  see for instance  the survey papers and the monographs \cite{bib:NC, bib:DEV, bib:BFY, bib:CD1} and the references therein. In system \eqref{intro:MFGa} the pair $(u^T, m^T)$ can be interpreted as follows: $u^T$ is the value function of a typical small player for an optimal control problem of acceleration in which the cost depends on the time-dependent family of probability measures $(m^T_t)$; on the other hand, $m^T_t$ is, for each time $t$, the distribution of the small players; it evolves in time according to the continuity equation driven by the optimal feedback of the players.  

%

During the last years, the question of the long time behavior of solutions of (standard) MFG systems has attracted a lot of attention. Results describing the long-time average of solutions were obtained in several context: see \cite{bib:CLLP2, bib:CLLP1}, for second order systems on $\T^{d}$, and \cite{bib:CAR, bib:CCMW, bib:CCMW1}, for first order systems on $\T^{d}$, $\R^{d}$ and for state constraint case respectively. Recently, the first author and Porretta studied the long time behavior of solutions for the so-called Master equation associated with a second order MFG system, see \cite{bib:CP2}. In view of the results obtained in these works one would expect the limit of $u^{T}/T$ to be described by the following ergodic system
\begin{align}\label{intro:ergoMFG}
	\begin{split}
	\begin{cases}
	\frac{1}{2}|D_{v}u(x,v)|^{2}-\langle D_{x}u(x,v),v \rangle 	=F(x,v,m), &  (x,v) \in \T^{d} \times \R^{d}
	\\
	-\langle v, D_{x}m \rangle- \ddiv\Big(mD_{v}u(x,v) \Big)=0, 	 &  (x,v) \in  \T^{d} \times \R^{d}
	\\
	 \int_{\T^{d} \times \R^{d}}{m(dx,dv)}=1.
	\end{cases}
	\end{split}
\end{align} 
The main issue of this paper is that this ergodic system makes no sense. Indeed, as we explain below, even for problems without mean field interaction, we cannot expect to have a solution to the corresponding ergodic Hamilton-Jacobi equation (the first equation in \eqref{intro:ergoMFG}). As the drift of the continuity equation (the second equation in \eqref{intro:ergoMFG})  is given in terms of solution to the ergodic Hamilton-Jacobi equation, there is no hope to formulate the problem in this way. As far as we know, this is the first time this kind of problem is faced in the literature. \\

 To overcome the issue just described, we first study the ergodic Hamilton-Jacobi equation without mean field interaction. More precisely, in the first part of the paper we investigate the existence of the limit, as $T$ tends to infinity, of $u^T(0, \cdot,\cdot)/T$, where now $u^T$ solves the Hamilton-Jacobi equation (without mean field interaction)
$$
\left\{\begin{array}{l}		
-\partial_{t} u^T(t,x,v) + \frac{1}{2}|D_{v}u^T(t,x,v)|^{2}- \langle D_{x}u^T(t,x,v), v \rangle = F(x,v), \quad {\rm in}\;  [0,T] \times \T^{d} \times \R^{d}	\\
u^T(T,x,v)= 0 \; {\rm in }\; \T^d\times \R^d. 
\end{array}\right.
$$ 
Here $F:\R^d\times \R^d\to \R$ is periodic in space (the first variable) and coercive in velocity (the second one). 
Following the seminal paper  \cite{bib:LPV}, it is known that the existence of the limit of $u^T/T$ is related with the existence of a corrector, namely to a solution of the ergodic Hamilton-Jacobi equation:  
\begin{equation*}
	-\langle D_{x}u(x,v), v \rangle +\frac{1}{2}|D_{v}u(x,v)|^{2}=F(x,v) + \bar{c}, \quad (x,v) \in \times \T^{d} \times \R^{d},
\end{equation*}
for some constant $\bar c$.  However, we stress again the fact that due to the lack of coercivity and due to the lack of small time controllability of our model, we do not expect the existence of a continuous viscosity solutions of the ergodic equation. This problem has been overcome in several other frameworks: we can quote for instance \cite{bib:MG, bib:CNS, bib:XY, bib:PCE, bib:BA, bib:AL, bib:ABE, bib:ARG, bib:BQR, bib:GLM, bib:GV}, for related problems see also \cite{bib:ABG, bib:KDV}  and the references therein. Following techniques developed in \cite{bib:ARG} we prove in the first part of Theorem \ref{theo:main1} that the limit of $u^T/T$ exists and is equal to a constant. 
 However, this convergence result does not suffice to handle our MFG system of acceleration: indeed, we  also need to understand, when the map $F$ also depends on the extra parameter $m$, how this ergodic constant depends on $m$. For doing so, we follow  ideas from weak-KAM theory (see for instance \cite{bib:FA}) and characterize the ergodic constant in terms of closed probability measures: namely, we prove in the second part of Theorem \ref{theo:main1} that, for any $(x,v)\in\T^d\times \R^d$,  
 $$
 \lim_{T\to+\infty} \frac{u^T(0,x,v)}{T} =  \inf_{\mu \in \C} \int_{\T^{d} \times \R^{d} \times \R^{d}}{\left(\frac{1}{2}|w|^{2} +F(x,v)\right)\ \mu(dx,dv,dw)}
 $$
 where $\C$ is the set of Borel probability measures $\mu$ on $\T^d\times \R^d$ with suitable finite moments and which are closed in the sense that, for any test function $\varphi \in C^{\infty}_{c}(\T^{d} \times \R^{d})$,
		\begin{equation*}
			\int_{\T^{d} \times \R^{d} \times \R^{d}}{\Big(\langle D_{x}\varphi(x,v), v \rangle + \langle D_{v}\varphi(x,v), w \rangle \Big)\ \eta(dx,dv,dw)}=0,
		\end{equation*} 
(see also \Cref{def:closedmeasure}). \\

We now come back to our MFG of acceleration \eqref{intro:MFGa}.  
 In view of the characterization of the ergodic constant for the Hamilton-Jacobi equation without mean field interaction, it is natural to describe an equilibrium for the ergodic MFG problem with acceleration as a fixed-point problem on the Wasserstein space: we say that $(\bar \lambda, \bar \mu)\in \R\times \C$ is a solution of the ergodic MFG problem of acceleration if 
\begin{align*}
	\bar\lambda =\ & \inf_{\mu \in \C} \int_{\T^{d} \times \R^{d} \times \R^{d}}{\left(\frac{1}{2}|w|^{2} +F(x,v, \pi \sharp \bar\mu)\right)\ \mu(dx,dv,dw)}	
\\
=\ & \int_{\T^{d} \times \R^{d} \times \R^{d}}{\left(\frac{1}{2}|w|^{2} +F(x,v, \pi \sharp \bar{\mu})\right)\ \bar{\mu}(dx,dv,dw)},
\end{align*}
where $\pi:\T^d\times \R^d\times \R^d\to \T^d\times \R^d$ is the canonical projection onto the first two variables. 
We show that such an ergodic MFG problem with acceleration has a solution and that the associated ergodic constant $\bar \lambda$ is unique under the following monotonicity condition (first introduced in \cite{bib:LL1, bib:LL2}): there exists a constant $M_{F} > 0$ such that for any $m_{1}$, $m_{2} \in \PP(\T^{d} \times \R^{d})$
\begin{equation*}
\begin{array}{l}
\ds  \int_{\T^{d} \times \R^{d}}{\big(F(x,v,m_{1})-F(x,v,m_{2}) \big) \ (m_{1}(dx,dv)-m_{2}(dx,dv))} 
	\\
\ds \qquad \geq\  M_{F}\int_{\T^{d} \times \R^{d}}{\big(F(x,v,m_{1})-F(x,v,m_{2}) \big)^{2}\ dxdv},
\end{array}
\end{equation*}
see $(1)$ in  \Cref{thm:theo.main2}. The main result of the paper is the fact that, if $(u^T,m^T)$ solves the MFG system of acceleration \eqref{intro:MFGa}, then $u^T(0,x,v)/T$ converges, as $T$ tends to infinity, to the unique ergodic constant $\bar\lambda$ of the ergodic MFG problem, see $(2)$ in \Cref{thm:theo.main2}. The main  technical step for this is to rewrite the MFG system in terms of time-dependent closed measure (a kind of occupation measure in this set-up), see \Cref{thm:lintegral}, and to understand the long-time average of these measures.

\medskip\medskip\medskip 
The rest of this paper is organized as follows.
In \Cref{sec:MainResults}, we introduce the notation, some preliminaries and the main results of this paper. In \Cref{sec:ErgodicBehavior}, we study the long time averaged of the Hamilton-Jacobi equation without mean field interaction. \Cref{sec:AsymptoticMFG} is devoted to the analysis of the ergodic MFG problem and to the asymptotic behavior of the solution of the time dependent MFG system. 	\\
	
{\bf Acknowledgement.} The first author was partially supported by the ANR (Agence Nationale de la Recherche) project  ANR-12-BS01-0008-01, by the CNRS through the PRC grant 1611 and by the Air Force Office for Scientific Research grant FA9550-18-1-0494. The second author was partly supported by Istituto Nazionale di Alta Matematica (GNAMPA 2019 Research Projects).
	
	\section{Main results}
	\label{sec:MainResults}
	\subsection{Notations and preliminaries}
	
We write below a list of symbols used throughout this paper.
\begin{itemize}
	\item Denote by $\mathbb{N}$ the set of positive integers, by $\mathbb{R}^d$ the $d$-dimensional real Euclidean space,  by $\langle\cdot,\cdot\rangle$ the Euclidean scalar product, by $|\cdot|$ the usual norm in $\mathbb{R}^d$, and by $B_{R}$ the open ball with center $0$ and radius $R$.
	\item If $\Lambda$ is a real $n\times n$ matrix, we define the norm of $\Lambda$ by 
\[
\|\Lambda\|=\sup_{|x|=1,\ x\in\mathbb{R}^d}|\Lambda x|.
\]
\end{itemize}

Let $(X,{\bf d})$ be a metric space (in the paper, we use $X=\T^d\times \R^d$ or $X= \T^d\times \R^d\times \R^d$).
\begin{itemize}
\item For a Lebesgue-measurable subset $A$ of $X$, we let $\mathcal{L}(A)$ be the Lebesgue measure of $A$ and  $\mathbf{1}_{A}:X \rightarrow \{0,1\}$ be the characteristic function of $A$, i.e.,
\begin{align*}
\mathbf{1}_{A}(x)=
\begin{cases}
1  \ \ \ &x\in A,\\
0 &x \not\in A.
\end{cases}
\end{align*} 
We denote by $L^p(A)$ (for $1\leq p\leq \infty$) the space of Lebesgue-measurable functions $f$ with $\|f\|_{p,A}<\infty$, where   
\begin{align*}
& \|f\|_{\infty, A}:=\esssup_{x \in A} |f(x)|,
\\& \|f\|_{p,A}:=\left(\int_{A}|f|^{p}\ dx\right)^{\frac{1}{p}}, \quad 1\leq p<\infty.
\end{align*}
For brevity, $\|f\|_{\infty}$  and $\|f\|_{p}$ stand for  $\|f\|_{\infty,X}$ and  $\|f\|_{p,X}$ respectively.
\item $C_b(X)$ stands for the function space of bounded uniformly  continuous functions on $X$. $C^{2}_{b}(X)$ stands for the space of bounded functions on $X$ with bounded uniformly continuous first and second derivatives. 
$C^k(X)$ ($k\in\mathbb{N}$) stands for the function space of $k$-times continuously differentiable functions on $X$, and $C^\infty(X):=\cap_{k=0}^\infty C^k(X)$. 
 $C_c^\infty(X)$ stands for the space of functions in $C^\infty(X)$ with compact support. Let $a<b\in\mathbb{R}$.
  $AC([a,b];X)$ denotes the space of absolutely continuous maps $[a,b]\to X$.
  \item For $f \in C^{1}(X)$, the gradient of $f$ is denoted by $Df=(D_{x_{1}}f, ..., D_{x_{n}}f)$, where $D_{x_{i}}f=\frac{\partial f}{\partial x_{i}}$, $i=1,2,\cdots,d$.
Let $k$ be a nonnegative integer and let $\alpha=(\alpha_1,\cdots,\alpha_d)$ be a multiindex of order $k$, i.e., $k=|\alpha|=\alpha_1+\cdots +\alpha_d$ , where each component $\alpha_i$ is a nonnegative integer.   For $f \in C^{k}(X)$,
define $D^{\alpha}f:= D_{x_{1}}^{\alpha_{1}} \cdot\cdot\cdot D^{\alpha_{d}}_{x_{d}}f$. 
\end{itemize}

We recall here the notations and definitions of Wasserstein spaces and Wasserstein distance, for more details we refer to \cite{bib:CV, bib:AGS}. Here again we denote by  $(X,{\bf d})$  a metric space (having in mind $X=\T^d\times \R^d$ or $X= \T^d\times \R^d\times \R^d$). 
Denote by $\mathcal{B}(X)$ the  Borel $\sigma$-algebra on $X$ and by $\mathcal{P}(X)$ the space of Borel probability measures on $X$.
The support of a measure $\mu \in \mathcal{P}(X)$, denoted by $\supp(\mu)$, is the closed set defined by
\begin{equation*}
\supp (\mu) := \Big \{x \in X: \mu(V_x)>0\ \text{for each open neighborhood $V_x$ of $x$}\Big\}.
\end{equation*}
We say that a sequence $\{\mu_k\}_{k\in\mathbb{N}}\subset \mathcal{P}(X)$ is weakly-$*$ convergent to $\mu \in \mathcal{P}(X)$, denoted by
$\mu_k \stackrel{w^*}{\longrightarrow}\mu$,
  if
\begin{equation*}
\lim_{n\rightarrow \infty} \int_{X} f(x)\,\mu_n(dx)=\int_{X} f(x) \,\mu(dx), \quad  \forall f \in C_b(X).
\end{equation*}

For $p\in[1,+\infty)$, the Wasserstein space of order $p$ is defined as
\begin{equation*}
\mathcal{P}_p(\mathbb{R}^d):=\left\{m\in\mathcal{P}(\mathbb{R}^d): \int_{\mathbb{R}^d} {\bf d}(x_0,x)^p\,m(dx) <+\infty\right\},
\end{equation*}
for some (and thus all) $x_0 \in X$.
Given any two measures $m$ and $m^{\prime}$ in $\mathcal{P}_p(X)$,  define
\begin{equation}\label{def.transportplan}
\Pi(m,m'):=\Big\{\lambda\in\mathcal{P}(X \times X): \lambda(A\times X)=m(A),\ \lambda(X \times A)=m'(A),\ \forall A\in \mathcal{B}(X)\Big\}.
\end{equation}
The Wasserstein distance of order $p$ between $m$ and $m'$ is defined by
    \begin{equation*}\label{dis1}
          d_p(m,m')=\inf_{\lambda \in\Pi(m,m')}\left(\int_{X\times X}d(x,y)^p\,\lambda(dx,dy) \right)^{1/p}.
    \end{equation*}
    The distance $d_1$ is also commonly called the Kantorovich-Rubinstein distance and can be characterized by a useful duality formula (see, for instance, \cite{bib:CV})  as follows
\begin{equation}\label{eq:2-100}
d_1(m,m')=\sup\left\{\int_{X} f(x)\,m(dx)-\int_{X} f(x)\,m'(dx) \ |\ f:X\rightarrow\mathbb{R} \ \ \text{is}\ 1\text{-Lipschitz}\right\},
\end{equation}
for all $m$, $m'\in\mathcal{P}_1(X)$.

	\subsection{Calculus of variation with acceleration} 
	In our first main result we study the large time average of an optimal control problem of acceleration. 
	Let $L: \T^{d} \times \R^{d} \times \R^{d} \to \R $ be the Lagrangian function  defined as
	\begin{equation*}
	L(x,v,w)=\frac{1}{2}|w|^{2} + F(x,v)
	\end{equation*}
	where $F: \T^{d} \times \R^{d} \to \R$ satisfies the following assumptions:
	\begin{itemize}
	\item[({\bf F1})] $F$ is globally continuous with respect to both variables;
	\item[({\bf F2})] there exists $\alpha > 1$ and there exists a constant $c_{F} \geq 1$ such that for any $(x,v) \in \R^{d} \times \R^{d}$
	\begin{equation}\label{eq:Fgrowth}
	\frac{1}{c_{F}}|v|^{\alpha}-c_{F} \leq F(x,v) \leq c_{F}(1+|v|^{\alpha})
	\end{equation}
	and, without loss of generality, we assume $F(x,v) \geq 0$ for an $(x,v) \in \T^{d} \times \R^{d}$;
	\item[({\bf F3})] there exists a constant $C_{F} \geq 0$ such that 
$$
	|D_{x}F(x,v)| + |D_{v}F(x,v)| \leq C_{F}(1+|v|^{\alpha}).
$$
	\end{itemize}

	Let { $\Gamma$ be the set $C^1$ curves $\gamma:[0,+\infty) \to \T^{d}$ (endowed with the local uniform convergence of the curve and its derivative)} and for $(t,x,v) \in [0,T] \times \T^{d} \times \R^{d}$  let $\Gamma_{t}(x,v)$ be the subset of $\Gamma$ such that $\gamma(t)=x$ and $\dot\gamma(t)=v$. Define the functional $J^{t,T}: \Gamma \rightarrow \R$ as 
	\begin{align}\label{eq:functional}
	\begin{split}
	J^{t,T}(\gamma)=\int_{t}^{T}{\left(\frac{1}{2}|\ddot\gamma(s)|^{2}+F(\gamma(s), \dot\gamma(s)) \right)\ ds}, & \quad \text{if}\ \gamma \in H^{2}(0,T; \T^{d}),
	\end{split}
	\end{align}
 and  $J^{t,T}(\gamma)=+\infty$ if $\gamma \not\in H^{2}(0,T; \T^{d})$, and let $V^{T}(t,x,v)$ denote the value function associated with the functional $J^{t,T}$, i.e.
	\begin{equation}\label{eq:value}
	V^{T}(t,x,v)= \inf_{\gamma 	\in \Gamma_{t}(x,v)} J^{t,T}(\gamma).
	\end{equation}

Let $H$ be the Hamiltonian associated with the Lagrangian $L$, that is for any $(x,v,p_{v}) \in \T^{d} \times \R^{d} \times \R^{d}$,
\begin{equation*}
H(x,v, p_{v})=\frac{1}{2}|p_{v}|^{2}-F(x,v),	
\end{equation*}
where $p_{v} \in \R^{d}$ denotes the momentum variable associated with $v \in \R^{d}$.
Then, it is not difficult to see that the value function $V^{T}$ is a continuous viscosity solution of the following Hamilton-Jacobi equation:
$$
\left\{\begin{array}{l}
\ds -\partial_{t}V^{T}(t,x,v) -\langle D_{x}V^{T}(t,x,v), v \rangle +\frac{1}{2}|D_{v}V^{T}(t,x,v)|^{2}=F(x,v), \quad {\rm in}\; [0,T] \times \T^{d} \times \R^{d}, \\
\ds V^T(T,x,v)= 0 	\quad {\rm in}\;  \T^{d} \times \R^{d}. 
\end{array}\right.
$$
Our aim is to characterize the behavior of $V^T(0, \cdot, \cdot)$ as $T\to +\infty$. To state the result, we need the notion of closed measure, which requires another notation: we set
\begin{align*}
\PP_{\alpha,2}(\T^{d} \times \R^{d} \times \R^{d}) =  \left\{\mu \in \PP(\T^{d} \times \R^{d} \times \R^{d}): \int_{\T^{d} \times \R^{d} \times \R^{d}}{\left(|w|^{2}+|v|^{\alpha} \right)\ \mu(dx,dv,dw)} < + \infty \right\}
\end{align*}
endowed with the weak-$^{*}$ convergence. 
	\begin{definition}[{\bf Closed measure}]\label{def:closedmeasure}
		Let $\eta \in \PP_{\alpha,2}(\T^{d} \times \R^{d} \times \R^{d})$. We say that $\eta$ is a closed measure if for any test function $\varphi \in C^{\infty}_{c}(\T^{d} \times \R^{d})$ the following holds
		\begin{equation*}
			\int_{\T^{d} \times \R^{d} \times \R^{d}}{\Big(\langle D_{x}\varphi(x,v), v \rangle + \langle D_{v}\varphi(x,v), w \rangle \Big)\ \eta(dx,dv,dw)}=0.
		\end{equation*}
		We denote by $\C$ the set of closed measures. 
	\end{definition}

\begin{theorem}[{\bf Main result 1}]\label{theo:main1}
	Assume that $F$ satisfies assumptions {\bf (F1)} and {\bf (F2)}. Then, the following limits exist:
	\begin{equation*}
	\lim_{T \to +\infty} \frac{1}{T}V^{T}(0,x,v)=\lim_{T \to +\infty}  \inf_{\gamma \in \Gamma_{0}(x,v)}\frac{1}{T}J^{T}(\gamma)
	\end{equation*}
	and are independent of  $(x, v) \in \T^{d} \times \R^{d}$. 
	Moreover, if $F$ satisfies also {\bf (F3)} then 
	\begin{equation*}
\lim_{T \to \infty} \frac{1}{T}V^{T}(0,x,v) = \inf_{\mu \in \mathcal{C}} \int_{\T^{d} \times \R^{d} \times \R^{d}}{\left(\frac{1}{2}|w|^{2}+F(x,v) \right)\ \mu(dx,dv,dw)}.
\end{equation*}
\end{theorem}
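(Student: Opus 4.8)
The plan is to split the argument into three parts: (i) existence of the limit of $\tfrac1T V^T(0,x,v)$ and its independence of $(x,v)$; (ii) the lower bound $\liminf_T \tfrac1T V^T(0,x,v)\ge \inf_{\mu\in\C}\int L\,d\mu$; and (iii) the matching upper bound. For part (i), I would follow the subadditivity/ergodic-type argument of \cite{bib:ARG}. The key observation is that although the problem is not small-time controllable, a curve can always be steered from $(x_0,v_0)$ to $(x_1,v_1)$ in some fixed time with a cost controlled by the data (e.g. by first braking $v_0$ to $0$, translating, then accelerating to $v_1$); here the periodicity in $x$ is essential so that the spatial displacement is bounded on $\T^d$. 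This gives an almost-subadditivity relation $V^{T+S}(0,x,v)\le V^{T}(0,x,v)+V^{S}(0,x',v')+C$ up to a bounded error independent of $T,S$, and a uniform Lipschitz/continuity estimate in $(x,v)$ of $\tfrac1T V^T$ coming from {\bf (F2)}. Fekete's lemma then yields the existence of $\bar c:=\lim_T \tfrac1T V^T(0,x,v)$, and the controllability estimate shows $\bar c$ does not depend on $(x,v)$.

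For the lower bound in part (ii), given an optimal (or near-optimal) curve $\gamma_T\in\Gamma_0(x,v)$ for $J^{0,T}$, I would introduce the occupation measure
\begin{equation*}
\mu_T:=\frac1T\int_0^T \delta_{(\gamma_T(s),\dot\gamma_T(s),\ddot\gamma_T(s))}\,ds \in \PP(\T^d\times\R^d\times\R^d).
\end{equation*}
The coercivity {\bf (F2)} together with the bound $J^{0,T}(\gamma_T)\le CT$ (from part (i)) gives uniform bounds on $\int(|w|^2+|v|^\alpha)\,\mu_T(dx,dv,dw)$, hence tightness in $\PP_{\alpha,2}$, so up to a subsequence $\mu_T\stackrel{w^*}{\to}\mu_\infty$. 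Testing the ODE $\tfrac{d}{ds}\varphi(\gamma_T,\dot\gamma_T)=\langle D_x\varphi,\dot\gamma_T\rangle+\langle D_v\varphi,\ddot\gamma_T\rangle$ against $\varphi\in C_c^\infty(\T^d\times\R^d)$ and integrating over $[0,T]$ produces a boundary term of order $1/T$, so $\mu_\infty\in\C$; and lower semicontinuity of $\mu\mapsto\int L\,d\mu$ (convexity of $L$ in $w$, continuity of $F$) gives $\bar c=\lim_T\tfrac1T J^{0,T}(\gamma_T)\ge\int L\,d\mu_\infty\ge\inf_{\mu\in\C}\int L\,d\mu$. Some care is needed to pass the moment bounds through the weak-$*$ limit (Fatou for the coercive part), which is where {\bf (F3)} may be used to justify a smooth approximation of $F$.

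For the upper bound in part (iii), I would start from an arbitrary $\mu\in\C$ with finite energy and construct a competitor curve whose normalized cost is close to $\int L\,d\mu$. The natural route is a \emph{closed-measure $\Rightarrow$ approximate-invariant-measure $\Rightarrow$ curve} chain: regularize $\mu$ by convolution to get a smooth closed measure $\mu^\eps$ (this is where {\bf (F3)} enters, to control the change in $\int F\,d\mu^\eps$), use the closedness relation to realize $\mu^\eps$ (or its $(x,v)$-marginal with the disintegrated drift $b(x,v)=\int w\,\mu^\eps_{x,v}(dw)$) as an invariant measure of the flow $\dot x=v$, $\dot v=b(x,v)$, and then apply an ergodic/Krylov--Bogolyubov argument along a trajectory of this flow to build $\gamma_T$ with $\tfrac1T J^{0,T}(\gamma_T)\to\int L\,d\mu^\eps$; finally prepend a short bridge from $(x,v)$ to the trajectory using the controllability estimate from part (i), at bounded cost. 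Letting $T\to\infty$, then $\eps\to0$, then taking the infimum over $\mu$ gives $\bar c\le\inf_{\mu\in\C}\int L\,d\mu$.

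The main obstacle is part (iii): turning an abstract closed measure into an actual $H^2$ curve with asymptotically the right cost, since closed measures need not be invariant measures of a well-posed flow (the disintegrated drift may be irregular, and $\ddot\gamma$ must be genuinely $L^2$). The safe workaround, which I expect to be the technical heart of the proof, is to discretize: approximate $\mu$ weakly by finitely-supported closed measures, represent each as a convex combination of occupation measures of \emph{periodic} $C^1$ (indeed piecewise-$H^2$) curves on $\T^d$ whose closedness forces the loop condition $\int_0^{T_i}\dot\gamma_i=0 \bmod \Z^d$ and $\int_0^{T_i}\ddot\gamma_i=0$, concatenate these loops with short controllability bridges in the right proportions, and estimate the resulting time-average cost. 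This combinatorial concatenation, together with checking that the moment and closedness errors vanish in the limit, is the delicate step; everything else is soft compactness and lower-semicontinuity.
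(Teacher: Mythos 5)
Your parts (i) and (ii) are essentially the paper's route (occupation measures along minimizers, tightness from \textbf{(F2)}, closedness of the weak-$*$ limit, lower semicontinuity), with one caveat on (i): the claimed almost-subadditivity ``$V^{T+S}\le V^T+V^S+C$ with a bounded error'' is not available as stated, because the terminal velocity of a $T$-optimal trajectory need not be bounded, so the bridge cost at time $T$ is not uniformly controlled. The paper handles exactly this point by cutting the optimal curve at a time where $|\dot\gamma|\le \lambda R_0$ and accepting an error $C_3(\lambda^2R_0^2+R_0^\alpha\lambda^{-\alpha}T)$ which is only $o(T)$ after letting $\lambda\to\infty$ (Lemma \ref{lem.jkhzrnedg}); it then concludes not by Fekete but by using the periodized modified curve as a competitor for all horizons (Proposition \ref{prop:existslimit}). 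Your scheme can be repaired along the same lines, but the repair is the actual content of this step.

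The genuine gap is in part (iii), the inequality $\lim_T V^T/T\le\inf_{\mu\in\C}\int L\,d\mu$. Your ``safe workaround'' --- approximate $\mu$ weakly by finitely-supported closed measures and realize them by periodic loops --- cannot work: if $\mu=\sum_i\theta_i\delta_{(x_i,v_i,w_i)}$ is closed, then localizing the test function near a single atom $(x_i,v_i)$ forces $\langle D_x\varphi(x_i,v_i),v_i\rangle+\langle D_v\varphi(x_i,v_i),\bar w_i\rangle=0$ for all $\varphi\in C^\infty_c(\T^d\times\R^d)$, hence $v_i=0$ and $\bar w_i=0$; so finitely-supported closed measures are concentrated on $\{v=0\}$ and cannot approximate even the simple closed measure $dx\otimes\delta_{v_0}\otimes\delta_0$ with $v_0\neq0$. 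Your primary route (disintegrated drift $b(x,v)$, invariance, Krylov--Bogolyubov/Birkhoff selection, bridging) is not carried out either: the drift is only $L^2(\mu)$-integrable, the associated flow need not be well posed, and turning a stationary measure-valued solution of the kinetic continuity equation into a single $H^2_{loc}$ curve with the right time-averaged cost is precisely the hard step, which you leave open. The paper avoids trajectory constructions altogether and proves the equivalent inequality $\inf_{\mu\in\C}\int L\,d\mu\ge\lambda$ by duality: a min--max formula (Theorem \ref{thm:minmaxform}) reduces it to exhibiting near-optimal smooth, compactly supported (or globally Lipschitz, cf.\ Remark \ref{rem:lipschitzclosed}) subsolutions, and these are produced from the discounted value function $V_\delta$ via the Tauberian theorem \eqref{eq:tauberian}, the estimates of Proposition \ref{prop.reguVdelta}, Proposition \ref{prop:Inflemma}, and the truncation/mollification arguments of Lemmas \ref{lem:approx1}, \ref{lem.Rapprox} and \ref{lem.Rapprox2}. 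As it stands, your proposal does not establish the upper bound, so the characterization of the limit is not proved.
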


\begin{remarks}{\rm  \begin{enumerate}
\item If we denote by $\bar \lambda$ the above limits, the convergence of $V^T(0,x,v)-\lambda T$ is a completely open problem  in this context. This is related to the lack of solution of the ergodic HJ equation. 
\item The (strong) structure condition on $L$ and the fact that the problem is periodic in the $x$ variable can probably be  relaxed: this would require however more refined and technical estimates and we have chosen to work in this simpler framework. 
\end{enumerate}}\end{remarks}

	\subsection{Mean Field Games of acceleration}
	
	In our second main result, we consider a mean field game problem of acceleration. The Lagrangian function $L: \T^{d} \times \R^{d} \times \R^{d} \times \PP_{1}(\T^{d} \times \R^{d}) \to \R$ now takes the form
	\begin{equation*}
	L(x,v,w,m)=	\frac{1}{2}|w|^{2}+F(x,v,m)
	\end{equation*}
where $F: \T^{d} \times \R^{d} \times \PP_{1}(\T^{d} \times \R^{d}) \to \R$ satisfies the following assumptions:
\begin{itemize}
	\item[({\bf F1'})] $F$ is globally continuous with respect to all the variables;
	\item[({\bf F2'})] there exists $\alpha > 1$ and a constant $c_{F} \geq 1$ such that for any $(x,v,m) \in \R^{d} \times \R^{d} \times \PP_{1}(\T^{d} \times \R^{d} \times \R^{d})$
$$
	\frac{1}{c_{F}}|v|^{\alpha}-c_{F} \leq F(x,v,m) \leq c_{F}(1+|v|^{\alpha})
$$
	and, without loss of generality, we assume $F(x,v,m) \geq 0$ for any $(x,v,m) \in \T^{d} \times \R^{d} \times \PP_{1}(\T^{d} \times \R^{d} \times \R^{d})$;
{	\item[({\bf F3'})] there exists a constant $C_{F} \geq 0$ such that,  for any $(x,v,m) \in \R^{d} \times \R^{d} \times \PP_{1}(\T^{d} \times \R^{d} \times \R^{d})$, 
$$
	|D_{x}F(x,v,m)| + |D_{v}F(x,v,m)| \leq C_{F}(1+|v|^{\alpha}).
$$}
	\end{itemize}
We consider the time-dependent MFG system 
\begin{align}\label{eq:accMFG}
	\begin{split}
	\begin{cases}
	-\partial_{t}u^{T}(t,x,v)-\langle D_{x}u^{T}(t,x,v),v \rangle  + \frac{1}{2}|D_{v}u^{T}(t,x,v)|^{2} 
		=F(x,v,m^{T}_{t}), &   {\rm in }\;  [0,T] \times \T^{d} \times \R^{d}
	\\
	\partial_{t}m^{T}_{t}-\langle v, D_{x}m^{T}_{t} \rangle- \ddiv\Big(m^{T}_{t}D_{v}u^{T}(t,x,v) \Big)=0, 	 &  {\rm in }\;  [0,T] \times \T^{d} \times \R^{d}
	\\
	 u^{T}(T,x,v)=g(x,v,m^{T}_{T}), \; {\rm in} \; \T^{d} \times \R^{d},  \quad m^{T}_{0}=m_{0} \in \PP(\T^{d} \times \R^{d}).
	\end{cases}
	\end{split}
\end{align}
where the terminal condition of the Hamilton-Jacobi equation satisfies the following:
\begin{itemize}
\item[{\bf (G1)} ] $(x,v) \mapsto g(x,v,m)$ belongs to $C^{1}_{b}(\T^{d} \times \R^{d})$ for any $m \in \PP(\T^{d} \times \R^{d})$ and $m \mapsto g(x,v,m)$ is Lipschitz continuous with respect to the $d_{1}$ distance, uniformly in $(x,v) \in \T^{d} \times \R^{d}$.	
\end{itemize}

We recall that $(u^{T}, m^{T})$ is a solution of \eqref{eq:accMFG} if $u^{T}$ is a viscosity solution of the first equation and $m^{T}$ is a solution in the sense of distributions of the second equation. For more details see \cite{bib:CM, bib:YA}.

Our aim is to understand the averaged limit of $u^T$ as $T\to+\infty$. For this we define the ergodic MFG problem, inspired  by the characterization of the limit in Theorem \ref{theo:main1}. Let us recall that the notion of closed measure was introduced in Definition \ref{def:closedmeasure} and that $\C$ denotes the set of closed measures.

\begin{definition}[{\bf Solution of the ergodic MFG problem}]\label{def.ergoMFGeq}
We say that ${ (\bar\lambda, \bar\mu) \in \R \times \C}$ is a solution of the ergodic MFG problem if
\begin{align}
\bar\lambda =\ & \inf_{\mu \in \C} \int_{\T^{d} \times \R^{d} \times \R^{d}}{\left(\frac{1}{2}|w|^{2} +F(x,v, \pi \sharp \bar\mu)\right)\ \mu(dx,dv,dw)}	\notag
\\
=\ & \int_{\T^{d} \times \R^{d} \times \R^{d}}{\left(\frac{1}{2}|w|^{2} +F(x,v, \pi \sharp \bar{\mu})\right)\ \bar{\mu}(dx,dv,dw)}.\label{eq:ergodic}
\end{align}
\end{definition}

\begin{theorem}[{\bf Main result 2}]\label{thm:theo.main2}
Assume that $F$ and $G$ satisfy {\bf (F1')}, {\bf (F2')} and  {\bf (G1)}. 
\begin{enumerate}
\item 	There exists at least one solution ${ (\bar\lambda, \bar\mu) \in \R \times \C}$ of the ergodic MFG problem~\eqref{eq:ergodic}. 
Moreover, if $F$ satisfies the following  monotonicity assumption:  there exists $M_F>0$ such that for $m_{1}$, $m_{2} \in \PP(\T^{d} \times \R^{d})$
\begin{equation}\label{monotonicity}
\begin{array}{l}
\ds  \int_{\T^{d} \times \R^{d}}{\big(F(x,v,m_{1})-F(x,v,m_{2}) \big) \ (m_{1}(dx,dv)-m_{2}(dx,dv))} 
	\\
\ds \qquad \geq\  M_{F}\int_{\T^{d} \times \R^{d}}{\big(F(x,v,m_{1})-F(x,v,m_{2}) \big)^{2}\ dxdv},
\end{array}
\end{equation}
then the ergodic constant is unique: If $(\bar\lambda_{1}, \bar\mu_{1})$ and $(\bar\lambda_{2}, \bar\mu_{2})$ are two solutions of the ergodic MFG problem, then $\bar\lambda_{1}=\bar\lambda_{2}$.
\item Assume in addition that $\alpha=2$, that {\bf (F3')} and \eqref{monotonicity} hold and that the initial distribution $m_{0}$ is in $\mathcal P_2(\T^d\times \R^d)$. 
Let $(u^{T}, m^{T})$ be a solution of the MFG system \cref{eq:accMFG} and let $(\bar\lambda, \bar\mu)$ be a solution of the ergodic MFG problem \cref{eq:ergodic}. Then $T^{-1}u^T(0, \cdot, \cdot)$ converges locally uniformly to $\bar \lambda$ and we have
	\begin{equation*}
	\lim_{T \to +\infty} \frac{1}{T} \int_{\T^{d} \times \R^{d}}{u^{T}(0,x,v)\ m_{0}(dx,dv)}= \bar\lambda.
	\end{equation*}
\end{enumerate}
\end{theorem}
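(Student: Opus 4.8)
The plan is to treat the two parts separately, leaning heavily on the closed-measure characterization of Theorem \ref{theo:main1}.

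For part (1), existence of a solution $(\bar\lambda,\bar\mu)$ should follow from a fixed-point argument. For each $m\in\PP_1(\T^d\times\R^d)$, the map $F(\cdot,\cdot,m)$ satisfies {\bf (F1)} and {\bf (F2)} uniformly in $m$ by {\bf (F1')}--{\bf (F2')}, so Theorem \ref{theo:main1} gives an ergodic constant $\bar\lambda(m)=\inf_{\mu\in\C}\int(\tfrac12|w|^2+F(x,v,m))\,\mu$; I would first establish a priori bounds, using {\bf (F2')}, showing that any minimizing $\mu$ has $\int(|w|^2+|v|^\alpha)\,\mu$ bounded by a constant independent of $m$, so the relevant competitors live in a fixed weak-$^*$ compact subset of $\PP_{\alpha,2}$. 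Then I would define the set-valued map $m\mapsto \{\pi\sharp\mu : \mu\in\C \text{ attains } \bar\lambda(m)\}$ and check it has closed graph and nonempty convex values (convexity because $\C$ is convex and the functional is affine in $\mu$, so the set of minimizers is convex), so Kakutani's theorem applies on the (compact, convex) set of admissible $\pi\sharp\mu$. The main point to verify is continuity of $m\mapsto F(\cdot,\cdot,m)$ in a topology strong enough to pass to the limit in the integral against weakly-$^*$ converging $\mu$; here {\bf (F1')} together with the uniform moment bounds should suffice. For the uniqueness of $\bar\lambda$ under \eqref{monotonicity}: given two solutions $(\bar\lambda_1,\bar\mu_1)$, $(\bar\lambda_2,\bar\mu_2)$, I would use that $\bar\mu_i$ is optimal for $F(\cdot,\cdot,\pi\sharp\bar\mu_i)$ while being merely a competitor for $F(\cdot,\cdot,\pi\sharp\bar\mu_j)$, write the four inequalities $\bar\lambda_i\le\int(\tfrac12|w|^2+F(x,v,\pi\sharp\bar\mu_j))\,\bar\mu_i$, and add them up in the standard Lasry--Lions cross-difference pattern; this yields $0\ge M_F\int(F(x,v,\pi\sharp\bar\mu_1)-F(x,v,\pi\sharp\bar\mu_2))^2\,dx\,dv$, hence $F(\cdot,\cdot,\pi\sharp\bar\mu_1)=F(\cdot,\cdot,\pi\sharp\bar\mu_2)$ a.e., and plugging back gives $\bar\lambda_1=\bar\lambda_2$.

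For part (2), the strategy is to transfer the time-dependent MFG system into the language of closed measures via \Cref{thm:lintegral} (the occupation-measure reformulation) and then take long-time averages. Given a solution $(u^T,m^T)$, the optimal trajectories of the control problem defining $u^T$ induce, together with $(m^T_t)$, a family of time-dependent measures on $\T^d\times\R^d\times\R^d$; averaging in time, $\mu^T:=\tfrac1T\int_0^T (\text{law at time } t)\,dt$ should be shown to lie (up to a small error as $T\to\infty$) in $\C$, using that the continuity equation and the test-function identity for closed measures match after integrating by parts in $t$ and using that the boundary terms at $0$ and $T$ are $O(1/T)$ once one has the $L^\infty$ in time, $L^2$-type bounds on the velocity/acceleration — this is where $\alpha=2$, {\bf (F3')} and $m_0\in\mathcal P_2$ are used to get the needed uniform-in-$T$ estimates on $\int|v|^2 m^T_t$ and on the energy $\int_0^T\int|D_v u^T|^2 m^T_t$. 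One then shows $\liminf_T \tfrac1T\int u^T(0,\cdot)\,m_0 \ge \bar\lambda$ by testing against the optimal $\bar\mu$ of the ergodic problem (a competitor in the time-dependent problem after a suitable disintegration/approximation argument), and $\limsup_T \tfrac1T\int u^T(0,\cdot)\,m_0 \le \bar\lambda$ by using the averaged occupation measure $\mu^T$ as a competitor in the ergodic infimum, together with the fact that, by monotonicity, $F(\cdot,\cdot,m^T_t)$ is forced to be close to $F(\cdot,\cdot,\pi\sharp\bar\mu)$ in $L^2$ — the same cross-difference computation as in part (1), now carried out between the time-dependent and the ergodic objects, is what upgrades the two one-sided bounds to a genuine limit and simultaneously identifies it with $\bar\lambda$. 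Local uniform convergence of $T^{-1}u^T(0,\cdot,\cdot)$ then follows from the $(x,v)$-independence of the limit in Theorem \ref{theo:main1} applied with frozen $m=\pi\sharp\bar\mu$, plus equi-Lipschitz-type bounds on $T^{-1}u^T$.

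The main obstacle, I expect, is part (2): specifically, showing that the time-averaged occupation measures $\mu^T$ are asymptotically closed and have uniformly bounded second moments in $v$ and in $w$, which requires delicate uniform-in-$T$ a priori estimates on the MFG system \eqref{eq:accMFG} (control of $\int_0^T\!\!\int |D_v u^T|^2\,m^T_t\,dt$ and of $\sup_t\int|v|^2 m^T_t(dx,dv)$) — the restriction to $\alpha=2$ and the need for {\bf (F3')} strongly suggest these energy estimates are exactly the bottleneck. The coupling of the two one-sided inequalities through the monotonicity condition is the second delicate point, since it must be done at the level of measures rather than of classical solutions of the ergodic HJ equation, which does not exist here.
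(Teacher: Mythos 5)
Part (1) of your proposal is essentially the paper's argument (Proposition \ref{prop.exuniErgoMFGeq}): Kakutani's theorem applied to the set of closed measures with uniformly bounded moments, and the Lasry--Lions cross-difference with \eqref{monotonicity} for uniqueness of $\bar\lambda$; that part is fine. The gaps are in part (2). First, a directional slip: using $\bar\mu$ (suitably concatenated after a linking measure) as a competitor in the $T$-closed problem of \Cref{thm:lintegral} bounds $\frac1T\int u^T(0,\cdot)\,dm_0$ from \emph{above}, while using a time-average built from the optimal $T$-closed measure as a competitor in the ergodic infimum bounds $\bar\lambda$ from above, i.e.\ it is what yields $\liminf_T\frac1T\int u^T(0,\cdot)\,dm_0\ge\bar\lambda$ — you have attributed each one-sided bound to the wrong competitor. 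More substantively, the competitor fed to the ergodic infimum must be \emph{exactly} in $\C$ at finite $T$: the raw time average of $\mu^T_t$ has a boundary defect (only $O(1/T)$ against each fixed test function), and since the ergodic infimum ranges over $\C$ only, ``asymptotically closed'' does not suffice for the finite-$T$ cross-difference computation. The paper resolves this by modifying the optimal trajectories so that they return to their initial state at time $T$ (Lemma \ref{lem.jkhzrnedg}, Remark \ref{rem.jkhzrnedg}), at cost $C(\lambda^2+\lambda^{-2}T)$ with the choice $\lambda=T^{1/4}$; this is exactly what turns the cross-testing plus \eqref{monotonicity} into the quantitative energy estimate $\int_0^T\int (F(x,v,m^T_t)-F(x,v,\bar m))^2\,dx\,dv\,dt\le CT^{1/2}$ (Proposition \ref{prop:energyestimate}), upgraded through the interpolation Lemma \ref{lem.interpol}. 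Your sketch only asserts a qualitative ``closeness in $L^2$''; a rate sublinear in $T$ is genuinely needed downstream.

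Second, the locally uniform convergence of $T^{-1}u^T(0,\cdot,\cdot)$ does not follow from Theorem \ref{theo:main1} ``with frozen $m=\pi\sharp\bar\mu$'': $u^T$ is the value function of a problem whose running cost is $F(\cdot,\cdot,m^T_t)$, not $F(\cdot,\cdot,\bar m)$, and no equi-Lipschitz bound uniform in $T$ is available. The paper instead proves the oscillation bound $|u^T(0,x,v)-u^T(0,x',v')|\le C R^2 T^{\frac{4d+3}{4(d+1)}}$ on $\T^d\times B_R$ (Lemma \ref{lem.unifcontuT}); shifting an optimal trajectory in time is only possible there because the weighted sup-norm form of the energy estimate controls $F(\cdot,\cdot,m^T_t)-F(\cdot,\cdot,\bar m)$ along the trajectory. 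Combining this $o(T)$ oscillation with the convergence of the $m_0$-average and the $\mathcal P_2$ tail of $m_0$ gives the locally uniform limit. So the missing ingredient in your proposal is precisely this quantitative comparison of $F(\cdot,\cdot,m^T_t)$ with $F(\cdot,\cdot,\bar m)$, needed twice (averaged limit and oscillation bound); the bottleneck is not the bounds on $\int_0^T\int|D_vu^T|^2\,m^T_t$ and $\sup_t\int|v|^2\,m^T_t$ that you single out.
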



\section{Ergodic behavior of control of acceleration}\label{sec:ErgodicBehavior}
\subsection{Existence of the limit}

Before proving the main result of this section, \Cref{prop:existslimit}, we need a few preliminary lemmas.

\begin{lemma}\label{lem:upperbo}
Assume that  $F$ satisfies {\bf (F1)} and {\bf (F2)}. Then, for any $(x,v) \in \T^{d} \times B_{R}$, with $R \geq 0$,  and for any $T > 0$, we have
\begin{equation*}
\frac{1}{T}V^{T}(0,x,v) \leq c_{F}(1+R^{\alpha}).	
\end{equation*}
\end{lemma}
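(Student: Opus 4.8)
The plan is to estimate the infimum defining $V^{T}(0,x,v)$ from above by testing the functional $J^{0,T}$ against one explicit, cheap competitor, exploiting the fact that the running cost charges the acceleration quadratically: a curve that never accelerates costs only the potential $F$ evaluated along it. Given $(x,v)\in\T^{d}\times B_{R}$, I would take $\gamma\in\Gamma$ to be the constant-velocity curve $\gamma(s)=x+sv$ (understood modulo $\Z^{d}$) for $s\in[0,+\infty)$. This is a $C^{\infty}$ curve on $\T^{d}$, in particular it lies in $H^{2}(0,T;\T^{d})$ for every $T>0$, and it satisfies $\gamma(0)=x$, $\dot\gamma(0)=v$; hence $\gamma\in\Gamma_{0}(x,v)$ and it is an admissible competitor in \cref{eq:value}.

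Since $\dot\gamma(s)\equiv v$ and $\ddot\gamma(s)\equiv 0$, the functional collapses to
$$
J^{0,T}(\gamma)=\int_{0}^{T}\Big(\tfrac12|\ddot\gamma(s)|^{2}+F(\gamma(s),\dot\gamma(s))\Big)\,ds=\int_{0}^{T}F(\gamma(s),v)\,ds .
$$
Using the upper bound in {\bf (F2)} together with $|v|\leq R$, one has $F(\gamma(s),v)\leq c_{F}(1+|v|^{\alpha})\leq c_{F}(1+R^{\alpha})$ for every $s\in[0,T]$, so that $J^{0,T}(\gamma)\leq c_{F}(1+R^{\alpha})\,T$. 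Passing to the infimum over $\Gamma_{0}(x,v)$ gives $V^{T}(0,x,v)\leq c_{F}(1+R^{\alpha})\,T$, and dividing by $T$ yields the asserted inequality.

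There is essentially no real obstacle here: the only two things to verify are that the constant-velocity curve is an admissible competitor — which holds because it is smooth (hence in $H^{2}(0,T;\T^{d})$ on every bounded interval) and matches the prescribed initial position and velocity — and that assumption {\bf (F2)} is in force, which it is by hypothesis. The estimate is uniform in $(x,v)\in\T^{d}\times B_{R}$ and in $T>0$ because the competitor and the bound on $F$ depend on $(x,v)$ only through $|v|\leq R$.
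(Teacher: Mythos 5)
Your proof is correct and is essentially identical to the paper's argument: both use the zero-acceleration competitor $\gamma(s)=x+sv$ and bound the resulting cost $\int_0^T F(x+sv,v)\,ds$ via the upper bound in {\bf (F2)}. Nothing further is needed.
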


{
\begin{remarks} The result also holds when $F=F(t,x,v)$ depends also on time, provided that $F$ is continuous and satisfies {\bf (F2)} with a constant $c_F$ independent of $t$. 
\end{remarks}
}

\begin{proof}
Define the curve
$\xi(t)=x + tv,$ for $t \in [0,T].$
Then, by definition of the value function  $V^T$, we have 
\begin{equation*}
V^{T}(0,x,v) \leq J^{T}(\xi) = \int_{0}^{T}{F(x+tv, v)\ dt}	\leq Tc_{F}(1+R^{\alpha}).
\end{equation*}
\end{proof}

 { 
	\begin{lemma}\label{lem:reachablecurve}
	Assume that $F$ satisfies {\bf (F1)} and {\bf (F2)}. Let $\theta\geq 1$,  $(x_{0}, v_{0})$ and $(x,v)$ be in $\T^{d} \times B_{R}$ for some $R\geq 1$. Then, there exists a constant $C_{2} \geq 0$ (depending only the constants $\alpha$ and $c_F$ in {\bf (F2)}) and a curve $\sigma: [0, \theta] \to \R^{d}$ such that $\sigma(0)=x_{0}$, $\dot\sigma(0)=v_{0}$ and $\sigma(\theta)=x$, $\dot\sigma(\theta)=v$ and 
	\begin{align}\label{eq:boundsigma}
		J^{\theta}(\sigma)\leq C_{2}(R^2\theta^{-1}+ R^\alpha \theta).
	\end{align}
	\end{lemma}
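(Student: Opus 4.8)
The plan is to construct an explicit connecting curve by concatenating two simple pieces — a ballistic free-flight segment and a polynomial correction segment — and then estimate the running cost $J^\theta$ directly. First I would reduce to the case $\theta = 1$ by scaling: if $\sigma$ connects $(x_0,v_0)$ to $(x,v)$ on $[0,1]$, then $\tilde\sigma(s) := \sigma(s/\theta)$, reparametrized, connects the rescaled data on $[0,\theta]$; one checks how $\int |\ddot\sigma|^2$ and $\int F(\sigma,\dot\sigma)$ transform under $s \mapsto s/\theta$. Here the kinetic term picks up a factor $\theta^{-3}$ from the two time-derivatives times $\theta$ from the measure (giving $\theta^{-2}$ relative to unit time, hence the $R^2\theta^{-1}$ shape once we note the velocity data also rescales), while the potential term, controlled by {\bf (F2)} via $F \le c_F(1+|v|^\alpha)$ and $|v| \lesssim R$, contributes $O(R^\alpha \theta)$. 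The bookkeeping of how $v_0, v$ must be adjusted under the reparametrization (they scale like $\theta^{-1}$ times the original velocities measured in the new time) is the slightly fiddly point, so I would instead work directly on $[0,\theta]$ without rescaling.

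On $[0,\theta]$ directly: write $\sigma(s) = P(s)$ where $P$ is the unique cubic (Hermite interpolation) polynomial with $P(0)=x_0$, $P'(0)=v_0$, $P(\theta)=x$, $P'(\theta)=v$. Since we work in $\T^d$, lift $x, x_0$ to representatives in $\R^d$ with $|x - x_0| \le \sqrt d$ (a dimensional constant absorbed into $C_2$). Then $P'' $ is affine in $s$ and one computes $P''$ explicitly in terms of $x-x_0$, $v_0$, $v$, $\theta$: schematically $|P''(s)| \lesssim \theta^{-2}|x-x_0| + \theta^{-1}(|v_0|+|v|) \lesssim \theta^{-2} + \theta^{-1}R$ using $R \ge 1$. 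Squaring and integrating over $[0,\theta]$ gives
\begin{equation*}
\int_0^\theta |P''(s)|^2\, ds \ \lesssim\ \theta\big(\theta^{-4} + \theta^{-2}R^2\big) \ \lesssim\ R^2\theta^{-1},
\end{equation*}
where I used $\theta \ge 1$ and $R \ge 1$ to absorb $\theta^{-3}$ into $R^2\theta^{-1}$. For the velocity bound needed in the potential term, $|P'(s)| \lesssim |v_0| + |v| + \theta^{-1}|x-x_0| \lesssim R$ on $[0,\theta]$ (again $\theta,R\ge1$), so by {\bf (F2)}, $F(P(s),P'(s)) \le c_F(1+|P'(s)|^\alpha) \lesssim R^\alpha$, and integrating gives $\int_0^\theta F \lesssim R^\alpha\theta$. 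Adding the two contributions yields \cref{eq:boundsigma} with $C_2$ depending only on $\alpha$, $c_F$ (and the dimension, which is fixed).

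The main obstacle — really the only nontrivial point — is making sure all the lower-order terms in $\theta$ genuinely collapse into the two advertised terms $R^2\theta^{-1}$ and $R^\alpha\theta$ uniformly, i.e.\ that no hidden term like $R^2\theta^{-3}$ or a cross term $R\cdot 1 \cdot \theta^{-1}$ escapes the bound; this is exactly where the hypotheses $\theta \ge 1$ and $R \ge 1$ are used, and one must be careful that $C_2$ does not secretly depend on $R$ or $\theta$. A secondary point is the periodic lift: one must pick the $\R^d$-representative of the displacement $x - x_0$ of minimal norm so that $|x-x_0|$ is bounded by a constant and does not contribute an unbounded term; since $x_0, x$ project into $B_R$ this is automatic once we note the torus has diameter $\sqrt d$. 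I expect the write-up to be a half-page of elementary calculus once the cubic interpolant is written down.
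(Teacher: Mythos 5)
Your proposal is correct and follows essentially the same route as the paper: the paper also takes the explicit cubic Hermite interpolant $\sigma(t)=x_0+v_0t+Bt^2+Ct^3$ on $[0,\theta]$, bounds $|\ddot\sigma|$ and $|\dot\sigma|$ in terms of $R$ and $\theta$, and applies {\bf (F2)} to the potential term, absorbing the lower-order terms using $\theta\ge 1$, $R\ge 1$ and the bounded torus displacement. Your write-up just makes the constant-tracking (and the periodic lift of $x-x_0$) more explicit than the paper does.
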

{
\begin{remarks} The result also holds when $F=F(t,x,v)$ depends also on time, provided that $F$ is continuous and satisfies {\bf (F2)} with a constant $c_F$ independent of $t$. 
\end{remarks}
}	
\begin{proof}
	Define the following parametric curve
	\begin{equation*}
	\sigma(t)=x_{0}+v_{0}t+Bt^{2}+Ct^{3}, \quad t \in [0,\theta].
	\end{equation*}
Choosing 
\begin{align*}
\begin{cases}
	& B=3 (x-x_{0})-\theta v -2\theta v_{0})\theta^{-2}
	\\
	& C=(-2(x-x_0)+\theta(v+v_0))\theta^{-3},
\end{cases}	
\end{align*}
we have that $\sigma(0)=x_{0}$, $\dot\sigma(0)=v_{0}$ and $\sigma(1)=x$, $\dot\sigma(1)=v$. 

By definition of the functional $J^{\theta}$ we get
\begin{align*}
J^{\theta}(\sigma) = & \int_{0}^{\theta}{\left(\frac{1}{2}|\ddot\sigma(t)|^{2}+F(\sigma(t), \dot\sigma(t)) \right)\ dt}
\\
\leq & { \int_{0}^{\theta} \left(\frac12 | 2B+6Ct|^2 + c_F(1+ |v_0+ 2tB+3t^2C|^\alpha)\right)dt } \; \leq \; C_2(R^2\theta^{-1}+R^\alpha\theta),
\end{align*}
for some constant $C_2$ depending on the constants $\alpha$ and $c_F$ in {\bf (F2)} only. 
\end{proof}

\begin{lemma}\label{lem.jkhzrnedg} Let $T\geq 2$ and $(x,v)\in \T^d\times B_{R_0}$ for some $R_0\geq c_F^{\frac{2}{\alpha}}$. Let $\gamma\in \Gamma(x,v)$ be optimal for $V^T(0,x,v)$. Then for any $\lambda \geq 2$ there exists $\tilde \gamma\in \Gamma(x,v)$ with $\tilde \gamma(T)=x$, $\dot{\tilde \gamma}(T)=v$ and 
$$
J^T(\tilde \gamma) \leq J^T(\gamma)+ C_3(\lambda^2 R_0^2 + R_0^{\alpha}\lambda^{-\alpha}T ) ,
$$
where the constant $C_3$ depends on $\alpha$ and $c_F$ only. 
\end{lemma}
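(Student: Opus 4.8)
The plan is to let $\tilde\gamma$ follow the optimal curve $\gamma$ on a long initial piece and then steer the trajectory back to $(x,v)$ over the remaining time by an explicit connecting curve furnished by \Cref{lem:reachablecurve}; the point is to perform the splicing at a time where the speed of $\gamma$ is controlled, and to arrange that the connecting curve spends almost all of its time at a very low speed. First I would record a priori bounds on $\gamma$: since $\gamma$ is optimal, \Cref{lem:upperbo} gives $J^T(\gamma)=V^T(0,x,v)\le c_FT(1+R_0^\alpha)$, and combining this with $F\ge 0$, with the lower bound in {\bf (F2)}, and with $R_0\ge c_F^{2/\alpha}$ yields
\[
\int_0^T|\ddot\gamma(s)|^2\,ds\ \le\ C\,R_0^\alpha T,\qquad \int_0^T|\dot\gamma(s)|^\alpha\,ds\ \le\ C\,R_0^\alpha T,
\]
with $C=C(\alpha,c_F)$.

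Next I would choose the splicing time by an averaging argument; assume first $\lambda<T$, the case $\lambda\ge T$ being easier and handled by taking $\tilde\gamma$ to be a single connecting curve from $(x,v)$ to itself provided by \Cref{lem:reachablecurve}. Partition $[0,T]$ into $N:=\lfloor T/\lambda\rfloor$ consecutive intervals, each of length between $\lambda$ and $2\lambda$. By the a priori bounds $\sum_j\int_{I_j}\big(|\ddot\gamma|^2+|\dot\gamma|^\alpha\big)\,ds\le C R_0^\alpha T$, so at least one interval $I^\ast$ satisfies $\int_{I^\ast}\big(|\ddot\gamma|^2+|\dot\gamma|^\alpha\big)\,ds\le C R_0^\alpha\lambda$. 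Splitting $I^\ast$ into three equal sub-intervals and using the mean value theorem, I pick a time $\tau$ in the last third of $I^\ast$ with $|\dot\gamma(\tau)|\le C'R_0$ for $C'=C'(\alpha,c_F)$; set $\theta:=T-\tau$, so $\theta\in[\lambda/3,\,T]$.

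On $[\tau,T]$ I then replace $\gamma$ by a curve $\sigma$ joining $(\gamma(\tau),\dot\gamma(\tau))$ to $(x,v)$ in time $\theta$, built in three phases: a deceleration phase of length of order $\lambda$ carrying the speed from $|\dot\gamma(\tau)|\lesssim R_0$ down to a speed of size $\sim R_0/\lambda$; a cruising phase of length $\theta-O(\lambda)$ at speed $\sim R_0/\lambda$; and a symmetric acceleration phase of length of order $\lambda$ raising the speed to $v$. The deceleration and acceleration phases are estimated through \eqref{eq:boundsigma} with $R\sim R_0$ and $\theta\sim\lambda$, contributing $O(\lambda^2R_0^2)$, while the cruising phase is estimated directly from the upper bound in {\bf (F2)}, contributing $O(R_0^\alpha\lambda^{-\alpha}T)$; altogether $J^{\tau,T}(\sigma)\le C_3\big(\lambda^2R_0^2+R_0^\alpha\lambda^{-\alpha}T\big)$. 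Setting $\tilde\gamma=\gamma$ on $[0,\tau]$ and $\tilde\gamma=\sigma$ on $[\tau,T]$, one gets $\tilde\gamma\in\Gamma(x,v)$ with $\tilde\gamma(T)=x$, $\dot{\tilde\gamma}(T)=v$, and, using that the integrand of $J$ is nonnegative,
\[
J^T(\tilde\gamma)=\int_0^{\tau}\Big(\tfrac12|\ddot\gamma|^2+F(\gamma,\dot\gamma)\Big)\,ds+J^{\tau,T}(\sigma)\ \le\ J^T(\gamma)+C_3\big(\lambda^2R_0^2+R_0^\alpha\lambda^{-\alpha}T\big).
\]

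The main obstacle is the construction and estimate of the connecting curve $\sigma$: one can afford neither to maintain the (possibly large) speed $\dot\gamma(\tau)$ over a time of order $\theta$, which would cost about $R_0^\alpha T$, nor — when $F$ is bounded below away from zero — to cruise over a time of order $T$ through a region where $F$ is not comparable to $|\dot\gamma|^\alpha$, which would again cost about $T$; the three-phase design is meant precisely to thread between these, and the delicate bookkeeping is to check that the cruising cost is $\lesssim R_0^\alpha\lambda^{-\alpha}T$ and the transition costs $\lesssim\lambda^2R_0^2$ uniformly in $\lambda\ge 2$, $T\ge 2$ and $R_0\ge c_F^{2/\alpha}$ (with an ad hoc treatment of the borderline ranges of $\lambda$).
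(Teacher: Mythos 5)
Your construction has a genuine gap at the point you yourself flag as delicate: the cost of the cruising phase. Your pigeonhole argument selects the good interval $I^\ast$ anywhere in $[0,T]$, so the splice time $\tau$ may be close to $0$ and the connecting curve $\sigma$ then spans a time window of length $\theta\sim T$. On that window the only upper bound available is {\bf (F2)}, $F\le c_F(1+|v|^\alpha)$, and the constant part alone contributes about $c_F\cdot(\theta-O(\lambda))\sim c_F T$ to $J^{\tau,T}(\sigma)$, no matter how slowly you cruise (note {\bf (F2)} allows $F\approx c_F$ at all small velocities, and lowering the speed cannot remove this term). A cost of order $T$ uniformly in $\lambda$ is not $O(\lambda^2R_0^2+R_0^\alpha\lambda^{-\alpha}T)$ once $\lambda$ is large, and this uniformity is exactly what the lemma is for: in \Cref{prop:existslimit} one divides by $T$, lets $T\to\infty$ and then $\lambda\to\infty$, so an extra $c_F T$ would destroy the argument. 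Discarding the tail of $\gamma$ on $[\tau,T]$ gives nothing back either, since there one only knows $F\ge 0$.

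The missing idea is that the splicing time must be forced close to $T$, and this cannot come from a global averaging argument; it comes from optimality. The paper takes $\tau$ to be (essentially) the last time with $|\dot\gamma(t)|\le\lambda R_0$, and compares the upper bound $J^T(\gamma)\le c_F(1+R_0^\alpha)T$ of \Cref{lem:upperbo} with the lower bound $\frac{1}{c_F}(\lambda R_0)^\alpha-c_F$ per unit time on the set where the speed exceeds $\lambda R_0$; this yields $T-\tau-1\lesssim\lambda^{-\alpha}T$ (the hypothesis $R_0\ge c_F^{2/\alpha}$ enters precisely here). The whole replacement then lives on a window of length $O(\lambda^{-\alpha}T)+2$: one unit-time connection from $(\gamma(\tau),\dot\gamma(\tau))$, with $|\dot\gamma(\tau)|\le\lambda R_0$, costing $O(\lambda^2R_0^2)$ by \Cref{lem:reachablecurve}, followed by a loop at $(x,v)$ over the remaining short time, costing $O(R_0^2+R_0^\alpha\lambda^{-\alpha}T)$, so even the $O(1)$ part of $F$ is harmless. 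If you try to repair your argument by restricting the averaging to a terminal window of length $\sim\lambda^{-\alpha}T$, the available moment bound only gives a splice speed $\lesssim\lambda R_0$ there, and you are back to the paper's proof; so the fix is not a variant of your scheme but the paper's choice of $\tau$.
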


\begin{remarks}\label{rem.jkhzrnedg}  The result also holds when $F=F(t,x,v)$ depends also on time, provided that $F$ is continuous and satisfies {\bf (F2)} with a constant $c_F$ independent of $t$. In addition, by the construction in the proof, there exists $\tau>0$ such that $\tilde \gamma=\gamma$ on $[0,\tau]$ and 
$$
\int_\tau^T (\frac12 |\ddot{\tilde \gamma}(t)|^2+c_F(1+|\dot{\tilde \gamma}(t)|^\alpha))dt\leq C_3(\lambda^2 R_0^2 + R_0^{\alpha}\lambda^{-\alpha}T ).
$$
Finally, the map which associates $\tilde \gamma$ and $\tau$ to $\gamma$ is measurable. 
\end{remarks}

\begin{proof} Let 
$$
\tau:= \left\{\begin{array}{ll}
\sup\{t\geq0, \; |\dot \gamma(t)|\leq \lambda R_0\} & {\rm if}\; |\gamma(T-1)|>\lambda R_0,\\
T-1 & {\rm otherwise}.
\end{array}\right.
$$
If $\tau\geq T-2$, we set 
$$
\tilde \gamma(t)=  \left\{\begin{array}{ll} 
\gamma(t) & {\rm for }\; t\in [0,\tau],\\ 
\sigma(t-\tau) & {\rm for}\; t\in [\tau, T],
\end{array}\right.
$$
where $\sigma$ is the map built in Lemma \ref{lem:reachablecurve} with $\theta = T-\tau$, $\sigma(0)=\gamma(\tau)$, $\dot \sigma(0)=\dot\gamma(\tau)$,  $\sigma(T-\tau)=x$, $\dot \sigma(T-\tau)=v$. If  $\tau< T-2$, then we set 
$$
\tilde \gamma(t)=  \left\{\begin{array}{ll} 
\gamma(t) & {\rm for }\; t\in [0,\tau],\\ 
\sigma_1(t-\tau) & {\rm for}\; t\in [\tau, \tau+1],\\
\sigma_2(t-\tau-1) & {\rm for}\; t\in [\tau+1, T], 
\end{array}\right.
$$
where $\sigma_1$ and $\sigma_2$ are the map built in Lemma \ref{lem:reachablecurve} with $\theta = 1$, $\sigma_1(0)=\gamma(\tau)$, $\dot \sigma_1(0)=\dot\gamma(\tau)$,  $\sigma_1(1)=x$, $\dot \sigma(1)=v$ and $\theta= T-\tau-1$ and $\sigma_2(0)=\sigma_2(T-\tau-1)=x$ and $\dot \sigma_2(0)=\dot \sigma_2(T-\tau-1)=v$ respectively. Note that $\tilde \gamma(T)=x$ and $\dot{\tilde \gamma}(T)=v$. 

In order to estimate $J^T(\tilde \gamma)$, we first show that $\tau$ cannot be too small: Namely we claim that
\begin{equation}\label{eq:ine3BIS}
\tau \geq T \left(1-\frac{c_{F}(1+R_0^{\alpha})}{\frac{1}{c_{F}}(\lambda R_0)^{\alpha}-c_{F}} \right)-1 .
\end{equation}
Indeed, let us first recall that by \Cref{lem:upperbo} we have
\begin{equation*}
J^{T}(\gamma) \leq c_{F}(1+R_0^{\alpha})T.	
\end{equation*}
On the other hand, by assumption {\bf (F2)} and the fact that $|\dot\gamma(t)|>\lambda R_0$ on $[\tau, T-1]$ and that $F\geq 0$, we also have that
\begin{align*}
J^{T}(\gamma) & = \int_{0}^{T}{\left(\frac{1}{2}|\ddot\gamma(t)|^{2}+F(\gamma(t), \dot\gamma(t)) \right)\ dt}
\\
& \geq \int_{\tau}^{T-1}{\left(\frac{1}{c_{F}}|\dot\gamma(t)|^{\alpha}-c_{F} \right)\ dt}
\geq (T-\tau-1)\left(\frac{1}{c_{F}}(\lambda R_0)^{\alpha}-c_{F} \right).	
\end{align*}
So \eqref{eq:ine3BIS} holds for $R_0\geq c_F^{2/\alpha}$.

We  estimate $J^T(\tilde \gamma)$ in the case $\tau< T-2$, the other case being similar and easier. Note that $|\dot \gamma(\tau)|\leq \lambda R_0$. By Lemma \ref{lem:reachablecurve} and the fact that $F\geq 0$, we have 
\begin{align*}
J^T(\tilde \gamma) &  = \int_0^\tau (\frac12|\ddot \gamma(t)|^2 + F(\gamma(t), \dot \gamma(t)))dt + \int_0^1 (\frac12|\ddot \sigma_1(t)|^2 + F( \sigma_1(t), \dot  \sigma_1(t)))dt\\ & \qquad \qquad + \int_0^{T-\tau-1} (\frac12|\ddot \sigma_2(t)|^2 + F( \sigma_2(t), \dot  \sigma_2(t)))dt \\ 
& \leq J^T(\gamma)+ C_{2}((\lambda R_0)^2+ (\lambda R_0)^\alpha+ R_0^2(T-\tau-1)^{-1}+ R_0^\alpha (T-\tau-1)). 
\end{align*}
In view of \eqref{eq:ine3BIS} this implies that
\begin{align*}
J^T(\tilde \gamma) \leq J^T(\gamma)+ C_3(\lambda^2 R_0^2 + R_0^{\alpha}\lambda^{-\alpha}T ) ,
\end{align*}
for a constant $C_3$ depending on $\alpha$ and $c_F$ only. 
\end{proof}

}

Next we prove that the $(V^T(0, \cdot, \cdot))$ have locally uniformly bounded oscillations:

\begin{lemma}\label{lem.bdoscVT}
There exists a constant $M_{1}(R) \geq 0$ such that for any $(x,v)$ and $(x_{0},v_{0})$ in $\T^{d} \times \overline{B}_{R}$ we have that	
\begin{equation*}
V^{T}(0,x,v) - V^{T}(0,x_{0},v_{0}) \leq M_{1}(R). 	
\end{equation*}
\end{lemma}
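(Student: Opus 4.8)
The plan is to prove the bound on the oscillation of $V^T(0,\cdot,\cdot)$ over $\T^d\times \overline B_R$ by a ``cut-and-paste'' comparison argument, exactly in the spirit of Lemmas~\ref{lem:reachablecurve} and~\ref{lem.jkhzrnedg}. Fix $(x,v)$ and $(x_0,v_0)$ in $\T^d\times \overline B_R$ and let $\gamma\in\Gamma_0(x_0,v_0)$ be a (nearly) optimal curve for $V^T(0,x_0,v_0)$, so that $J^T(\gamma)\le V^T(0,x_0,v_0)+\eps$. I want to build a competitor $\tilde\gamma\in\Gamma_0(x,v)$ whose cost exceeds $J^T(\gamma)$ by at most a constant depending only on $R$ (and on $\alpha,c_F$). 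Then $V^T(0,x,v)\le J^T(\tilde\gamma)\le V^T(0,x_0,v_0)+M_1(R)+\eps$, and letting $\eps\to0$ gives the claim.

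The construction of $\tilde\gamma$ has two pieces. First, on a short initial interval $[0,\tau]$ (with $\tau$ of order $1$, say $\tau=1$) I prepend a connecting arc $\sigma_1$ produced by Lemma~\ref{lem:reachablecurve} with $\theta=1$, joining $(x,v)$ at time $0$ to $(\gamma(1),\dot\gamma(1))$ at time $1$; this costs at most $C_2(R^2+R_1^\alpha)$ where $R_1$ is a bound for $|\dot\gamma(1)|$. The issue is that $|\dot\gamma(1)|$ is a priori not controlled by $R$, only by the optimality of $\gamma$; but by the growth assumption {\bf (F2)} and the upper bound $J^T(\gamma)\le c_F(1+R^\alpha)T$ from Lemma~\ref{lem:upperbo}, one has $\int_0^T(\tfrac1{c_F}|\dot\gamma(t)|^\alpha-c_F)\,dt\le c_F(1+R^\alpha)T$, so the set of times where $|\dot\gamma|$ is large has small measure; in particular there is some $t_*\in[0,1]$ (or we simply redo the argument using $\tau$ chosen as in Lemma~\ref{lem.jkhzrnedg}) with $|\dot\gamma(t_*)|$ bounded by a constant $c(R)$. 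I would actually mimic Lemma~\ref{lem.jkhzrnedg} directly: apply it with, say, $\lambda=2$ and $R_0=\max\{R,c_F^{2/\alpha}\}$ to replace the tail of $\gamma$ by a curve that returns to $(x_0,v_0)$ — but here I do not need to return to $(x_0,v_0)$; rather I need to \emph{start} at $(x,v)$. So the cleaner route: let $\tau:=\sup\{t\ge0:\ |\dot\gamma(t)|\le\lambda R_0\}$-type stopping time but from the left, i.e. the first time the velocity is moderate, which occurs before time $1$ for $R_0$ large by the energy estimate; then glue $\sigma_1$ from Lemma~\ref{lem:reachablecurve} over $[0,\tau]$ connecting $(x,v)$ to $(\gamma(\tau),\dot\gamma(\tau))$, and set $\tilde\gamma=\gamma$ on $[\tau,T]$. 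Since $\tilde\gamma(T)$ and $\dot{\tilde\gamma}(T)$ need not equal any prescribed value (there is no terminal constraint in $V^T$), we are done with a single splice.

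Putting the pieces together: with $\theta=\tau\le1$ and endpoint velocities bounded by $\lambda R_0$, Lemma~\ref{lem:reachablecurve} gives $J^\tau(\sigma_1)\le C_2(R_0^2\tau^{-1}+R_0^\alpha\tau)$. Here $\tau$ could be small, which makes $\tau^{-1}$ large; to avoid that I enforce $\tau\ge$ some fixed positive number by the same computation as in \eqref{eq:ine3BIS}, or more simply I use the construction of Lemma~\ref{lem.jkhzrnedg}/Remark~\ref{rem.jkhzrnedg} which already produces a splice of controlled cost $C_3(\lambda^2R_0^2+R_0^\alpha\lambda^{-\alpha}T)$ — no, that has a $T$ in it, which is bad. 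The correct fix: choose the interval to have length exactly $1$ regardless, i.e. reparametrize so $\theta=1$, accepting that we must connect $(x,v)$ at time $0$ to $(\gamma(1),\dot\gamma(1))$ at time $1$ and then follow $\gamma$ shifted; since $|\dot\gamma(1)|$ may be huge, instead pick $s_0\in[0,1]$ with $|\dot\gamma(s_0)|\le c(R)$ (exists by the energy estimate since $\{|\dot\gamma|>c(R)\}\cap[0,1]$ has measure $<1$ once $c(R)$ is large enough) and connect $(x,v)$ at time $0$ to $(\gamma(s_0),\dot\gamma(s_0))$ over the interval $[0,1]$ via Lemma~\ref{lem:reachablecurve} with $\theta=1$, then set $\tilde\gamma(t)=\gamma(t-1+s_0)$ for $t\in[1,T-1+s_0]$, and finally extend arbitrarily (e.g. constant velocity) on $[T-1+s_0,T]$ at cost $O(c(R)^\alpha)$; the lost/gained portions of $\gamma$ on $[0,s_0]\cup[T-1+s_0,T]$ contribute a bounded error since $F\ge0$ and the dropped piece only decreases cost. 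This yields $J^T(\tilde\gamma)\le J^T(\gamma)+M_1(R)$ with $M_1(R)=C_2(R_0^2+R_0^\alpha)+C(1+c(R)^\alpha)$ depending only on $R,\alpha,c_F$.

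The main obstacle is precisely the lack of velocity control along the optimal $\gamma$ near the initial time: the connecting-arc lemma needs bounded endpoint velocities, but optimality of $\gamma$ only bounds $\int|\dot\gamma|^\alpha$, not $|\dot\gamma(0^+)|$ pointwise. Resolving this via the ``pick a good time $s_0$ where the velocity is moderate'' argument (a Chebyshev/mean-value observation fed by the a priori energy bound of Lemma~\ref{lem:upperbo}), and then absorbing the $O(1)$-length reparametrization shift, is the crux; everything else is the same polynomial bookkeeping already carried out in Lemmas~\ref{lem:reachablecurve} and~\ref{lem.jkhzrnedg}. I expect the proof in the paper to be written essentially as a corollary of Lemma~\ref{lem.jkhzrnedg} applied twice (once from each endpoint), which is a slicker packaging of the same idea.
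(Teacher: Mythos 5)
Your overall mechanism---prepend a connecting arc built from Lemma~\ref{lem:reachablecurve}, follow the (nearly) optimal curve shifted in time, and use the nonnegativity of the Lagrangian to discard the overshoot---is indeed the paper's argument. But the step you yourself single out as the crux is where your write-up breaks down, and the obstacle you are trying to overcome does not exist. First, the selection of a ``good'' time $s_0\in[0,1]$ with $|\dot\gamma(s_0)|\le c(R)$ for a $T$-independent $c(R)$ does not follow from the energy bound you invoke: Lemma~\ref{lem:upperbo} together with {\bf (F2)} only gives $\int_0^T|\dot\gamma(t)|^\alpha\,dt\le C(R)\,T$, so Chebyshev bounds the measure of $\{|\dot\gamma|>M\}$ by $C(R)T/M^\alpha$, which is smaller than $1$ only for $M\gtrsim (C(R)T)^{1/\alpha}$; as far as this estimate is concerned, nothing prevents the minimizer from having velocity of order $T^{1/\alpha}$ on all of $[0,1]$, and then the arc of Lemma~\ref{lem:reachablecurve} would cost an amount growing with $T$. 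Second, your final ``extend at constant velocity on $[T-1+s_0,T]$ at cost $O(c(R)^\alpha)$'' is unjustified: the velocity of the shifted curve at that splice time is a value of $\dot\gamma$ far along the trajectory, which is not pointwise controlled (and the extension is unnecessary anyway, since $\tilde\gamma(t)=\gamma(t-1+s_0)$ is defined on all of $[1,T]$, there is no terminal constraint, and its cost there is at most $J^T(\gamma)$ because the integrand is nonnegative).

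The repair is immediate and is exactly what the paper does: take $s_0=0$. The minimizer $\gamma^*$ for $V^T(0,x_0,v_0)$ starts with $\dot\gamma^*(0)=v_0$ and $|v_0|\le R$ by hypothesis, so no velocity along the trajectory ever needs to be controlled. Connect $(x,v)$ to $(x_0,v_0)$ over $[0,1]$ by the arc $\sigma$ of Lemma~\ref{lem:reachablecurve} with $\theta=1$ (cost at most $C_2(R^2+R^\alpha)$, both endpoint velocities being bounded by $R$), set $\tilde\gamma=\sigma$ on $[0,1]$ and $\tilde\gamma(t)=\gamma^*(t-1)$ on $[1,T]$, and estimate
\begin{equation*}
V^T(0,x,v)\le J^T(\tilde\gamma)\le C_2(R^2+R^\alpha)+\int_0^{T-1}\Bigl(\tfrac12|\ddot\gamma^*(t)|^2+F(\gamma^*(t),\dot\gamma^*(t))\Bigr)dt\le C_2(R^2+R^\alpha)+V^T(0,x_0,v_0),
\end{equation*}
the last inequality holding because the discarded piece of $\gamma^*$ on $[T-1,T]$ has nonnegative cost. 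In particular the paper does not invoke Lemma~\ref{lem.jkhzrnedg} here at all (no return to the starting point is needed, since $V^T$ carries no terminal constraint), contrary to your closing guess.
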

\begin{proof}
	Let $\gamma^{*}$ be a minimizer for $V^{T}(0,x_{0},v_{0})$ and let $\sigma: [0,1] \to \T^{d}$ be such that $\sigma(0)=x$, $\dot\sigma(0)=v$ and $\sigma(1)=x_0$, $\dot\sigma(1)=v_0$ as in Lemma \ref{lem:reachablecurve} for $\theta=1$. Define
	\begin{align*}
	\tilde\gamma(t)=
	\begin{cases}
		\sigma(t), & \quad t \in [0,1]
		\\
		\gamma^{*}(t-1), & \quad t \in [1,T]. 
	\end{cases}	
	\end{align*}
Then $\tilde\gamma\in \Gamma_0(x,v)$ and, by Lemma \ref{lem:reachablecurve} and the assumption that $F\geq 0$, we have that
\begin{align*}
& V^{T}(0,x,v)-V^{T}(0,x_{0},v_{0}) \leq \int_{0}^{1}{\left(\frac{1}{2}|\ddot\sigma(t)|^{2}+F(\sigma(t), \dot\sigma(t)) \right)\ dt}	
\\
&\qquad \qquad + \int_{1}^{T}{\left(\frac{1}{2}|\ddot\gamma^{*}(t-1)|^{2} + F(\gamma^{*}(t-1), \dot\gamma^{*}(t-1)) \right)\ dt}-V^{T}(0,x_{0},v_{0})
\\
& \qquad \qquad  \leq 2C_{2}R^2 + \int_{0}^{T-1}{\left(\frac{1}{2}|\ddot\gamma^{*}(t)|^{2} + F(\gamma^{*}(t), \dot\gamma^{*}(t)) \right)\ dt}-V^{T}(0,x_{0},v_{0})
\\ 
&\qquad \qquad   \leq 2C_{2}R^2 - \int_{T-1}^T{\left(\frac{1}{2}|\ddot\gamma^{*}(t)|^{2} + F(\gamma^{*}(t), \dot\gamma^{*}(t)) \right)\ dt} \; \leq \; 2C_{2}R^2,
\end{align*}
which is the claim. 
\end{proof}


	\begin{proposition}[{\bf Existence of the limit}]\label{prop:existslimit}
	Assume that $F$ satisfies {\bf (F1)} and {\bf (F2)}. Then, for any $(x, v) \in \T^{d} \times \R^{d}$, the following limits exist:
	\begin{equation*}
	\lim_{T \to +\infty} \frac{1}{T}V^{T}(0,x,v)=\lim_{T \to +\infty}\frac{1}{T}  \inf_{\gamma \in \Gamma_{0}(x,v)}J^{T}(\gamma).	
	\end{equation*}
	In addition the convergence is locally uniform in $(x,v)$ and the limit is independent of $(x,v)$. 
	\end{proposition}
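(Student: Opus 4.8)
The plan is to reduce the problem to a genuinely subadditive quantity. Since $V^T(0,x,v)=\inf_{\gamma\in\Gamma_0(x,v)}J^T(\gamma)$ by the very definition \eqref{eq:value}, the two limits in the statement are one and the same object, so it suffices to prove that $\lim_{T\to+\infty}\frac1T V^T(0,x,v)$ exists, that the convergence is locally uniform, and that the limit does not depend on $(x,v)$. The obstruction is that $T\mapsto V^T(0,x,v)$ is not even approximately subadditive: a trajectory optimal on $[0,T_1]$ ends with an arbitrary, possibly very large, velocity, so it cannot be continued by an optimal trajectory on a second interval — this is exactly the lack of small-time controllability. To bypass it I would fix $(x,v)$ once and for all and introduce the value function with a \emph{periodic} endpoint constraint,
\[
\widetilde V^T(0,x,v):=\inf\Big\{J^T(\gamma):\ \gamma\in H^2(0,T;\T^d),\ \gamma(0)=\gamma(T)=x,\ \dot\gamma(0)=\dot\gamma(T)=v\Big\}.
\]

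First I would check that $\widetilde V^T$ is subadditive and well behaved. Given competitors $\gamma_1$ on $[0,T_1]$ and $\gamma_2$ on $[0,T_2]$, the concatenation equal to $\gamma_1$ on $[0,T_1]$ and to $\gamma_2(\cdot-T_1)$ on $[T_1,T_1+T_2]$ is $C^1$ at the junction (both one-sided velocities equal $v$) and lies in $H^2$, with cost $J^{T_1}(\gamma_1)+J^{T_2}(\gamma_2)$; hence $\widetilde V^{T_1+T_2}(0,x,v)\le\widetilde V^{T_1}(0,x,v)+\widetilde V^{T_2}(0,x,v)$. Moreover $\widetilde V^T(0,x,v)\ge0$ since $F\ge0$, and $\widetilde V^T(0,x,v)<+\infty$: applying Lemma~\ref{lem:reachablecurve} with $\theta=T\ge1$ and both endpoints equal to $(x,v)$ gives $\widetilde V^T(0,x,v)\le C_2\big((|v|+1)^2T^{-1}+(|v|+1)^\alpha T\big)$, which in particular is bounded on every compact subinterval of $(0,+\infty)$. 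By Fekete's subadditivity lemma in its continuous form (or, equivalently, the discrete version on integer times together with this local bound to interpolate), the limit $\ell(x,v):=\lim_{T\to+\infty}\frac1T\widetilde V^T(0,x,v)$ exists and lies in $[0,+\infty)$.

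Next I would sandwich $\frac1T V^T$ between $\frac1T\widetilde V^T$ and $\frac1T\widetilde V^T$ up to a vanishing error. On one side, $V^T\le\widetilde V^T$ since $\widetilde V^T$ carries strictly more constraints, so $\limsup_{T}\frac1T V^T(0,x,v)\le\ell(x,v)$. On the other side, fix $R_0:=\max\{|v|+1,\,c_F^{2/\alpha}\}$, so that $(x,v)\in\T^d\times B_{R_0}$ and $R_0\ge c_F^{2/\alpha}$; for $T\ge2$ and any $\lambda\ge2$, Lemma~\ref{lem.jkhzrnedg} applied to an optimal curve $\gamma$ for $V^T(0,x,v)$ produces $\widetilde\gamma\in\Gamma_0(x,v)$ with $\widetilde\gamma(T)=x$, $\dot{\widetilde\gamma}(T)=v$ and $J^T(\widetilde\gamma)\le J^T(\gamma)+C_3(\lambda^2R_0^2+R_0^\alpha\lambda^{-\alpha}T)$; such a $\widetilde\gamma$ is a competitor for $\widetilde V^T(0,x,v)$, whence $\widetilde V^T(0,x,v)\le V^T(0,x,v)+C_3(\lambda^2R_0^2+R_0^\alpha\lambda^{-\alpha}T)$. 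Dividing by $T$ and letting $T\to+\infty$ yields $\ell(x,v)\le\liminf_{T}\frac1T V^T(0,x,v)+C_3R_0^\alpha\lambda^{-\alpha}$; since $\lambda\ge2$ is arbitrary, sending $\lambda\to+\infty$ gives $\ell(x,v)\le\liminf_{T}\frac1T V^T(0,x,v)$. Together with the $\limsup$ bound this proves $\lim_{T\to+\infty}\frac1T V^T(0,x,v)=\ell(x,v)$.

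Finally, independence of $(x,v)$ and local uniformity follow at once from Lemma~\ref{lem.bdoscVT}. Applying it to the two points in both orders gives $|V^T(0,x,v)-V^T(0,x_0,v_0)|\le M_1(R)$ for all $(x,v),(x_0,v_0)\in\T^d\times\overline B_R$ and all $T$; dividing by $T$ and letting $T\to+\infty$ shows $\ell(x,v)=\ell(x_0,v_0)$, and since any two points of $\T^d\times\R^d$ lie in a common ball, $\ell\equiv\bar\lambda$ is constant. For the uniformity, fixing a reference point $(x_0,v_0)$ and writing, for $(x,v)\in\T^d\times\overline B_R$,
\[
\Big|\tfrac1T V^T(0,x,v)-\bar\lambda\Big|\ \le\ \tfrac{M_1(R)}{T}+\Big|\tfrac1T V^T(0,x_0,v_0)-\bar\lambda\Big|,
\]
the right-hand side is independent of $(x,v)$ and tends to $0$, which is the locally uniform convergence. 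I expect the only genuinely delicate point to be the passage from $V^T$ to the subadditive $\widetilde V^T$; but its core — controlling, with a factor $\lambda^{-\alpha}$ to spare, the cost of bending an optimal trajectory back to its initial position and velocity — is already contained in Lemma~\ref{lem.jkhzrnedg}, so the remaining argument should be short.
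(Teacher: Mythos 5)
Your proof is correct, and it rests on the same two pillars as the paper's argument --- the ``return to the initial state at cost $C_3(\lambda^2R_0^2+R_0^\alpha\lambda^{-\alpha}T)$'' construction of \Cref{lem.jkhzrnedg} and the bounded-oscillation estimate of \Cref{lem.bdoscVT} --- but you organize the existence of the limit differently. The paper does not introduce any auxiliary value function: it picks a sequence $T_n\to\infty$ along which $\liminf_T\frac1TV^T(0,x,v)$ is attained, modifies the corresponding minimizers via \Cref{lem.jkhzrnedg} so that they end at $(x,v)$, extends them $T_n$-periodically in time, and uses these periodic curves as competitors for $J^T$ for all large $T$; this gives directly $\limsup\leq\liminf+C_3R_0^\alpha\lambda^{-\alpha}$, and one concludes by letting $n\to\infty$ and $\lambda\to\infty$. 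You instead make the underlying subadditive structure explicit: you define the value $\widetilde V^T$ with periodic endpoint constraints, check subadditivity by concatenation and finiteness via \Cref{lem:reachablecurve}, obtain the limit of $\widetilde V^T/T$ from Fekete's lemma, and then sandwich $V^T$ between $\widetilde V^T-C_3(\lambda^2R_0^2+R_0^\alpha\lambda^{-\alpha}T)$ and $\widetilde V^T$. The two mechanisms are equivalent in substance (the paper's periodic time-extension is exactly what makes $\widetilde V^T$ subadditive), but your formulation buys a slightly cleaner logical structure, avoids the choice of a liminf-attaining subsequence, and yields as a byproduct the representation $\bar\lambda=\inf_{T\geq1}\widetilde V^T(0,x,v)/T$; the paper's version avoids introducing a new value function and the (mildly delicate) continuous-parameter form of Fekete's lemma, for which your remark on local boundedness and interpolation from integer times is indeed the right fix. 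The final step --- independence of the limit of $(x,v)$ and local uniformity from the oscillation bound --- is the same in both proofs, and your two-sided use of \Cref{lem.bdoscVT} is fine.
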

\begin{proof} Fix $R_0\geq c_F^{2/\alpha}$ such that $|v|\leq R_0$. 
Let $\{T_n\}_{n \in \N}$ and let $\{ \gamma_{n}\}_{n \in \N}$ be a sequence of minimizers for $V^{T_{n}}(0,x,v)$  such that $T_{n} \to \infty$ as $n \to \infty$ and 
\begin{equation*}
\liminf_{T \to \infty}\frac{1}{T}V^{T}(0,x,v)=\lim_{n \to \infty} \frac{1}{T_{n}}J^{T_{n}}(\gamma_{n}).	
\end{equation*}
{ For $\lambda\geq 2$, let us define $\tilde \gamma_n$ is in Lemma \ref{lem.jkhzrnedg}. Then we know that  $\tilde \gamma_n(T)=x$, $\dot{\tilde \gamma}_n(T)=v$ and 
\begin{equation}\label{eq:mainine3BIS}
J^{T_n}(\tilde \gamma_n) \leq J^{T_n}(\gamma_n)+ C_3(\lambda^2 R_0^2 + R_0^{\alpha}\lambda^{-\alpha}T_n ) .
\end{equation}
Let us define $\hat\gamma_{n}$ as the periodic extension of the curve $\tilde\gamma_{n}$, i.e. $\hat\gamma_{n}$ is $T_n$-periodic and it is equal to $\tilde\gamma_{n}$ on $[0, T_n]$.
Then, taking $\hat\gamma_{n}$ as competitors for $J^{T}$ we obtain that
\begin{align*}
& \limsup_{T \to \infty}\inf_{\gamma \in \Gamma_{0}(x,v)}\frac{1}{T} J^{T}(\gamma) \leq \limsup_{T \to \infty}\frac{1}{T}J^{T}(\hat\gamma_{n})
\\
&\qquad =  \frac{1}{T_n}J^{T_n}(\tilde\gamma_{n}) \leq \left( \frac{1}{T_{n}}J^{T_{n}}(\gamma_{n}) + C_3(\lambda^2R_0^2T_n^{-1} + R_0^{\alpha}\lambda^{-\alpha})\right),
\end{align*}
where the equality holds true since we are taking the limit of a periodic function and the last inequality holds by \eqref{eq:mainine3BIS}. 

We get the conclusion letting $n \to \infty$ and then $\lambda \to \infty$, indeed: as $n \to \infty$ we deduce that
\begin{align*}
\limsup_{T \to \infty}\inf_{\gamma \in \Gamma_{0}(x, v)}\frac{1}{T}J^{T}(\gamma)&  \leq \lim_n  \frac{1}{T_{n}}J^{T_{n}}(\gamma_{n}) +C_3 R_0^{\alpha}\lambda^{-\alpha}\\  
& = 	
 \liminf_{T\to+\infty} \inf_{\gamma \in \Gamma_{0}(x, v)}\frac{1}{T} J^{T}(\gamma)+	C_3R_0^{\alpha}\lambda^{-\alpha}
\end{align*}
and then, taking the limit as $\lambda \to \infty$ we get
\begin{equation*}
	\limsup_{T \to \infty}\inf_{\gamma \in \Gamma_{0}(x, v)}\frac{1}{T}J^{T}(\gamma) \leq \liminf \inf_{\gamma \in \Gamma_{0}(x, v)}\frac{1}{T}J^{T}(\gamma).
\end{equation*}
As the $(V^T(0, \cdot,\cdot))$ have locally bounded oscillation (Lemma \ref{lem.bdoscVT}), the above convergence is locally uniform and the limit does not depend on $(x,v)$. }
\end{proof}

\subsection{Characterization  of the ergodic limit}

{ In this part we characterize the limit given in Proposition~\ref{prop:existslimit} in term of closed measures. The proof of the main result, Proposition \ref{prop:caract}, where this characterization is stated, is technical and requires several steps. Here are the main ideas of the proof. By using standard results on occupational measures, one can obtain in a relatively easy way that 
\begin{align*}
\lambda:= &\lim_{T \to \infty} \inf_{\gamma \in \Gamma_{0}(x_{0},v_{0})}\frac{1}{T}J^{T}(\gamma) 
\geq\  \inf_{\mu\in {\mathcal C}} \int_{\T^{d} \times \R^{d} \times \R^{d}}{\left(\frac{1}{2}|w|^{2}+F(x,v) \right)\ \mu(dx,dv,dw)}, 
\end{align*}
where $\mathcal{C}$ denotes the set of closed probability measures (see Definition \ref{def:closedmeasure}). The difficult part of the proof is the opposite inequality. The first step for this is a min-max formula (Theorem \ref{thm:minmaxform}) which gives, by  using the characterization of closed measures, that 
	\begin{align*}
	\begin{split}
	& \inf_{\mu \in \C} \int_{\T^{d} \times \R^{d} \times \R^{d}}{\left(\frac{1}{2}|w|^{2}+F(x,v) \right)\ \mu(dx,dv,dw)}
	\\
	& \qquad = \sup_{\varphi \in C^{\infty}_{c}(\T^{d} \times \R^{d})} \inf_{(x,v) \in \T^{d} \times \R^{d}}\left\{-\frac{1}{2}|D_{v}\varphi(x,v)|^{2}-\langle D_{x}\varphi(x,v), v \rangle +F(x,v)\right\}.
	\end{split}
	\end{align*}
In order to exploit this inequality, one just needs to find a map $\varphi\in C^{\infty}_{c}(\T^{d} \times \R^{d})$ for which 
$$
-\frac{1}{2}|D_{v}\varphi(x,v)|^{2}-\langle D_{x}\varphi(x,v), v \rangle +F(x,v)
$$
is almost equal to $\lambda$. This is not easy because the corrector of our ergodic problem does not seem to exist (at least in the usual sense) because of the lack of controllability and, if it existed, it certainly would not be smooth with a compact support. The standard idea in this set-up is to use instead the approximate corrector, i.e., the solution $V_\delta$ to 
$$
	\delta V_{\delta}(x,v)+\frac{1}{2}|D_{v}V_{\delta}(x,v)|^{2}+\langle D_{x}V_{\delta}(x,v), v \rangle = F(x,v)\qquad {\rm in }\; \T^d\times \R^d.
$$
However, this approximate corrector  has not a compact support either (it is even coercive, see Proposition \ref{prop.reguVdelta}) and $\delta V_\delta$ does not converge uniformly to $-\lambda$, but only locally uniformly. We overcome these issues by an extra approximation argument (Lemma \ref{lem:approx1}). \\
}

Let us first explain why closed measures pop up naturally in our problem. To see this, let $(x_{0}, v_{0}) \in \T^{d} \times \R^{d}$ be an initial position  { and let $\gamma^T_{(x_0,v_0)}$ be an optimal trajectory for $V^{T}(0,x_{0},v_{0})$.} We define the family of Borel probability measures $\{ \mu_{T}\}_{T > 0}$ as follows: for any function $\varphi \in C^{\infty}_{c}(\T^{d} \times \R^{d} \times \R^{d})$ 
\begin{equation}\label{eq:uniform}
	\int_{\T^{d} \times \R^{d} \times \R^{d}}{\varphi(x,v,w)\ \mu^{T}(dx,dv,dw)}=\frac{1}{T}\int_{0}^{T}{\varphi(\gamma^T_{(x_{0},v_{0})}(t), \dot\gamma^T_{(x_{0},v_{0})}(t), \ddot\gamma^T_{(x_{0},v_{0})}(t))\ dt}.
\end{equation}


\begin{lemma}\label{lem:measureconv}
Assume that $F$ satisfies {\bf (F1)} and {\bf (F2)}. Let the family of probability measures $\{\mu^{T}\}_{T >0}$ be defined by \cref{eq:uniform}. Then, $\{ \mu^{T}\}_{T>0}$ is tight and there exists a closed measure $\mu^{*}$ such that, up to a subsequence, $\mu^{T} \rightharpoonup^{*} \mu^{*}$ as $T \to +\infty$.  
\end{lemma}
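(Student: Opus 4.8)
The plan is to prove tightness by obtaining a uniform bound on the $(|w|^2 + |v|^\alpha)$-moment of $\mu^T$, which then gives weak-$*$ precompactness, and then to pass to the limit in the closedness condition. For the moment bound, note that by construction
\begin{equation*}
\int_{\T^d\times\R^d\times\R^d}\left(\tfrac12|w|^2 + F(x,v)\right)\mu^T(dx,dv,dw) = \frac1T J^T(\gamma^T_{(x_0,v_0)}) = \frac1T V^T(0,x_0,v_0),
\end{equation*}
and by \Cref{lem:upperbo} the right-hand side is bounded by $c_F(1+|v_0|^\alpha)$, uniformly in $T\geq 1$. Since $F(x,v)\geq \frac{1}{c_F}|v|^\alpha - c_F$ by {\bf (F2)} and $F\geq 0$, this yields
\begin{equation*}
\int \left(\tfrac12|w|^2 + \tfrac{1}{c_F}|v|^\alpha\right)\mu^T(dx,dv,dw) \leq c_F(1+|v_0|^\alpha) + c_F,
\end{equation*}
so the $(|w|^2+|v|^\alpha)$-moments of $\{\mu^T\}$ are uniformly bounded. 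Because $\T^d$ is compact and the sublevel sets $\{|v|^\alpha + |w|^2 \leq k\}$ are compact in $\R^d\times\R^d$, Markov's inequality gives tightness of $\{\mu^T\}_{T>0}$ on $\T^d\times\R^d\times\R^d$, hence by Prokhorov's theorem a subsequence $\mu^{T_n}\rightharpoonup^* \mu^*$ for some $\mu^*\in\PP(\T^d\times\R^d\times\R^d)$. The uniform moment bound also passes to the limit by lower semicontinuity of $\mu\mapsto\int(|w|^2+|v|^\alpha)\,d\mu$ under weak-$*$ convergence, so $\mu^*\in\PP_{\alpha,2}(\T^d\times\R^d\times\R^d)$.

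It remains to check that $\mu^*$ is closed. Fix $\varphi\in C^\infty_c(\T^d\times\R^d)$. Since $t\mapsto \varphi(\gamma^T(t),\dot\gamma^T(t))$ is absolutely continuous (indeed $\gamma^T\in H^2$, so $\dot\gamma^T$ is absolutely continuous), the fundamental theorem of calculus gives
\begin{equation*}
\varphi(\gamma^T(T),\dot\gamma^T(T)) - \varphi(\gamma^T(0),\dot\gamma^T(0)) = \int_0^T \Big(\langle D_x\varphi(\gamma^T(t),\dot\gamma^T(t)),\dot\gamma^T(t)\rangle + \langle D_v\varphi(\gamma^T(t),\dot\gamma^T(t)),\ddot\gamma^T(t)\rangle\Big)\,dt.
\end{equation*}
Dividing by $T$, the right-hand side is exactly $\int \big(\langle D_x\varphi(x,v),v\rangle + \langle D_v\varphi(x,v),w\rangle\big)\,\mu^T(dx,dv,dw)$, while the left-hand side is bounded in absolute value by $2\|\varphi\|_\infty/T \to 0$ as $T\to\infty$. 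The integrand $(x,v,w)\mapsto \langle D_x\varphi(x,v),v\rangle + \langle D_v\varphi(x,v),w\rangle$ is continuous but not bounded (it grows linearly in $v$ and $w$), so I cannot pass to the weak-$*$ limit directly; however, since $D_x\varphi, D_v\varphi$ have compact support in $(x,v)$, say contained in $\T^d\times B_\rho$, the integrand vanishes outside $\{|v|\leq\rho\}$ and on that set is dominated by $C(1+|w|)$. Using the uniform bound on the second moment of $w$ one obtains uniform integrability, which together with weak-$*$ convergence lets one pass to the limit (e.g. by truncating $w$ at level $k$, using weak-$*$ convergence for the bounded truncated integrand, and controlling the tail by $\sup_T\int_{|w|>k}|w|\,\mu^T \leq k^{-1}\sup_T\int|w|^2\,\mu^T$). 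Hence
\begin{equation*}
\int_{\T^d\times\R^d\times\R^d}\Big(\langle D_x\varphi(x,v),v\rangle + \langle D_v\varphi(x,v),w\rangle\Big)\,\mu^*(dx,dv,dw) = 0,
\end{equation*}
so $\mu^*\in\C$.

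The main obstacle, as flagged above, is the non-boundedness of the test integrand $\langle D_x\varphi,v\rangle + \langle D_v\varphi,w\rangle$: weak-$*$ convergence of probability measures only controls integrals of bounded continuous functions, so one genuinely needs the uniform moment estimate (from \Cref{lem:upperbo}) to upgrade to a uniform-integrability argument. Everything else — tightness via Prokhorov, the fundamental-theorem-of-calculus identity, and lower semicontinuity of moments — is routine. One minor point to be careful about: the definition of $\mu^T$ requires an optimal trajectory $\gamma^T_{(x_0,v_0)}$ for $V^T(0,x_0,v_0)$ to exist; this follows from the standard direct method in the calculus of variations applied to $J^T$ with the coercivity in {\bf (F2)}, and is implicitly used already in the preceding lemmas.
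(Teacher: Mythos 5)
Your proof is correct, and for the tightness part it coincides with the paper's argument (optimality of $\gamma^T$, the lower bound in {\bf (F2)}, and \Cref{lem:upperbo} give a uniform $(|w|^2+|v|^\alpha)$-moment bound, then Prokhorov). For the closedness part you take a cleaner route than the paper. The paper first performs a surgery on the optimal curve: it introduces the last time $\tau^*$ at which $|\dot\gamma^T|\leq R$ (with $R$ the size of the support of $\varphi$), glues on a connector $\sigma^*$ from \Cref{lem:reachablecurve} returning to $(x_0,v_0)$, and splits the integral into three pieces ${\bf A}$, ${\bf B}$, ${\bf C}$, showing ${\bf A}={\bf C}=0$ and ${\bf B}=O(1/T)$. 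You instead apply the fundamental theorem of calculus directly to $t\mapsto\varphi(\gamma^T(t),\dot\gamma^T(t))$ on $[0,T]$ and observe that the endpoint difference is bounded by $2\|\varphi\|_\infty/T$ regardless of where $\dot\gamma^T(T)$ lands, so the loop construction is unnecessary here; this is simpler and equally rigorous. Moreover, you explicitly supply the step the paper leaves implicit: since the test integrand $\langle D_x\varphi,v\rangle+\langle D_v\varphi,w\rangle$ has linear growth in $w$, the convergence $\int(\cdots)\,d\mu^{T_n}\to\int(\cdots)\,d\mu^*$ is not a direct consequence of weak-$*$ convergence and needs the uniform second-moment bound on $w$ (truncation plus tail control), which you provide; just note that the truncation should be done with a continuous cutoff in $w$ rather than a sharp indicator so that the truncated integrand is in $C_b$, a routine adjustment. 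Your remark that existence of minimizers of $J^T$ must be (and is, by the direct method under {\bf (F2)}) available is also consistent with the paper, which uses such minimizers throughout without comment.
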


\begin{proof}
We first prove that $\{\mu_{T}\}_{T>0}$ its a tight family of probability measures. Indeed, by assumption {\bf (F2)} for $(x_{0},v_{0}) \in \T^{d} \times \R^{d}$ we know that
	\begin{align*}
	\frac{1}{T}V^{T}(0,x_{0},v_{0})	= & \frac{1}{T} \int_{0}^{T}{\Big(\frac{1}{2}|\ddot\gamma^T_{(x_{0},v_{0})}(t)|^{2}+F(\gamma^T_{(x_{0},v_{0})}(t), \dot\gamma^T_{(x_{0},v_{0})}(t))	\Big)\ dt}
	\\
	= & \int_{\T^{d} \times \R^{d} \times \R^{d}}{\Big(\frac{1}{2}|w|^{2}+F(x,v)	\Big)\ \mu^{T}(dx,dv,dw)}
	\\
	\geq &  \int_{\T^{d} \times \R^{d} \times \R^{d}}{\Big(\frac{1}{2}|w|^{2}+\frac{1}{c_{F}}|v|^{\alpha}-c_F\Big)\ \mu^{T}(dx,dv,dw)}.
	\end{align*}
On the other hand, by \Cref{lem:upperbo} we have that
\begin{equation*}
\frac{1}{T}V^{T}(0,x_{0},v_{0}) \leq C_{1}	
\end{equation*}
where $C_{1}$ only depends on the initial point $(x_0,v_0)$. Therefore, we obtain that 
\begin{equation*}
	\int_{\T^{d} \times \R^{d} \times \R^{d}}{\Big(\frac{1}{2}|w|^{2}+\frac{1}{c_{F}}|v|^{\alpha}\Big)\ \mu^{T}(dx,dv,dw)} \leq C_{1}
\end{equation*}
which implies that $\{\mu^{T}\}_{T>0}$ is tight. By Prokhorov theorem there exists a measure $\mu^{*} \in \PP(\T^{d} \times \R^{d} \times \R^{d})$ such that up to a subsequence $\mu^{T} \rightharpoonup^{*} \mu^{*}$ as $T \to +\infty$. 

We now show  that the measure $\mu^{*}$ is closed in the sense of \Cref{def:closedmeasure}.
Let $\varphi \in C^{\infty}_{c}(\T^{d} \times \R^{d})$ be a test function and let $R \geq 0$ be such that $\varphi(x,v)=0$ for any $(x,v) \in \T^{d} \times B^{c}_{R}$. 
	Moreover, define
	\begin{align*}
\tau^{*}=
\begin{cases}
\sup\{t \in [0, T]: |\dot\gamma^T_{(x_{0}, v_{0})}(t)| \leq R\}, \quad & \text{if}\ |\dot\gamma_{(x_{0}, v_{0})}(T)| > R
\\
T, \quad &  \text{if}\ |\dot\gamma_{(x_{0}, v_{0})}(T)| \leq R
\end{cases}
\end{align*}
and let $\sigma^{*}: [\tau^{*}, \tau^{*}+1] \to \T^{d}$ be as in \Cref{lem:reachablecurve} such that $\sigma^{*}(\tau^{*})=\gamma^{T}_{(x_{0}, v_{0})}(\tau^{*})$, $\dot\sigma^{*}(\tau^{*})=\dot\gamma^{T}_{(x_{0}, v_{0})}(\tau^{*})$ and $\sigma^{*}(\tau^{*}+1)=x_0$, $\dot\sigma^{*}(\tau^{*}+1)=v_0$. 
Moreover, define
\begin{align*}
\tilde\gamma(t)=
\begin{cases}
	\gamma^{T}_{(x_{0}, v_{0})}(t), & \quad t \in [0,\tau^{*}]
	\\
	\sigma^{*}(t), & \quad t \in (\tau^{*}, \tau^{*}+1].
\end{cases}	
\end{align*}
Then we get
\begin{align*}
& \int_{\T^{d} \times \R^{d} \times \R^{d}}{\Big(\langle D_{x}\varphi(x,v), v \rangle + \langle D_{v}\varphi(x,v), w \rangle \Big)\ d\mu^{T}(x,v,w)} 
\\
=&\ \frac{1}{T}\int_{0}^{T}{\Big(\langle D_{x}\varphi(\gamma^{T}_{(x_{0}, v_{0})}(t),\dot\gamma^{T}_{(x_{0}, v_{0})}(t)), \dot\gamma^{T}_{(x_{0}, v_{0})}(t) \rangle + \langle D_{v}\varphi(\gamma^{T}_{(x_{0}, v_{0})}(t),\dot\gamma^{T}_{(x_{0}, v_{0})}(t)), \ddot\gamma^{T}_{(x_{0}, v_{0})}(t) \rangle \Big)\ dt}
\\
=&\ \underbrace{\frac{1}{T}\int_{0}^{\tau^{*}+1}{\Big(\langle D_{x}\varphi(\tilde{\gamma}^{T}(t),\dot{\tilde{\gamma}}^{T}(t)), \dot{\tilde{\gamma}}^{T}(t) \rangle + \langle D_{v}\varphi(\tilde{\gamma}^{T}(t),\dot{\tilde{\gamma}}^{T}(t)), \ddot{\tilde{\gamma}}^{T}(t) \rangle \Big)\ dt}}_{\bf{A}}
\\
-&\ \underbrace{\frac{1}{T}\int_{\tau^{*}}^{\tau^{*}+1}{\Big(\langle D_{x}\varphi(\sigma^{*}(t),\dot\sigma^{*}(t)), \dot\sigma^{*}(t) \rangle + \langle D_{v}\varphi(\sigma^{*}(t),\dot\sigma^{*}(t)), \ddot\sigma^{*}(t) \rangle \Big)\ dt}}_{\bf{B}}
\\
+& \underbrace{\int_{\tau^{*}}^{T}{\Big(\langle D_{x}\varphi(\gamma^{T}_{(x_{0}, v_{0})}(t),\dot\gamma^{T}_{(x_{0}, v_{0})}(t)), \dot\gamma^{T}_{(x_{0}, v_{0})}(t) \rangle + \langle D_{v}\varphi(\gamma^{T}_{(x_{0}, v_{0})}(t),\dot\gamma^{T}_{(x_{0}, v_{0})}(t)), \ddot\gamma^{T}_{(x_{0}, v_{0})}(t) \rangle \Big) \ dt}}_{\bf C}
\end{align*}

One can immediately observe that by construction {\bf C}$=0$ (since $\varphi$ has a support in $\T^d\times B_R$). By the  definition of $\tilde{\gamma}$ one also has that ${\bf A}=0$. 
The behavior of {\bf B} is also immediate because, as $\varphi$ is bounded, 
\begin{align*}
	& \frac{1}{T}\int_{\tau^{*}}^{\tau^{*}+1}{\Big(\langle D_{x}\varphi(\sigma^{*}(t),\dot\sigma^{*}(t)), \dot\sigma^{*}(t) \rangle + \langle D_{v}\varphi(\sigma^{*}(t),\dot\sigma^{*}(t)), \ddot\sigma^{*}(t) \rangle \Big)\ dt}
	\\
	=&\ \frac{1}{T}(\varphi(\sigma^{*}(\tau^{*}+1), \dot\sigma^{*}(\tau^{*}+1))-\varphi(\sigma^{*}(\tau^{*}), \sigma^{*}(\tau^{*})) \to 0, \quad \text{as}\ T \to +\infty.
\end{align*}
The proof is thus complete.
\end{proof}

The next step consists in formulating in two different ways the expected limit of Proposition \ref{prop:existslimit}. 

\begin{theorem}[{\bf Minmax formula}]\label{thm:minmaxform}
	Assume that $F$ satisfies {\bf (F1)} and {\bf (F2)}. Then, the following equality holds true:
	\begin{align}\label{eq:minimax}
	\begin{split}
	& \inf_{\mu \in \C} \int_{\T^{d} \times \R^{d} \times \R^{d}}{\left(\frac{1}{2}|w|^{2}+F(x,v) \right)\ \mu(dx,dv,dw)}
	\\
	& \qquad = \sup_{\varphi \in C^{\infty}_{c}(\T^{d} \times \R^{d})} \inf_{(x,v) \in \T^{d} \times \R^{d}}\left\{-\frac{1}{2}|D_{v}\varphi(x,v)|^{2}-\langle D_{x}\varphi(x,v), v \rangle +F(x,v)\right\}.
	\end{split}
	\end{align}
\end{theorem}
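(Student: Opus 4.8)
The equality \eqref{eq:minimax} is an instance of infinite-dimensional linear-programming duality, and the plan is to prove the two inequalities separately. Introduce the linear operator $A$ sending $\varphi\in C^\infty_c(\T^d\times\R^d)$ to the continuous function $A\varphi(x,v,w):=\langle D_x\varphi(x,v),v\rangle+\langle D_v\varphi(x,v),w\rangle$, so that, by \Cref{def:closedmeasure}, $\C$ is precisely the set of $\mu\in\PP_{\alpha,2}(\T^d\times\R^d\times\R^d)$ with $\int A\varphi\,d\mu=0$ for all such $\varphi$. Minimizing the quadratic $w\mapsto\tfrac12|w|^2-\langle D_v\varphi(x,v),w\rangle$ pointwise gives the elementary identity
\begin{equation}\label{eq:ptwisemin}
\inf_{(x,v,w)}\big(L(x,v,w)-A\varphi(x,v,w)\big)=\inf_{(x,v)}\Big(-\tfrac12|D_v\varphi(x,v)|^2-\langle D_x\varphi(x,v),v\rangle+F(x,v)\Big),
\end{equation}
and, since $D\varphi$ has compact support while $F$ is coercive in $v$ by {\bf (F2)}, the function $(x,v,w)\mapsto L(x,v,w)-A\varphi(x,v,w)$ is continuous and coercive, hence bounded below and attaining its infimum. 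Weak duality (``$\geq$'' in \eqref{eq:minimax}) is then immediate: for $\mu\in\C$ and $\varphi\in C^\infty_c$, $\int L\,d\mu=\int(L-A\varphi)\,d\mu\geq\inf_{(x,v,w)}(L-A\varphi)$, which by \eqref{eq:ptwisemin} is the right-hand integrand of \eqref{eq:minimax}; taking $\sup$ over $\varphi$ and $\inf$ over $\mu\in\C$ finishes. In particular both sides are finite: the left-hand side is bounded by $\int L\,d\mu^\ast<\infty$ for the closed measure $\mu^\ast$ from \Cref{lem:measureconv} (using also \Cref{lem:upperbo}), and the right-hand side is then finite too.

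The reverse inequality---the absence of a duality gap---is the heart of the proof, and I would establish it by a Fenchel--Rockafellar argument. Consider the value function $p(\zeta):=\inf\{\int L\,d\mu:\ \mu\in\PP_{\alpha,2},\ \int A\varphi\,d\mu=\langle\zeta,\varphi\rangle\ \forall\varphi\in C^\infty_c\}$, defined for $\zeta$ in the dual of $C^\infty_c(\T^d\times\R^d)$ (with $\inf\emptyset=+\infty$); it is convex, nonnegative, and $p(0)$ equals the left-hand side of \eqref{eq:minimax}. The crucial step is that $p$ is lower semicontinuous at $0$: if $\zeta_n\to0$ with $p(\zeta_n)\to\ell<\infty$, choose near-optimal $\mu_n$; then $\sup_n\int L\,d\mu_n<\infty$, and since $L\geq\tfrac12|w|^2+\tfrac1{c_F}|v|^\alpha-c_F$ by {\bf (F2)} the family $\{\mu_n\}$ has uniformly bounded $(|w|^2+|v|^\alpha)$-moments, hence is tight; a weak-$\ast$ limit $\mu$ satisfies $\int L\,d\mu\leq\ell$ by lower semicontinuity of $\mu\mapsto\int L\,d\mu$, and $\int A\varphi\,d\mu=\lim_n\int A\varphi\,d\mu_n=\lim_n\langle\zeta_n,\varphi\rangle=0$ because $A\varphi$ is continuous with at most linear growth in $w$ on the $(x,v)$-support of $\varphi$, hence uniformly integrable against $\{\mu_n\}$; thus $\mu\in\C$ and $p(0)\leq\ell$. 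Convexity and lower semicontinuity at $0$ give $p(0)=p^{\ast\ast}(0)$, and a direct computation of the Legendre transform yields $p^\ast(\varphi)=\sup_{\mu\in\PP_{\alpha,2}}\int(A\varphi-L)\,d\mu=-\inf_{(x,v,w)}(L-A\varphi)$, the last equality because the infimum of the continuous, bounded-below, coercive function $L-A\varphi$ over probability measures is attained at a Dirac mass. Hence $p(0)=p^{\ast\ast}(0)=\sup_\varphi(-p^\ast(\varphi))=\sup_\varphi\inf_{(x,v,w)}(L-A\varphi)$, which by \eqref{eq:ptwisemin} is exactly the right-hand side of \eqref{eq:minimax}.

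The single delicate point is the no-gap statement, and its only real difficulty is the non-compactness of the velocity and acceleration variables, which rules out a direct appeal to Sion's minimax theorem on a weak-$\ast$ compact set of measures; the remedy is precisely the coercivity of $L$ coming from {\bf (F2)}, which makes the sublevel sets of $\mu\mapsto\int L\,d\mu$ tight (Prokhorov) and lets the relevant linear functionals pass to the weak-$\ast$ limit. Alternatively one could truncate $F$ and the domain to reduce to a genuinely compact setting, apply Sion there, and pass to the limit, but the perturbation-function argument above is cleaner.
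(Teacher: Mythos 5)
Your proof is correct in substance but takes a genuinely different route from the paper. The paper writes the left-hand side as $\inf_{\mu\in\PP_{2,\alpha}}\sup_{\varphi\in C^\infty_c}\mathcal L(\varphi,\mu)$ and invokes a Von Neumann--type minimax theorem (Theorem \ref{thm:minmax}, from \cite{bib:OPA}) whose hypotheses only require compactness of a single sublevel set $\{\mu:\mathcal L(0,\mu)\le c^*\}$; that compactness is exactly the coercivity/tightness observation you make, and the finiteness of $c^*$ is obtained more cheaply than in your write-up by testing with the closed measure $\delta_{(x_0,0,0)}$ rather than with the occupational limit of \Cref{lem:measureconv}. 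You instead run a Fenchel--Rockafellar/biconjugation argument on the perturbation function $p(\zeta)$, proving absence of a duality gap from convexity, nonnegativity and lower semicontinuity of $p$ at $0$; the closed-measure constraint passes to the weak-$*$ limit by the uniform second-moment bound, and $p^*(\varphi)=-\inf(L-A\varphi)$ reduces to Dirac masses, which after the pointwise minimization in $w$ gives exactly the right-hand side of \eqref{eq:minimax}. What each buys: the paper's route is shorter because the minimax theorem is black-boxed (and its sublevel-set hypothesis is tailor-made for the non-compactness you worry about, so Sion-type arguments are not actually ruled out here); yours is more self-contained and makes the mechanism of the no-gap statement explicit. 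One technical caveat you should repair: you place the perturbations $\zeta$ in the dual of $C^\infty_c(\T^d\times\R^d)$ with its weak-$*$ topology, which is not first countable, so lower semicontinuity of $p$ at $0$ cannot be checked along sequences alone; your tightness/uniform-integrability argument goes through verbatim for nets (Prokhorov yields convergent subnets, and the moment bounds are uniform along the tail of any net with $p(\zeta_\alpha)$ bounded), so this is a fixable presentation issue rather than a gap in the idea, but as written the lsc step is incomplete. You also need (and implicitly use) that $\overline p$ is proper, which your observation $p\ge 0$ (legitimate since $F\ge0$ by {\bf (F2)}) supplies.
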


\begin{proof}
By definition of a closed measure we can write
\begin{align*}
& \inf_{ \mu \in \C}	\int_{\T^{d} \times \R^{d} \times \R^{d}}{\left(\frac{1}{2}|w|^{2}+F(x,v) \right)\ \mu(dx,dv,dw)} 
\\
= & \inf_{\mu \in \PP_{2, \alpha}(\T^{d} \times \R^{d} \times \R^{d})} \sup_{\varphi \in C^{\infty}_{c}(\T^{d} \times \R^{d})} \int_{\T^{d} \times \R^{d} \times \R^{d}}{\Big(\frac{1}{2}|w|^{2}+F(x,v)- \langle D_{x}\varphi(x,v),v \rangle-\langle D_{v}\varphi(x,v), w \rangle\Big)\ \mu(dx,dv,dw)}.
\end{align*}
Our aim is to use the min-max Theorem (see \Cref{thm:minmax} below). We use for this 
the notation introduced in Appendix A and set $\mathbb{A}= C^{\infty}_{c}(\T^{d} \times \R^{d})$, { $\mathbb{B}=\PP_{2, \alpha}(\T^{d} \times \R^{d} \times \R^{d})$} and for any $(\varphi, \mu) \in \mathbb{A} \times \mathbb{B}$
\begin{equation*}
\mathcal{L}(\varphi, \mu): = 	\int_{\T^{d} \times \R^{d} \times \R^{d}}{\Big(\frac{1}{2}|w|^{2}+F(x,v)-\langle D_{x}\varphi(x,v),v \rangle-\langle D_{v}\varphi(x,v), w \rangle \Big)\ \mu(dx,dv,dw)}.
\end{equation*}
Let us choose $\varphi^{*}(x,v)=0$ and 
{  
\begin{align*}
c^{*} & =1+ \inf_{\mu \in \PP_{2, \alpha}(\T^{d} \times \R^{d} \times \R^{d})} \sup_{\varphi \in C^{\infty}_{c}(\T^{d} \times \R^{d})} \int_{\T^{d} \times \R^{d} \times \R^{d}}{\Big(\frac{1}{2}|w|^{2}+F(x,v)}  
\\
&\qquad \qquad\qquad\qquad\qquad  -\ \langle D_{x}\varphi(x,v),v \rangle-\langle D_{v}\varphi(x,v), w \rangle \Big)\ \mu(dx,dv,dw).	
\end{align*} }
Note that $c^{*}$ is finite (since it is bounded below by assumption \cref{eq:Fgrowth} and bounded above for $\mu=\delta_{(x_0,0,0)}$ for any $x_0\in \T^d$). In addition, the set $\mathbb{B}^{*}=\left\{\mu \in \mathbb{B}: \mathcal{L}(\varphi^*, \mu) \leq c^{*} \right\}$ is nonempty and tight, and thus compact, in $\PP_{2, \alpha}(\T^{d} \times \R^{d} \times \R^{d})$ for the weak-$*$ convergence. Finally, we have  
\begin{align*}
 c^{*} & \geq 1+ \sup_{\varphi \in C^{\infty}_{c}(\T^{d} \times \R^{d})}\inf_{\mu \in \PP_{2, \alpha}(\T^{d} \times \R^{d} \times \R^{d})} \int_{\T^{d} \times \R^{d} \times \R^{d}}{\Big(\frac{1}{2}|w|^{2}+F(x,v)}
\\
&\qquad \qquad \qquad \qquad  -\ \langle D_{x}\varphi(x,v),v \rangle-\langle D_{v}\varphi(x,v), w \rangle \Big)\ \mu(dx,dv,dw).
\end{align*}
%
Therefore, the min-max Theorem \ref{thm:minmax} states that
\begin{align*}
 & \inf_{\mu \in \PP_{2, \alpha}(\T^{d} \times \R^{d} \times \R^{d})} \sup_{\varphi \in C^{\infty}_{c}(\T^{d} \times \R^{d})} \int_{\T^{d} \times \R^{d} \times \R^{d}}{\Big(\frac{1}{2}|w|^{2}+F(x,v)-  \langle D_{x}\varphi(x,v),v \rangle-\langle D_{v}\varphi(x,v), w \rangle \Big)\ \mu(dx,dv,dw)}
\\
&= \sup_{\varphi \in C^{\infty}_{c}(\T^{d} \times \R^{d})}\inf_{\mu \in \PP_{2}(\T^{d} \times \R^{d} \times \R^{d})} \int_{\T^{d} \times \R^{d} \times \R^{d}}{\Big(\frac{1}{2}|w|^{2}+F(x,v)- \langle D_{x}\varphi(x,v),v \rangle-\langle D_{v}\varphi(x,v), w \rangle \Big)\ \mu(dx,dv,dw)}
\end{align*}
\begin{align*}
 = &\sup_{\varphi \in C^{\infty}_{c}(\T^{d} \times \R^{d})} \inf_{(x,v,w) \in \T^{d} \times \R^{d} \times \R^{d}}{\left\{\frac{1}{2}|w|^{2}+F(x,v)-\langle D_{x}\varphi(x,v),v \rangle-\langle D_{v}\varphi(x,v), w \rangle \right\}}
 \\
 =& \sup_{\varphi \in C^{\infty}_{c}(\T^{d} \times \R^{d})} \inf_{(x,v) \in \T^{d} \times \R^{d} } \left\{-\frac{1}{2}|D_{v}\varphi(x,v)|^{2}-\langle D_{x}\varphi(x,v), v \rangle +F(x,v) \right\}.
\end{align*}
This complete the proof. 
\end{proof}

Next we introduce and study the discounted problem associated with \cref{eq:functional}. For any $\delta > 0$ and any $(x,v) \in \T^{d} \times \R^{d}$ we define $J_{\delta}: \Gamma \to \R\cup\{+\infty\}$ as 
\begin{equation*}
J_{\delta}(\gamma)= \int_{0}^{+\infty}{e^{-\delta t}\left(\frac{1}{2}|\ddot\gamma(t)|^{2}+F(\gamma(t), \dot\gamma(t)) \right) dt}	
\end{equation*}
if $\dot  \gamma$ is absolutely continuous with $\int_{0}^{+\infty}e^{-\delta t}\left(\frac{1}{2}|\ddot\gamma(t)|^{2}+|\dot\gamma(t))|^\alpha \right) dt<+\infty$, and $J_{\delta}(\gamma)=+\infty$ otherwise. 
We define the associated value function (the approximate corrector) 
\begin{equation}\label{eq:discounted}
V_{\delta}(x,v)=\inf_{\gamma \in \Gamma_{0}(x,v)}J_{\delta}(\gamma). 	
\end{equation}
We recall that $V_{\delta}$ is the unique continuous viscosity solution with a polynomial growth of the following Hamilton-Jacobi equation 
\begin{equation}\label{eq:discoundedHJ}
	\delta V_{\delta}(x,v)+\frac{1}{2}|D_{v}V_{\delta}(x,v)|^{2}+\langle D_{x}V_{\delta}(x,v), v \rangle = F(x,v).
\end{equation}

As the convergence of $V^T(0, \cdot,\cdot)/T$ is locally uniform (by Proposition \ref{lem.bdoscVT}), we can apply the Abelian-Tauberian Theorem of \cite{bib:MG} and we have that for any $(x,v) \in \T^{d} \times \R^{d}$
\begin{equation}\label{eq:tauberian}
\lim_{\delta \to 0^{+}} \delta V_{\delta}(x,v)=\lim_{T \to \infty} \frac{1}{T} V^{T}(0,x,v)=: \lambda.
\end{equation}

In the proof of the main result of this section (Proposition \ref{prop:caract}) we will have to smoothen the map $V^\delta$. This  involves some local regularity properties of $V^\delta$, which is the aim of the next result. 

\begin{proposition}\label{prop.reguVdelta}
	Assume that $F$ satisfies {\bf (F1)} -- {\bf (F3)}. Then, we have:
		\begin{itemize}
	\item[($i$)] $\{ \delta V_{\delta}(x,v)\}_{\delta > 0}$ is locally uniformly bounded;
	\item[($ii$)] $\{ V_{\delta}(x,v)\}_{\delta >0}$ has locally uniformly bounded oscillation, i.e. there exists a constant $M(R) \geq 0 $ such that for any $(x_{0}, v_{0}) , (x,v) \in \T^{d} \times \overline{B}_{R}$
	\begin{equation*}
	V_{\delta}(x,v) - V_{\delta}(x_{0},v_{0}) \leq M(R).
	\end{equation*}
	\item[($iii$)] there exists a constant $\tilde{C} \geq 0$ such that for any $(x,v) \in \T^{d} \times \R^{d}$
	\begin{equation}\label{eq:est2}
	\tilde{C}^{-1}|v|^{\alpha}-\tilde{C}\delta^{-1}\leq V_{\delta}(x,v) \leq c_F{\color{red} \delta^{-1}} (|v|^\alpha+1);	
	\end{equation}
\item[($iv$)] the map $x \mapsto V_{\delta}(x,v)$ is locally Lipschitz continuous and there exists a constant $C_{\delta} \geq 0$ such that for a.e. $(x,v) \in \T^{d} \times \R^{d}$ the following holds:
\begin{equation}\label{eq:est3}
	|D_{x}V_{\delta}(x,v)| \leq C_{\delta}(1+|v|^{\alpha}).
\end{equation}
	   \end{itemize}
	  \end{proposition}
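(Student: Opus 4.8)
The plan is to establish the four properties more or less in the order listed, since the later ones build on the earlier. For $(i)$: the lower bound $\delta V_\delta(x,v)\geq -c$ on bounded sets follows immediately from $F\geq 0$ (so $J_\delta(\gamma)\geq 0$); the upper bound follows by testing $V_\delta$ with the straight line $\xi(t)=x+tv$, exactly as in \Cref{lem:upperbo}, giving $\delta V_\delta(x,v)\leq \delta\int_0^{+\infty}e^{-\delta t}F(x+tv,v)\,dt\leq c_F(1+|v|^\alpha)$, hence a local bound. For $(ii)$: I would mimic the proof of \Cref{lem.bdoscVT}. Given $(x,v)$ and $(x_0,v_0)$ in $\T^d\times\overline B_R$, take a near-optimal trajectory $\gamma^*$ for $V_\delta(x_0,v_0)$, prepend to it the connecting curve $\sigma:[0,1]\to\T^d$ from \Cref{lem:reachablecurve} with $\sigma(0)=x$, $\dot\sigma(0)=v$, $\sigma(1)=x_0$, $\dot\sigma(1)=v_0$, and reparametrize. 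The discount factor only helps (the tail beyond time $1$ is weighted by $e^{-\delta t}\leq 1$ and the shift costs at most a factor), so one gets $V_\delta(x,v)-V_\delta(x_0,v_0)\leq \int_0^1 e^{-\delta t}(\tfrac12|\ddot\sigma|^2+F(\sigma,\dot\sigma))\,dt\leq C_2(R^2+R^\alpha)=:M(R)$, using $F\geq 0$ on the remaining part and \eqref{eq:boundsigma}.

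For $(iii)$: the upper bound is again the straight-line competitor, $V_\delta(x,v)\leq\int_0^{+\infty}e^{-\delta t}F(x+tv,v)\,dt\leq c_F\delta^{-1}(1+|v|^\alpha)$. For the lower bound, let $\gamma\in\Gamma_0(x,v)$ be (nearly) optimal for $V_\delta(x,v)$; by {\bf (F2)} one has $J_\delta(\gamma)\geq\int_0^{+\infty}e^{-\delta t}\big(\tfrac1{c_F}|\dot\gamma(t)|^\alpha-c_F\big)\,dt\geq \tfrac1{c_F}\int_0^{+\infty}e^{-\delta t}|\dot\gamma(t)|^\alpha\,dt-c_F\delta^{-1}$. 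Since $\dot\gamma(0)=v$ and $\ddot\gamma\in L^2$ with $\int_0^{+\infty}e^{-\delta t}\tfrac12|\ddot\gamma|^2\,dt\leq J_\delta(\gamma)$, the velocity cannot drop from $|v|$ to $0$ instantaneously: on a time interval $[0,\tau]$ with $\tau$ of order a universal constant, $|\dot\gamma(t)|\geq|v|/2$ unless a definite amount of acceleration energy has been spent, so either way $J_\delta(\gamma)\geq\tilde C^{-1}|v|^\alpha-\tilde C\delta^{-1}$, which gives the claim. (The quantitative version of this "velocity cannot change too fast without paying" is the one place a short separate computation is needed; it is a one-dimensional Cauchy–Schwarz argument.)

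For $(iv)$: this is the local Lipschitz regularity in $x$ of the viscosity solution $V_\delta$ of \eqref{eq:discoundedHJ}. The cleanest route is the control-theoretic one: fix $(x,v)$ and $(x',v)$ with the same velocity, in $\T^d\times\overline B_R$; given a near-optimal $\gamma$ for $V_\delta(x,v)$, build a competitor $\gamma'$ for $V_\delta(x',v)$ by a small translation of $\gamma$ over a time window $[0,T_0]$ (with $T_0$ fixed) interpolating from $x'$ back onto $\gamma$, keeping the same velocity endpoints; the extra cost is $O(|x-x'|)$ times a constant depending on $\delta$ and on the local bounds already obtained in $(i)$--$(iii)$ together with {\bf (F3)} to control the variation of $F$ along the translated curve. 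Letting $|x-x'|\to0$ yields the local Lipschitz bound, and by standard Hamilton–Jacobi calculus (differentiating \eqref{eq:discoundedHJ} a.e., or directly from the control representation) the gradient bound \eqref{eq:est3} follows, with $C_\delta$ absorbing the $\delta^{-1}$ from $(iii)$.

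The main obstacle is part $(iv)$: unlike $(i)$--$(iii)$, which are soft consequences of the growth of $F$ and the construction in \Cref{lem:reachablecurve}, the Lipschitz estimate in $x$ genuinely uses the second-order (acceleration) structure — one must perturb the \emph{position} while respecting the constraint that the control acts only on the acceleration, so the naive "translate the trajectory" trick requires a careful interpolation on a boundary layer in time, and the resulting constant $C_\delta$ inevitably blows up as $\delta\to0$ (consistent with the fact, emphasized in the introduction, that no Lipschitz corrector is expected in the limit). Keeping track of the dependence on $\delta$, $R$, and the {\bf (F3)} constant $C_F$ throughout is the delicate bookkeeping here.
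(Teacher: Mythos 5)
Your plan is correct and reaches all four estimates, but on the two substantive items it takes a different route from the paper, and in one place for a mistaken reason. For ($i$) and ($ii$) you essentially reproduce the paper's argument (straight-line competitor; concatenation with the curve of Lemma \ref{lem:reachablecurve}), and your observation that the tail contributes $e^{-\delta}V_\delta(x_0,v_0)\leq V_\delta(x_0,v_0)$ because $V_\delta\geq 0$ is a slight simplification of the paper's $|e^{-\delta}-1|V_\delta(x_0,v_0)\leq \delta V_\delta(x_0,v_0)$ step. For ($iii$) the paper argues on the PDE \eqref{eq:discoundedHJ}: it exhibits $Z(x,v)=M_1^{-1}|v|^\alpha-M_2\delta^{-1}$ as a subsolution and $c_F\delta^{-1}(|v|^\alpha+1)$ as a supersolution (using $2(\alpha-1)\leq\alpha$) and invokes comparison; you instead prove the lower bound variationally via the dichotomy ``either $|\dot\gamma|\geq |v|/2$ on a unit time interval, or by Cauchy--Schwarz $\int|\ddot\gamma|^2\gtrsim|v|^2$''. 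Your argument is sound (it uses $\alpha\leq 2$ and an upper bound on $\delta$, exactly as the paper's barrier computation implicitly does), is more elementary, and avoids relying on a comparison principle on the $v$-unbounded domain; the paper's version is shorter once comparison is granted. For ($iv$) the paper's proof is much simpler than what you propose: since part ($iv$) only perturbs the position at fixed initial velocity, one can take an optimal $\gamma^*$ for $V_\delta(x,v)$ and use the \emph{whole translated curve} $\gamma^*+h$ as a competitor for $V_\delta(x+h,v)$ — a constant shift in position leaves $\dot\gamma^*$ and $\ddot\gamma^*$ unchanged, so the acceleration constraint poses no obstruction at all, and {\bf (F3)} together with the energy bound $\int_0^\infty e^{-\delta t}|\dot\gamma^*|^\alpha dt\leq C_\delta(1+|v|^\alpha)$ (from ($iii$) and {\bf (F2)}) gives \eqref{eq:est3} in two lines. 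Your boundary-layer interpolation onto $\gamma$ over $[0,T_0]$ does work and yields the same estimate after controlling the cross term $\int e^{-\delta t}\langle \ddot\gamma, h\,\ddot\chi\rangle$ by the same energy bound, but the stated reason for introducing it (that the naive translation is incompatible with control on the acceleration) is not correct; the interpolation would only be needed if you were perturbing the initial velocity.
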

	
	\begin{proof}
	\begin{itemize}
	\item[($i$)] 	 
	Fix $(x,v) \in \T^{d} \times \overline{B}_{R}$ and define a competitor $\gamma: [0,+\infty] \to \T^{d}$ such that $\gamma(t)=x+tv$. By definition and \cref{eq:Fgrowth} we get
	\begin{equation*}
		\delta V_{\delta}(x,v) \leq \delta \int_{0}^{\infty}{e^{-\delta t} F(\gamma(t), \dot\gamma(t))\ ds} \leq c_{F}(1+|v|^{\alpha})\; \leq \; c_{F}(1+R^{\alpha}).
	\end{equation*}
On the other hand,  we have by {\bf (F2)} that $F \geq 0$ and thus $V_\delta\geq0$, which completes the proof of ($i$).

\item[($ii$)]   Let $(x_{0},v_{0})$, $(x,v) \in \T^{d} \times \overline{B}_{R}$ be fixed points, let $\gamma^{*}$ be a minimizer for $V_\delta(x_0,v_0)$ and let $\sigma$ be defined as in \Cref{lem:reachablecurve} such that  $\sigma(0)=x$, $\dot\sigma(0)=v$ and $\sigma(1)=x_{0}$, $\dot\sigma(1)=v_{0}$. 
We define a new curve $\gamma: [0,+\infty) \to \T^{d}$ as follows
\begin{align*}
\gamma(t)=
\begin{cases}
	\sigma(t), & \quad t \in [0, 1]
	\\
	\gamma^{*}(t-1), & \quad t \in (1, +\infty).
\end{cases}
\end{align*}
Then 
	\begin{align}\label{eq:01}
	\begin{split}
	 V_{\delta}(x,v)-V_{\delta}(x_{0}, v_{0})\leq &\int_{0}^{1}{e^{-\lambda t}\left(\frac{1}{2}|\ddot\gamma(t)|^{2}+F(\gamma(t), \dot\gamma(t)) \right)\ dt}
	\\
	+ & \int_{1}^{+\infty}{e^{-\lambda t}\left(\frac{1}{2}|\ddot\gamma(t)|^{2} + F(\gamma(t), \dot\gamma(t))	\right)\ dt} -  V_{\lambda}(x_{0}, v_{0}).
	\end{split}
	\end{align}
By a change of variable, we have that
\begin{align*}
	& \int_{1}^{+\infty}{e^{-\delta t}\left(\frac{1}{2}|\ddot\gamma^{*}(t)|^{2} + F(\gamma^{*}(t), \dot\gamma^{*}(t))	\right)\ dt} 
	\\
	= &  e^{-\delta} \int_{0}^{\infty}{e^{-\delta s}\left(\frac{1}{2}|\ddot\gamma^{*}(s)|^{2} + F(\gamma^{*}(s), \dot\gamma^{*}(s)) \right)\ ds}=e^{-\delta} V_{\delta}(x_{0}, v_{0}). 
\end{align*}
	Therefore, we obtain that 
	\begin{align}\label{eq:1}
	\begin{split}
		& \left|\int_{1}^{+\infty}{e^{-\delta t}\left(\frac{1}{2}|\ddot\gamma(t)|^{2} + F(\gamma(t), \dot\gamma(t))	\right)\ dt} -V_{\delta}(x_{0}, v_{0})\right| \leq \left|e^{-\delta}-1\right|V_{\delta}(x_{0}, v_{0})
		\\
		& \qquad \leq\ \delta |V_{\delta}(x_{0}, v_{0})| \leq\  c_F(1+R^{\alpha}),
		\end{split}
	\end{align}
where the last inequality holds true by  ($i$). Moreover, by construction of $\sigma$ in \Cref{lem:reachablecurve} we have that
	\begin{align}\label{eq:11}
	\int_{0}^{1}{e^{-\delta t}\left(\frac{1}{2}|\ddot\sigma(t)|^{2}+F(\sigma(t), \dot\sigma(t)) \right)\ dt} \leq  J^{1}(\sigma) \leq C_{2}(R^2+R^\alpha). 
	\end{align}
	Combining together inequality \cref{eq:1} and \cref{eq:11} in \cref{eq:01} we get (ii):
	\begin{equation*}
	V_{\delta}(x,v)-V_{\delta}(x_{0}, v_{0}) \leq c_{F}(1+R^{\alpha})+C_{2}(R^2+R^\alpha)=:M(R).
	\end{equation*}

\item[($iii$)] For some constants $M_{1}$ and $M_{2}$ we have that the map $Z : \T^{d} \times \R^{d} \to \R$ such that $Z(x,v)=M_{1}^{-1}|v|^{\alpha}-M_{2}\delta^{-1}$ is a subsolution of \cref{eq:discoundedHJ}, indeed
	\begin{align*}
	& \delta Z(x,v) + \frac{1}{2}|D_{v}Z(x,v)|^{2}+\langle D_{x}Z(x,v), v \rangle - F(x,v) 
	\\
	& \qquad \leq  \delta M_{1}^{-1}|v|^{\alpha}-M_{2}+\frac{1}{2}M_{1}^{-2}\alpha^{2}|v|^{2(\alpha - 1)}	-c_{F}^{-1}|v|^{\alpha}+c_{F}.
	\end{align*}
As $2(\alpha-1) \leq \alpha$, since $\alpha \in (1, 2]$, we get, for $M_1$ and $M_2$ large enough,
\begin{equation*}
\delta Z(x,v) + \frac{1}{2}|D_{v} Z(x,v)|^{2}+\langle D_{x}Z(x,v), v \rangle -F(x,v) \leq 0.	
\end{equation*}
By comparison  we obtain $V_{\delta} \geq Z$, which proves the first inequality in \cref{eq:est2}.   

{ In the same way, considering the map $Z(x,v)=c_F\delta^{-1}(|v|^{\alpha}+1)$, we have 
	\begin{align*}
	& \delta Z(x,v) + \frac{1}{2}|D_{v}Z(x,v)|^{2}+\langle D_{x}Z(x,v), v \rangle - F(x,v) 
	\\
	& \qquad \geq   c_F(|v|^{\alpha}+1)+\frac{1}{2}\delta^{-2}(c_F\alpha)^{2}|v|^{2(\alpha - 1)}	-c_{F}|v|^{\alpha}-c_{F} \ \geq \ 0,
	\end{align*}
	so that $Z$ is a supersolution. By comparison we conclude that the second inequality in \eqref{eq:est2} holds. 
}

\item[($iv$)] 
Let $\gamma^{*}$ be optimal for $V_\delta(x,v)$ and let $h \in \R^{d}$. Then
\begin{align}\label{eq:devidelta}
\begin{split}
 V_{\delta}(x+h, v) & \leq \int_{0}^{+\infty}{e^{-\delta t} \left(\frac{1}{2}|\ddot\gamma^{*}(t)|^{2}+F(\gamma^{*}(t)+h, \dot\gamma^{*}(t)) \right)\ dt}
\\
& \leq\  V_{\delta}(x,v)+\int_{0}^{+\infty}{e^{-\delta t}\left(F(\gamma^{*}(t)+h, \dot\gamma^{*}(t))-F(\gamma^{*}(t), \dot\gamma^{*}(t)) \right)\ dt}	
\\
& \leq\  V_{\delta}(x,v) + \int_{0}^{+\infty}{e^{-\delta t}c_{F}(1+|\dot\gamma^{*}(t)|^{\alpha})|h|\ dt},
\end{split}
\end{align}
where the last inequality holds true by assumption {\bf (F3)}. Moreover, by \cref{eq:est2} we deduce that there exists a constant $C_{\delta} \geq 0$ such that
\begin{equation*}
\int_{0}^{+\infty}{e^{-\delta t}(c_F^{-1}|\dot\gamma^{*}(t)|^{\alpha}-c_F)\ dt} \leq  V_\delta(x,v)\leq C_{\delta}(1+|v|^{\alpha}).	
\end{equation*}
Therefore,  by \cref{eq:devidelta} we deduce that
\begin{equation*}
V_{\delta}(x+h,v)-V_{\delta}(x,v)\leq C_{\delta}(1+|v|^{\alpha})|h|,
\end{equation*}
which implies that $V_{\delta}$ is locally Lipschitz continuous in space and  proves (iv). 
	\end{itemize}
\end{proof}

We now strengthen a little the convergence in \eqref{eq:tauberian}: 

\begin{proposition}\label{prop:Inflemma}
Assume that $F$ satisfies {\bf (F1)}---{\bf (F3)}. Then
\begin{equation*}
\lambda=\lim_{\delta \to 0^{+}}  \inf_{(x,v) \in \T^{d} \times \R^{d}} \delta V_{\delta}(x,v),
\end{equation*}
with $\lambda$ defined in \cref{eq:tauberian}. 
\end{proposition}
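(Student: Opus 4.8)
The plan is to establish the two inequalities $\limsup_{\delta\to 0^+}\inf_{(x,v)\in\T^d\times\R^d}\delta V_\delta(x,v)\le\lambda$ and $\liminf_{\delta\to 0^+}\inf_{(x,v)\in\T^d\times\R^d}\delta V_\delta(x,v)\ge\lambda$ separately. The first is immediate: for any fixed $(x_0,v_0)\in\T^d\times\R^d$ one has $\inf_{(x,v)}\delta V_\delta(x,v)\le\delta V_\delta(x_0,v_0)$, and the right-hand side tends to $\lambda$ by \eqref{eq:tauberian}.

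The heart of the proof is the reverse inequality, and the key point I want to prove is that the infimum of $\delta V_\delta$ is attained on a compact set independent of $\delta$. By the lower bound in \eqref{eq:est2} one has $\delta V_\delta(x,v)\ge\tilde C^{-1}\delta|v|^\alpha-\tilde C\to+\infty$ as $|v|\to+\infty$; since $V_\delta$ is continuous and $\T^d$ is compact, $\delta V_\delta$ attains its minimum over $\T^d\times\R^d$ at some point $(x_\delta,v_\delta)$, and this minimum is at most $c_F$ (indeed $\delta V_\delta(x_\delta,v_\delta)\le\delta V_\delta(x_\delta,0)\le c_F$ by Proposition \ref{prop.reguVdelta}($i$)). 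Since $(x_\delta,v_\delta)$ is a global minimum of $V_\delta$, the constant function $\varphi\equiv V_\delta(x_\delta,v_\delta)$ touches $V_\delta$ from below at that point; plugging $\varphi$ into the viscosity supersolution inequality for \eqref{eq:discoundedHJ}, in which the gradient terms drop out because $D\varphi\equiv 0$, gives $\delta V_\delta(x_\delta,v_\delta)\ge F(x_\delta,v_\delta)$. Combining this with assumption {\bf (F2)} yields
\[
\tfrac1{c_F}|v_\delta|^\alpha-c_F\le F(x_\delta,v_\delta)\le\delta V_\delta(x_\delta,v_\delta)\le c_F ,
\]
so that $|v_\delta|\le\bar R:=(2c_F^2)^{1/\alpha}$ for every $\delta>0$.

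With all the minimizers confined to $\T^d\times\overline{B}_{\bar R}$, the conclusion follows by a soft argument: applying the bounded-oscillation estimate of Proposition \ref{prop.reguVdelta}($ii$) to the two points $(0,0)$ and $(x_\delta,v_\delta)$, which both lie in $\T^d\times\overline{B}_{\bar R}$, gives $V_\delta(0,0)-V_\delta(x_\delta,v_\delta)\le M(\bar R)$, whence
\[
\inf_{(x,v)\in\T^d\times\R^d}\delta V_\delta(x,v)=\delta V_\delta(x_\delta,v_\delta)\ge\delta V_\delta(0,0)-\delta M(\bar R) .
\]
Letting $\delta\to 0^+$ and invoking \eqref{eq:tauberian} at $(0,0)$ gives $\liminf_{\delta\to 0^+}\inf_{(x,v)}\delta V_\delta(x,v)\ge\lambda$, and together with the first inequality this proves the statement. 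The only genuinely delicate step is the uniform-in-$\delta$ bound $|v_\delta|\le\bar R$: the a priori estimates on $V_\delta$ only control it at the scale $\delta^{-1}$, so one has to extract the extra information carried by the equation itself at the minimum point; everything afterwards is compactness together with the oscillation estimate.
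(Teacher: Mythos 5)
Your proof is correct, but it follows a genuinely different route from the paper's. You localize the global minimizer $(x_\delta,v_\delta)$ of $V_\delta$ (which exists by continuity and the coercive lower bound in \eqref{eq:est2}) by testing the viscosity supersolution property of \eqref{eq:discoundedHJ} with a constant function at the minimum, obtaining $\delta V_\delta(x_\delta,v_\delta)\ge F(x_\delta,v_\delta)$ and hence, via {\bf (F2)} and the upper bound $\delta V_\delta(x_\delta,v_\delta)\le c_F$, the $\delta$-independent confinement $|v_\delta|\le (2c_F^2)^{1/\alpha}$; you then transfer the Tauberian limit \eqref{eq:tauberian} at the single point $(0,0)$ to the infimum through the uniform oscillation bound of \Cref{prop.reguVdelta}($ii$). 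The paper argues instead along trajectories: it fixes $R$ with $c_F^{-1}R^\alpha-c_F>\lambda$, applies the dynamic programming principle to an optimal curve for $V_\delta(x,v)$ up to the first time its velocity enters $B_R$, uses that $F>\lambda$ while the velocity stays outside $B_R$, and invokes the locally uniform convergence $\delta V_\delta\to\lambda$ on $\T^d\times B_R$ to bound the value at the entry point; no minimum point or supersolution test is needed. Your approach is more ``static'' and only requires pointwise convergence at one point plus the oscillation estimate, whereas the paper's stays entirely at the level of the variational definition and DPP, so it does not rely on attainment of the minimum or on the viscosity formulation. Two small remarks: the bound $\delta V_\delta(x_\delta,0)\le c_F$ is cleanest quoted from the second inequality of \eqref{eq:est2} (or from the straight-line competitor in the proof of ($i$)), since ($i$) as stated only asserts local uniform boundedness; and since the transport term in \eqref{eq:discoundedHJ} carries a sign that differs from the finite-horizon equation earlier in the paper, it is worth noting that your key inequality is insensitive to this (the test function is constant), or alternatively it can be derived directly from the DPP at the minimum point.
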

\begin{proof} { First we note that, by (i) in Proposition \ref{prop.reguVdelta}, the convergence in \eqref{eq:tauberian} is locally uniform. Fix  $R \geq 0$  such that
\begin{equation}\label{defRzjkdn}
c_{F}^{-1}R^{\alpha}-c_{F} > \lambda. 	
\end{equation}
Then,  for any $\eps> 0$, there exists $\delta_{\eps} >0 $ such that for any $\delta \in(0, \delta_{\eps})$ we have that 
\begin{equation}\label{eq:tauberianbis}
\inf_{(x,v)\in \T^d\times B_R} \delta V_{\delta} (x,v)   \geq \lambda-\eps.
\end{equation}
Fix $(x,v) \in \T^{d} \times \R^{d}$ and let $\gamma^{*}_\delta$ be a minimizer for $V_{\delta}(x,v)$. We define
\begin{align*}
\tau_\delta=
\begin{cases}
	 \inf\{t \in [0, +\infty]: |\dot\gamma^*_\delta(t)| \leq R\}, \quad & \text{if}\ \{t \in [0, +\infty]: |\dot\gamma^*_\delta(t)| \leq R\} \not= \emptyset
\\
+\infty, \quad & \text{if}\  \{t \in [0, +\infty]: |\dot\gamma^*_\delta(t)| \leq R\} = \emptyset. 
\end{cases}	
\end{align*}
	By Dynamic Programming Principle we get
	\begin{equation*}
	V_{\delta}(x,v)=\int_{0}^{\tau_\delta}{e^{-\delta t}\left(\frac{1}{2}|\ddot\gamma^{*}_\delta(t)|^{2}+F(\gamma^{*}_\delta(t), \dot\gamma^{*}_\delta(t)) \right)\ dt}	+e^{-\delta \tau_\delta}V_{\delta}(\gamma^{*}_\delta(\tau_\delta), \dot\gamma^{*}_\delta(\tau_\delta))
	\end{equation*}
and by assumption \cref{eq:Fgrowth} and  definition of $\tau_\delta$ we deduce that
\begin{equation}\label{eq:ccc}
\delta V_{\delta}(x,v) \geq (c_{F}^{-1}R^{\alpha}-c_{F})(1-e^{-\delta \tau_\delta})+e^{-\delta \tau_\delta}\delta V_{\delta}(\gamma^{*}_\delta(\tau_\delta), \dot\gamma^{*}_\delta(\tau_\delta)).
\end{equation}
If $\tau_\delta$ is finite, we have that $|\dot\gamma^*_\delta(\tau_\delta)|$ is bounded by $R$ and thus, by \eqref{defRzjkdn} and \cref{eq:tauberianbis} we deduce that for any $\delta \in(0, \delta_{\eps})$ 
\begin{equation*}
\delta V_{\delta}(x,v) \geq \lambda (1-e^{-\delta \tau_\delta})+e^{-\delta \tau_\delta}(\lambda -\eps) \geq \lambda - \eps.	
\end{equation*}
By \eqref{defRzjkdn} and \eqref{eq:ccc} the same inequality also holds  if $\tau_\delta=+\infty$. Hence, we obtain that
\begin{equation*}
\lim_{\delta \to 0^{+}}  \inf_{(x,v) \in \T^{d} \times \R^{d}} \delta V_{\delta}(x,v) \geq \lambda - \eps.	
\end{equation*}
By \cref{eq:tauberian} we infer that 
\begin{equation*}
\lambda = \lim_{\delta \to 0^{+}} \delta V_{\delta}(0,0) \geq 	\lim_{\delta \to 0^{+}}  \inf_{(x,v) \in \T^{d} \times \R^{d}} \delta V_{\delta}(x,v) \geq \lambda - \eps,
\end{equation*}
which implies the desired result since $\eps$ is arbitrary. 
}\end{proof}

%
As $V_\delta$ is coercive, we cannot use it directly as a test function to test the fact that a measure is closed. To overcome this issue we approximate $V_\delta$ by family of Lipschitz maps $(V_\delta^R)$. 

\begin{lemma}[{\bf Approximate problem 1}]\label{lem:approx1}
	Assume that $F$ satisfies assumption {\bf (F1)}---{\bf (F3)}. Let $R>0$ and define $F_{R}(x,v)=\min\{F(x,v), R\}$ for any $(x,v) \in \T^{d} \times \R^{d}$. 
	Let $V_{\delta}^{R}$ be the unique continuous and bounded viscosity solution to
	\begin{equation}\label{eq:problem12}
	\delta V_{\delta}^{R}(x,v)+\frac{1}{2}|D_{v} V_{\delta}^{R}(x,v)|^{2}+\langle D_{x}V^{R}_{\delta}(x,v),v \rangle =F_{R}(x,v), \quad (x,v) \in \T^{d} \times \R^{d}. 
	\end{equation}
Then, the following holds:
\begin{itemize}
\item[(i)] $V_{\delta}^{R}$ is globally Lipschitz continuous;
\item[(ii)] there are two positive constants $\tilde{c}_{1,\delta}$ and $\tilde{c}_{2, \delta}$ such that
\begin{equation}\label{eq:estest}
\delta V_{\delta}^{R}(x,v) \geq \tilde{c}_{1,\delta}\big(1+\min\{|v|^{\alpha}, R\} \big)	-\tilde{c}_{2, \delta}
\end{equation}
for any $(x,v) \in \T^{d} \times \R^{d}$;
\item[(iii)] there is a constant $\tilde{C}_{\delta} \geq 0$ such that
 \begin{equation}\label{eq:estest2}
 	|D_{x} V_{\delta}^{R}(x,v)| \leq \tilde{C}_{\delta} \big( 1 + \min\{|v|^{\alpha}, R \} \big)
 \end{equation}
for a.e. $(x,v) \in \T^{d} \times \R^{d}$;
\item[(iv)] $V_{\delta}^{R}$ converge, as $R \to +\infty$, uniformly on compact subsets of $\T^{d} \times \R^{d}$ to the map $V_{\delta}$ defined in \eqref{eq:discounted}.
\end{itemize}
\end{lemma}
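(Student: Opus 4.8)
\emph{Setup.} Throughout I use the control representation
\[
V_\delta^R(x,v)=\inf_{\gamma\in\Gamma_0(x,v)}\int_0^{+\infty}e^{-\delta t}\Big(\tfrac12|\ddot\gamma(t)|^2+F_R(\gamma(t),\dot\gamma(t))\Big)\,dt,
\]
which holds since $V_\delta^R$ is the unique bounded viscosity solution of \eqref{eq:problem12}; minimizers exist because $\gamma\mapsto\int_0^{+\infty}e^{-\delta t}\tfrac12|\ddot\gamma|^2\,dt$ is coercive once $(\gamma(0),\dot\gamma(0))$ is fixed, while $F_R$ is bounded and nonnegative. I will use two elementary bounds repeatedly: testing with $\gamma(t)=x+tv$ gives $0\le V_\delta^R(x,v)\le c_F\delta^{-1}(1+\min\{|v|^\alpha,R\})$; and since {\bf (F2)} yields $F_R(x,v)\ge \tfrac1{c_F}\min\{|v|^\alpha,R\}-c_F$, every minimizer $\gamma^*$ of $V_\delta^R(x,v)$ satisfies $\int_0^{+\infty}e^{-\delta t}\min\{|\dot\gamma^*(t)|^\alpha,R\}\,dt\le c_F V_\delta^R(x,v)+c_F^2\delta^{-1}$.

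\emph{Points (i) and (iii).} For (i), I fix two points $z_0=(x_0,v_0)$, $z_0'=(x_0',v_0')$ at distance at most $1$, take a minimizer $\gamma^*$ for $z_0$, and build a competitor $\beta$ starting at $z_0'$ that equals $\gamma^*$ on $[1,+\infty)$ and equals $\gamma^*+e$ on $[0,1]$, where $e$ is the $\R^d$-valued cubic with $(e,\dot e)(0)=z_0'-z_0$ and $(e,\dot e)(1)=0$; then $|(e,\dot e)(t)|\le C|z_0-z_0'|$ and $\int_0^1|\ddot\beta-\ddot\gamma^*|^2\le C|z_0-z_0'|^2$. Expanding $\tfrac12|\ddot\beta|^2=\tfrac12|\ddot\gamma^*|^2+\langle\ddot\gamma^*,\ddot\beta-\ddot\gamma^*\rangle+\tfrac12|\ddot\beta-\ddot\gamma^*|^2$, using Cauchy--Schwarz with $\int_0^1|\ddot\gamma^*|^2\le 2e^\delta V_\delta^R(z_0)$, and using that $F_R$ is globally Lipschitz --- here {\bf (F3)} enters, since $DF_R=0$ on $\{F>R\}$ and $|DF_R|=|DF|\le C_F(1+|v|^\alpha)$ on $\{F<R\}$, where by {\bf (F2)} $|v|$ is bounded by a function of $R$ --- one gets $V_\delta^R(z_0')-V_\delta^R(z_0)\le C_{\delta,R}|z_0-z_0'|$; symmetry and chaining give global Lipschitz continuity. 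Point (iii) uses the same idea with a minimizer $\gamma^*$ for $V_\delta^R(x,v)$ and the spatially translated competitor $\gamma^*+h$: since for fixed $v'$ the map $y\mapsto F_R(y,v')$ is Lipschitz with constant at most $C(1+\min\{|v'|^\alpha,R\})$, this yields $V_\delta^R(x+h,v)-V_\delta^R(x,v)\le C|h|\int_0^{+\infty}e^{-\delta t}(1+\min\{|\dot\gamma^*(t)|^\alpha,R\})\,dt$, and the second elementary bound together with the upper bound on $V_\delta^R$ controls the integral by $\tilde C_\delta(1+\min\{|v|^\alpha,R\})$; exchanging $x$ and $x+h$ gives (iii).

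\emph{Point (ii).} Let $\gamma^*$ be a minimizer for $V_\delta^R(x,v)$ and write $m=\min\{|v|^\alpha,R\}$; I may assume $m\ge1$, since if $\tilde c_{2,\delta}\ge 2\tilde c_{1,\delta}$ the asserted inequality is trivially true when $m<1$. If $|\dot\gamma^*(t)|^\alpha\ge 2^{-\alpha}m$ for all $t\in[0,1]$, then {\bf (F2)} gives $F_R(\gamma^*,\dot\gamma^*)\ge \tfrac1{c_F}2^{-\alpha}m-c_F$ on $[0,1]$ (the truncation at level $R$ is harmless because this right-hand side never exceeds $R$), so $\delta V_\delta^R(x,v)\ge(1-e^{-\delta})\big(\tfrac1{c_F}2^{-\alpha}m-c_F\big)$. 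Otherwise there is $t_0\in[0,1]$ with $|\dot\gamma^*(t_0)|^\alpha<2^{-\alpha}m$, hence $|\dot\gamma^*(0)-\dot\gamma^*(t_0)|\ge\tfrac12 m^{1/\alpha}$, and Cauchy--Schwarz gives $\int_0^1|\ddot\gamma^*|^2\ge\tfrac14 m^{2/\alpha}\ge\tfrac14 m$ (using $\alpha\le2$ and $m\ge1$), whence $\delta V_\delta^R(x,v)\ge\tfrac{\delta e^{-\delta}}{8}m$. Taking the smaller constant in front of $m$ and rewriting $m=(1+m)-1$ yields (ii) with $\tilde c_{1,\delta},\tilde c_{2,\delta}$ depending only on $\delta$, $\alpha$, $c_F$ --- crucially \emph{not} on $R$.

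\emph{Point (iv) and the main obstacle.} Since $F_R$ is nondecreasing in $R$ with $F_R\uparrow F$, the functions $V_\delta^R$ are nondecreasing in $R$ and bounded above by $V_\delta$, so they converge pointwise to some $\bar V\le V_\delta$; it remains to show $\bar V\ge V_\delta$. Fix $(x,v)$ and minimizers $\gamma_R$ for $V_\delta^R(x,v)$. The upper bound on $V_\delta^R$, (ii) and the dynamic programming principle give, for each $S>0$, a bound $\sup_{[0,S]}|\dot\gamma_R|\le C(S)$ once $R$ is large enough (depending on $S$), while $\int_0^S|\ddot\gamma_R|^2\le 2e^{\delta S}V_\delta(x,v)$; a diagonal compactness argument produces $\gamma_R\to\gamma_\infty$ in $C^1_{\mathrm{loc}}$ with $\ddot\gamma_R\rightharpoonup\ddot\gamma_\infty$ in $L^2_{\mathrm{loc}}$ and $(\gamma_\infty,\dot\gamma_\infty)(0)=(x,v)$. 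On each $[0,S]$, for $R$ large $F_R(\gamma_R,\dot\gamma_R)=F(\gamma_R,\dot\gamma_R)\to F(\gamma_\infty,\dot\gamma_\infty)$ uniformly, and weak lower semicontinuity of $\int e^{-\delta t}\tfrac12|\cdot|^2$ gives $\int_0^S e^{-\delta t}\big(\tfrac12|\ddot\gamma_\infty|^2+F(\gamma_\infty,\dot\gamma_\infty)\big)\,dt\le\liminf_R V_\delta^R(x,v)=\bar V(x,v)$; letting $S\to+\infty$ gives $V_\delta(x,v)\le J_\delta(\gamma_\infty)\le\bar V(x,v)$, so $\bar V=V_\delta$. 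As the convergence is monotone and all functions involved are continuous (indeed $V_\delta^R$ is Lipschitz by (i)), Dini's theorem upgrades it to uniform convergence on compacta, which is (iv). The delicate point of the lemma is (ii): a direct sub/supersolution construction as in Proposition \ref{prop.reguVdelta}(iii) produces coercivity constants that deteriorate like $R$, and it is the dichotomy above --- kinetic energy is paid either because the velocity stays large or because it must drop --- that makes the constants uniform in $R$. Part (iv) is the most laborious to write out in full, but it introduces no idea beyond (ii) and standard lower semicontinuity.
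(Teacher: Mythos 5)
Your proof is correct, and it is complete in places where the paper deliberately is not: the authors only say that (i) and (iv) are ``direct consequences of optimal control theory'' and that (ii), (iii) ``follow the same argument'' as \eqref{eq:est2}, \eqref{eq:est3} in Proposition \ref{prop.reguVdelta}. Your (iii) is exactly that argument (spatial translation of a minimizer, the a.e.\ bound $|D_xF_R|\le C(1+\min\{|v|^\alpha,R\})$ on $\{F<R\}$ via {\bf (F2)}--{\bf (F3)}, and the moment bound on $\int e^{-\delta t}\min\{|\dot\gamma^*|^\alpha,R\}\,dt$ from your two elementary estimates), and your (i), (iv) (competitor curves for the Lipschitz bound; monotonicity in $R$, diagonal compactness of minimizers, lower semicontinuity, Dini) are the standard arguments being alluded to. The genuine divergence is (ii): the route the paper points to is comparison with the explicit truncated subsolution $Z(x,v)=M_1^{-1}\min\{|v|^\alpha,R\}-M_2\delta^{-1}$, and --- contrary to your closing remark --- this does yield constants independent of $R$: on $\{|v|^\alpha<R\}$ the computation is the one for \eqref{eq:est2} combined with $F_R\ge \min\{c_F^{-1}|v|^\alpha-c_F,\,R\}$, and on $\{|v|^\alpha>R\}$ the function $Z$ is constant, so one only needs $\delta M_1^{-1}R-M_2\le c_F^{-1}R-c_F$, achievable with $M_1,M_2$ depending on $\delta,\alpha,c_F$ only. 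Your variational dichotomy (either the speed stays of order $m^{1/\alpha}$ on $[0,1]$, making the running cost large, or it drops and a kinetic energy of order $m^{2/\alpha}\ge m$ must be paid) is a valid alternative that avoids the comparison principle and the identification of a.e.\ with viscosity subsolutions for convex Hamiltonians; both routes, like the paper's own proof of \eqref{eq:est2}, use $\alpha\in(1,2]$, and both deliver the $R$-uniformity of $\tilde c_{1,\delta},\tilde c_{2,\delta}$ that is indeed essential later in Lemma \ref{lem.Rapprox2}.
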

The proofs of (i) and (iv) are direct consequences of optimal control theory while the proofs of  \cref{eq:estest} and \cref{eq:estest2} follow the same argument as for  \cref{eq:est2} and \cref{eq:est3}, respectively and we omit these proofs.

\begin{lemma}\label{lem.Rapprox}
Assume that $F$ satisfies {\bf (F1)} -- {\bf (F3)}. Let $F_{R}$ and $V_{\delta}^{R}$ be defined  in \Cref{lem:approx1}. Then	we have that
\begin{equation}\label{eq:Rapprox}
\inf_{\mu \in \C} \int_{\T^{d} \times \R^{d} \times \R^{d}}{\left(\frac{1}{2}|w|^{2}+F_{R}(x,v) \right)\ \mu(dx,dv,dw)} \geq \inf_{(x,v) \in \T^{d} \times \R^{d}} \delta V_{\delta}^{R}(x,v).	
\end{equation}
\end{lemma}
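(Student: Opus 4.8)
The inequality \cref{eq:Rapprox} compares the infimum over closed measures of the functional with truncated potential $F_R$ against the infimum over the state space of $\delta V_\delta^R$. Since $F_R$ satisfies the same structural hypotheses as $F$ (it is continuous, nonnegative and bounded, so in particular {\bf (F1)}--{\bf (F3)} hold for it, with the growth exponent now irrelevant because $F_R$ is bounded), the natural route is to reproduce the argument of \Cref{thm:minmaxform} with $F$ replaced by $F_R$. This gives
\begin{equation*}
\inf_{\mu \in \C} \int_{\T^{d} \times \R^{d} \times \R^{d}}{\left(\tfrac{1}{2}|w|^{2}+F_{R}(x,v) \right)\ \mu(dx,dv,dw)} = \sup_{\varphi \in C^{\infty}_{c}(\T^{d} \times \R^{d})} \inf_{(x,v) \in \T^{d} \times \R^{d}}\left\{-\tfrac{1}{2}|D_{v}\varphi(x,v)|^{2}-\langle D_{x}\varphi(x,v), v \rangle +F_{R}(x,v)\right\}.
\end{equation*}
So it suffices to exhibit, for the right-hand side, a test function $\varphi \in C^\infty_c(\T^d\times\R^d)$ for which the inner infimum is at least $\inf_{(x,v)} \delta V_\delta^R(x,v) - \text{(small error)}$; then letting the error go to zero finishes the proof.

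The candidate test function is of course a smoothed and truncated version of $V_\delta^R$ itself, viewed as an approximate corrector: from \cref{eq:problem12}, at least formally, $-\tfrac12|D_vV_\delta^R|^2 - \langle D_xV_\delta^R,v\rangle + F_R = \delta V_\delta^R \geq \inf_{(x,v)}\delta V_\delta^R$, which is exactly the desired bound. The work is in making this rigorous: $V_\delta^R$ is only Lipschitz (part (i) of \Cref{lem:approx1}), not $C^\infty$, and it is not compactly supported. First I would multiply $V_\delta^R$ by a smooth cutoff $\chi_N(x,v)=\chi(v/N)$ supported in $\{|v|\leq 2N\}$ and equal to $1$ on $\{|v|\leq N\}$; on the region $\{|v|\leq N\}$ the truncated function agrees with $V_\delta^R$, and on the transition annulus the extra terms are controlled by $\|D V_\delta^R\|_\infty |v| /N \lesssim \tilde C_\delta(1+R)/N$ using the Lipschitz bound, which is $o(1)$ as $N\to\infty$ — but one must also check what happens on $\{|v|>2N\}$, where $\chi_NV_\delta^R \equiv 0$ and the expression reduces to $F_R(x,v)\geq 0 \geq \inf \delta V_\delta^R$ (the last inequality because $V_\delta^R\geq 0$, as $F_R\geq 0$). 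Then I would mollify $\chi_N V_\delta^R$ in $(x,v)$ at scale $\varepsilon$; since the Hamiltonian $\tfrac12|p_v|^2$ is convex in $p_v$ and the drift term $\langle D_x\varphi,v\rangle$ is linear, one gets that the mollification is a subsolution up to an error that vanishes with $\varepsilon$ (a standard fact: convolution of a viscosity subsolution of a convex equation is a subsolution of a slightly perturbed equation, with errors controlled by the modulus of continuity of $F_R$ and the size of the support in $v$, which is now compact thanks to the cutoff).

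The main obstacle is the bookkeeping in the double limit: the cutoff scale $N$ and the mollification scale $\varepsilon$ interact, since the error from mollification on the support $\{|v|\leq 2N\}$ depends on $N$ (through the modulus of continuity of $F_R$ on that set and the Lipschitz constant of $\chi_NV_\delta^R$, which inherits a contribution $\|V_\delta^R\|_{L^\infty(\{|v|\leq 2N\})}/N$ from differentiating the cutoff — and $\|V_\delta^R\|_\infty$ on that ball grows like $\tilde C_\delta(1+R^\alpha)/\delta$ by \cref{eq:est2}, but $\delta$ and $R$ are fixed here). So I would first fix $N$ large enough that the cutoff error is below $\eta/2$, then choose $\varepsilon=\varepsilon(N)$ small enough that the mollification error is below $\eta/2$, obtaining a genuine $\varphi\in C^\infty_c$ with $\inf_{(x,v)}\{-\tfrac12|D_v\varphi|^2 - \langle D_x\varphi,v\rangle + F_R\} \geq \inf_{(x,v)}\delta V_\delta^R - \eta$; since the left-hand side of \cref{eq:Rapprox} dominates this for every such $\varphi$, and $\eta>0$ is arbitrary, the claim follows. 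An alternative, possibly cleaner, route avoiding the min-max machinery would be a direct duality computation: for any closed $\mu$ and any smooth compactly supported $\varphi$, $\int(\tfrac12|w|^2+F_R)d\mu = \int(\tfrac12|w|^2 - \langle D_x\varphi,v\rangle - \langle D_v\varphi,w\rangle + F_R)d\mu \geq \int \inf_w\{\tfrac12|w|^2 - \langle D_v\varphi,w\rangle\} + \ldots = \int(-\tfrac12|D_v\varphi|^2 - \langle D_x\varphi,v\rangle + F_R)d\mu$, and then plug in the smoothed $\varphi$ and take its infimum pointwise; this bypasses \Cref{thm:minmax} entirely and I would likely present it this way.
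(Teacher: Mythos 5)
Your strategy is essentially the paper's: use a smoothed version of $V_\delta^R$ as a test function against closed measures and invoke its (approximate) subsolution property to bound the integral from below by $\inf_{(x,v)}\delta V_\delta^R$ up to vanishing errors. Indeed your ``alternative, cleaner route'' at the end is literally what the paper does: only the trivial inequality $\inf\sup\geq\sup\inf$ (equivalently, testing a fixed closed $\mu$ against a fixed $\varphi$) is used, not the full equality of \Cref{thm:minmaxform}. Be careful with your first framing, though: you cannot quote \Cref{thm:minmaxform} with $F$ replaced by $F_R$, because $F_R=\min\{F,R\}$ is bounded and therefore violates the coercive lower bound in {\bf (F2)}, which is exactly what makes the sublevel set $\mathbb{B}^*$ tight in that proof; since only the one-sided inequality is needed, this is harmless, but as stated the claim ``{\bf (F1)}--{\bf (F3)} hold for $F_R$'' is false. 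The genuinely different device is your cutoff in $v$: the paper avoids any truncation by enlarging the admissible test class to $W^{1,\infty}\cap C^\infty$ functions (\Cref{rem:lipschitzclosed}) and then mollifying $V_\delta^R$ in $x$ and in $v$ only, the global Lipschitz bound and \cref{eq:estest2} controlling the commutator error $\eps\,\tilde C_\delta(1+\min\{|v|^\alpha,R\})$ uniformly; this buys a cleaner single limit $\eps\to0$, whereas your route requires the double limit in $(N,\eps)$.

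Two points in your cutoff bookkeeping need repair. First, on $\{|v|>2N\}$ your justification ``$F_R\geq 0\geq\inf\delta V_\delta^R$'' is wrong: since $F_R\geq 0$ one has $V_\delta^R\geq0$, i.e. the opposite inequality, and $\inf\delta V_\delta^R$ need not vanish. The correct observation is that for $N$ large (depending on $R,c_F$) one has $F\geq c_F^{-1}(2N)^\alpha-c_F\geq R$ there, hence $F_R\equiv R$, while $\inf\delta V_\delta^R\leq \delta V_\delta^R(x,0)\leq \sup_yF_R(y,0)\leq R$, so the region is indeed harmless. Second, your bound for $\|V_\delta^R\|_{L^\infty(\{|v|\leq 2N\})}$ via \cref{eq:est2} is $N$-dependent (of order $N^\alpha/\delta$), so the cutoff-derivative contribution $\|V_\delta^R\|_{L^\infty(B_{2N})}/N\sim N^{\alpha-1}/\delta$ would \emph{not} vanish as $N\to\infty$ for $\alpha>1$; you must instead use the $N$-independent bound $V_\delta^R\leq R/\delta$ (immediate from $F_R\leq R$ by taking the straight-line competitor), together with the global Lipschitz bound of \Cref{lem:approx1}(i), which makes all cutoff errors $O(1/N)$; the term $-\chi_N\langle D_xV_\delta^R,v\rangle$, which grows linearly in $|v|$, should be absorbed through the equation into $\chi_N\,\delta V_\delta^R$ rather than estimated directly, and the remainder $(1-\chi_N)F_R$ handled as above. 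With these corrections your argument closes, but as written those two estimates fail.
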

\begin{proof}
Let $\xi^{1,\eps}\in C^\infty_c(\R^d)$ be such that $\supp(\xi^{1,\eps}) \subset B_{\eps}$, $\xi^{1,\eps}(x) \geq 0$ and $\int_{B_{\eps}}{\xi^{1,\eps}(x)\ dx}=1$, and define $V^{R,\eps}_{\delta}(x,v)=V_{\delta}^{R} \star_x \xi^{1,\eps} (x,v)$ where the mollification only holds in $x$. Then $V^{\eps}_{\delta}$ satisfies the following inequality in the viscosity sense
\begin{equation*}
\delta V^{\eps}_{\delta}(x,v) + \frac{1}{2}|D_{v}V^{\eps}_{\delta}(x,v)|^{2}+\langle D_{x}V^{\eps}_{\delta}(x,v),v \rangle \leq F_{R} \star \xi^{1,\eps}(x,v) \leq F_{R}(x,v)+C_{F}\eps(1+\min\{|v|^{\alpha}, R\})
\end{equation*}
where the last inequality holds true by {\bf (F3)} and the definition of $F_{R}$. 
Now, let $\xi^{2,\eps} \in C^\infty_c(\R^d)$ be such that $\supp(\xi^{2,\eps}) \subset B_{\eps}$, $\xi^{2,\eps}(v) \geq 0$ and $\int_{B_{\eps}}{\xi^{2,\eps}(v)\ dv}=1$  and define $\varphi^{\eps, \delta}_{R}(x,v)=\xi^{2,\eps} \star_v V^{R, \eps}_{\delta}(x,v)$ (where the  the mollification now only holds in $v$). 
Then, by \cref{eq:estest2} we have that 
\begin{equation*}
|\xi^{2, \eps}\star_v (\langle D_{x}V^{R, \eps}_{\delta}(x,\cdot), \cdot \rangle)(v) - \langle D_{x}\varphi^{\eps, \delta}_{R}(x,v), v \rangle | \leq \eps\|D_{x}V^{R, \eps}_{\delta} \|_{L^\infty(B_{\eps}(x,v))}\leq C_{\delta}\eps(1+\min\{|v|^{\alpha}, R\}),
\end{equation*}
 which implies that
\begin{align*}
& \delta \varphi^{\eps, \delta}_{R}(x,v) +\frac{1}{2}|D_{v}\varphi^{\eps, \delta}_{R}(x,v)|^{2} + \langle D_{x}\varphi^{\eps, \delta}_{R}(x,v), v \rangle\\
\leq\ & \delta \varphi^{\eps, \delta}_{R}(x,v) +\frac{1}{2}|D_{v}\varphi^{\eps, \delta}_{R}(x,v)|^{2} + 	\xi^{2, \eps}\star_v \langle D_{x}V^{R, \eps}_{\delta}(x,v), v \rangle +  C_{\delta}\eps(1+\min\{|v|^{\alpha}, R\}) \\
	\leq\ & F_{R} \star \xi^{2, \eps}(x,v) + C_{\delta}\eps(1+\min\{|v|^{\alpha}, R\}) \leq F_{R}(x,v) + C_{1, \delta}\eps(1+\min\{|v|^{\alpha}, R\}) 
\end{align*}
where the last inequality holds true by assumption {\bf (F3)}. 
Thus, so far we have proved that for any $(x,v) \in \T^{d} \times \R^{d}$
\begin{align}\label{eq:att1}
\begin{split}
	& \delta \varphi^{\eps, \delta}_{R}(x,v) +\frac{1}{2}|D_{v}\varphi^{\eps, \delta}_{R}(x,v)|^{2} + \langle D_{x}\varphi^{\eps, \delta}_{R}(x,v), v \rangle
\\
& \qquad \qquad \leq \  F_{R}(x,v) + C_{1, \delta}\eps(1+\min\{|v|^{\alpha}, R\}) . 
	\end{split}
\end{align}
Moreover, in view of 	\cref{eq:estest} we deduce that there exists a constant $C_{2,\delta} \geq 0$ such that for any $(x,v) \in \T^{d} \times \R^{d}$ we have that
\begin{equation}\label{eq:att2}
\delta \varphi^{\eps, \delta}_{R}(x,v) \geq C^{-1}_{2,\delta}\min\{|v|^{\alpha}, R\}-C_{2,\delta}.	
\end{equation}
We claim that for $\eps > 0$ small enough, the following holds:
\begin{align}\label{eq:claim}
\begin{split}
& \inf_{\mu \in \C}\int_{\T^{d} \times \R^{d} \times \R^{d}}{\left(\frac{1}{2}|w|^{2}+F_{R}(x,v) \right)\ d\mu(x,v,w)} 
\\
& \qquad \qquad \geq\  \inf_{(x,v) \in \T^{d} \times \R^{d}}\left(\delta \varphi^{\eps, \delta}_{R}(x,v)-C_{1,\delta}\eps\big(1+\min\{|v|^{\alpha}, R\}\big) \right). 
\end{split}
\end{align}
By \Cref{rem:lipschitzclosed} below, we can test the fact that a measure is closed by smooth and globally Lipschitz continuous maps. Let $\E(\T^{d} \times \R^{d})$ be such a  set. Then 
\begin{align*}
& \inf_{\mu \in \C}\int_{\T^{d} \times \R^{d} \times \R^{d}}{\left(\frac{1}{2}|w|^{2}+F_{R}(x,v) \right)\ \mu(dx,dv,dw)} 
\\
=\ &  \inf_{\mu \in \PP_{\alpha,2}(\T^{d} \times \R^{d} \times \R^{d})}\sup_{\psi \in \E(\T^{d} \times \R^{d})} \int_{\T^{d} \times \R^{d} \times \R^{d}}{\Big(\frac{1}{2}|w|^{2}+F_{R}(x,v) - \langle D_{x}\psi(x,v), v \rangle - \langle D_{v}\psi(x,v),w \rangle \Big)\ \mu(dx,dv,dw)}
\\
\geq &  \sup_{\psi \in \E(\T^{d} \times \R^{d})} \inf_{\mu \in \PP_{\alpha,2}(\T^{d} \times \R^{d} \times \R^{d})} \int_{\T^{d} \times \R^{d} \times \R^{d}}{\Big(\frac{1}{2}|w|^{2}+F_{R}(x,v) - \langle D_{x}\psi(x,v), v \rangle - \langle D_{v}\psi(x,v),w \rangle \Big)\ \mu(dx,dv,dw)}
\\
\geq & \inf_{\mu \in \PP_{\alpha,2}(\T^{d} \times \R^{d} \times \R^{d})} \int_{\T^{d} \times \R^{d} \times \R^{d}}{\Big(\frac{1}{2}|w|^{2}+F_{R}(x,v) - \langle D_{x}\varphi^{\eps, \delta}_{R}(x,v), v \rangle - \langle D_{v}\varphi^{\eps, \delta}_{R}(x,v),w \rangle \Big)\ \mu(dx,dv,dw)}
\\
= & \inf_{(x,v) \in \T^{d} \times \R^{d}}\left\{-\frac{1}{2}|D_{v}\varphi^{\eps, \delta}_{R}(x,v)|^{2}+F_{R}(x,v) - \langle D_{x}\varphi^{\eps, \delta}_{R}(x,v), v \rangle\right\} , 
\end{align*}
which proves  \cref{eq:claim} thanks to \cref{eq:att1}. Recalling \eqref{eq:att2}, the right hand side of \cref{eq:claim} is  coercive in $v$ uniformly in $\eps$ for $\eps$ small. As in addition $\varphi^{\eps, \delta}_{R}$ converges locally uniformly to $V_{\delta}^{R}$ as $\eps\to 0$, we obtain
\begin{equation*}
\lim_{\eps \to 0} \inf_{(x,v) \in \T^{d} \times \R^{d}} \left(\delta \varphi^{\eps, \delta}_{R}(x,v)-C_{2,\delta}\eps\big(1+\min\{|v|^{\alpha}, R\}\big) \right) = \inf_{(x,v) \in \T^{d} \times \R^{d}} \delta V_{\delta}^{R}(x,v).
\end{equation*}
So we can let $\eps\to 0$ in \eqref{eq:claim} to obtain the result. 
\end{proof}

In the proof we used the following: 
\begin{remarks}\label{rem:lipschitzclosed}\em
	Note that we can allow for a larger class of test functions in \Cref{def:closedmeasure}, i.e. $\varphi \in W^{1,\infty}(\T^{d} \times \R^{d}) \cap C^{\infty}(\T^{d} \times \R^{d})$. Indeed, let $\varphi \in W^{1,\infty}(\T^{d} \times \R^{d}) \cap C^{\infty}(\T^{d} \times \R^{d})$ and for $R > 1$ let $\xi_{R}\in C^\infty_c(\R^d)$ be such that $\xi_{R}(x,v)=1$ for $(x,v) \in \T^{d} \times B_{R}$, $\xi_{R}(x,v)=0$ for $(x,v) \in \T^{d} \times \R^{d} \backslash B_{2R}$, $0 \leq \xi_{R}(x,v) \leq 1$ for $\T^{d} \times B_{2R} \backslash B_{R}$ and there exists a constant $M \geq 0$ such that $|D\xi_{R}(x,v)| \leq MR^{-1}$ for any $(x,v) \in \T^{d} \times \R^{d}$. Set $\varphi_{R}=\varphi  \xi_{R}$. Then, we have that $\varphi_{R} \in C^{\infty}_{c}(\T^{d} \times \R^{d})$, $D\varphi_R$ is uniformly bounded and converges locally uniformly to $D\varphi$. For $\mu \in \C$ we have: 
	\begin{equation}\label{eq:approxclosed}
	\int_{\T^{d} \times \R^{d} \times \R^{d}}{\Big(\langle D_{x}\varphi_{R}(x,v), v \rangle + \langle D_{v}\varphi_{R}(x,v), w \rangle \Big)\ \mu(dx,dv,dw)}=0	.
	\end{equation}
Since $\mu \in \PP_{2, \alpha}(\T^{d} \times \R^{d} \times \R^{d})$, we can pass to the limit in \cref{eq:approxclosed} as $R\to +\infty$ by dominate convergence. This  proves that
\begin{equation*}
	\int_{\T^{d} \times \R^{d} \times \R^{d}}{\Big(\langle D_{x}\varphi(x,v), v \rangle + \langle D_{v}\varphi(x,v), w \rangle \Big)\ \mu(dx,dv,dw)}=0	
\end{equation*}
  for $\varphi \in W^{1,\infty}(\T^{d} \times \R^{d}) \cap C^{\infty}(\T^{d} \times \R^{d})$.\qed
\end{remarks}

{

In the next step, we let $R\to +\infty$ in \eqref{eq:Rapprox}: 

\begin{lemma}\label{lem.Rapprox2}
Assume that $F$ satisfies {\bf (F1)} -- {\bf (F3)}. Let $V_{\delta}$ be defined  in \eqref{eq:discounted}. Then
\begin{equation}\label{eq:Rapprox2}
\inf_{\mu \in \C} \int_{\T^{d} \times \R^{d} \times \R^{d}}{\left(\frac{1}{2}|w|^{2}+F(x,v) \right)\ \mu(dx,dv,dw)} \geq \inf_{(x,v) \in \T^{d} \times \R^{d}} \delta V_{\delta}(x,v).	
\end{equation}
\end{lemma}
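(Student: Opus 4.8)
The plan is to pass to the limit $R\to+\infty$ in the inequality \eqref{eq:Rapprox} of \Cref{lem.Rapprox}, namely
\[
\inf_{\mu \in \C} \int_{\T^{d} \times \R^{d} \times \R^{d}}{\left(\tfrac{1}{2}|w|^{2}+F_{R}(x,v) \right)\ \mu(dx,dv,dw)} \geq \inf_{(x,v) \in \T^{d} \times \R^{d}} \delta V_{\delta}^{R}(x,v),
\]
using that $F_R\uparrow F$ pointwise and that, by (iv) of \Cref{lem:approx1}, $V_\delta^R\to V_\delta$ locally uniformly.

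For the right-hand side, I would argue that $\liminf_{R\to+\infty}\inf_{(x,v)}\delta V_\delta^R(x,v)\geq \inf_{(x,v)}\delta V_\delta(x,v)$. The point is that the infima on the left are actually attained (or nearly attained) on a fixed compact set independent of $R$: by the uniform-in-$R$ coercivity estimate \eqref{eq:estest}, $\delta V_\delta^R(x,v)\geq \tilde c_{1,\delta}(1+\min\{|v|^\alpha,R\})-\tilde c_{2,\delta}$, so as soon as $R$ exceeds the (finite) value $\inf_{(x,v)}\delta V_\delta(x,v)$ — recall $\delta V_\delta$ is bounded below by \eqref{eq:est2} — the infimum of $\delta V_\delta^R$ over all of $\T^d\times\R^d$ equals its infimum over a ball $\T^d\times \overline B_{R_0}$ with $R_0$ independent of $R$ (for $R$ large). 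On that fixed ball, local uniform convergence $V_\delta^R\to V_\delta$ gives $\inf_{\T^d\times\overline B_{R_0}}\delta V_\delta^R\to \inf_{\T^d\times\overline B_{R_0}}\delta V_\delta$, and for $R_0$ large this last quantity equals $\inf_{\T^d\times\R^d}\delta V_\delta$ by the same coercivity argument applied to $V_\delta$ itself via \eqref{eq:est2}. Hence $\liminf_R \inf_{(x,v)}\delta V_\delta^R(x,v)\geq \inf_{(x,v)}\delta V_\delta(x,v)$.

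For the left-hand side, I would fix an arbitrary $\mu\in\C$ and apply monotone convergence: since $0\le F_R\le F$ and $F_R\uparrow F$ pointwise, and $\int(\tfrac12|w|^2)\,d\mu<+\infty$ because $\mu\in\PP_{\alpha,2}$, we get
\[
\int{\left(\tfrac{1}{2}|w|^{2}+F_{R}(x,v) \right)\,\mu(dx,dv,dw)}\ \longrightarrow\ \int{\left(\tfrac{1}{2}|w|^{2}+F(x,v) \right)\,\mu(dx,dv,dw)}.
\]
Since $\inf_{\mu\in\C}\int(\tfrac12|w|^2+F_R)\,d\mu\le \int(\tfrac12|w|^2+F_R)\,d\mu$ for this fixed $\mu$ and the right side converges as above, taking $\liminf_{R}$ and then the infimum over $\mu\in\C$ yields
\[
\limsup_{R\to+\infty}\ \inf_{\mu\in\C}\int{\left(\tfrac{1}{2}|w|^{2}+F_{R}\right)\,d\mu}\ \le\ \inf_{\mu\in\C}\int{\left(\tfrac{1}{2}|w|^{2}+F\right)\,d\mu}.
\]
(Here one only needs an upper bound on the left-hand infima, which is what this direction of monotone convergence provides.) Combining the two displays with \eqref{eq:Rapprox} gives
\[
\inf_{\mu\in\C}\int{\left(\tfrac{1}{2}|w|^{2}+F\right)\,d\mu}\ \ge\ \limsup_R \inf_{\mu\in\C}\int{\left(\tfrac{1}{2}|w|^{2}+F_R\right)\,d\mu}\ \ge\ \liminf_R \inf_{(x,v)}\delta V_\delta^R(x,v)\ \ge\ \inf_{(x,v)}\delta V_\delta(x,v),
\]
which is \eqref{eq:Rapprox2}.

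The main obstacle is the interchange of limit and infimum on both sides — in particular making sure the right-hand side does not drop in the limit. This is why the uniform-in-$R$ coercivity estimate \eqref{eq:estest} is essential: without it the infimum of $\delta V_\delta^R$ could in principle escape to spatial infinity as $R$ grows, and local uniform convergence alone would not control it. Once one observes that all relevant infima are realized on a single compact set (independent of $R$ for $R$ large), the rest is a routine monotone/locally-uniform convergence argument.
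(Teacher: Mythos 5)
Your proposal is correct and follows essentially the same route as the paper: pass to the limit $R\to+\infty$ in \eqref{eq:Rapprox}, using $F_R\le F$ to bound the left-hand side and the uniform coercivity \eqref{eq:estest} together with the local uniform convergence $V_\delta^R\to V_\delta$ to show that the right-hand infimum does not drop in the limit (the paper phrases this by extracting converging near-minimizers $(x_R,v_R)$ rather than by your fixed-ball reduction, but it is the same argument). The one ingredient to state explicitly is the uniform-in-$R$ upper bound $V_\delta^R\le V_\delta$ (immediate from $F_R\le F$), which is what guarantees the infima of $\delta V_\delta^R$ are nearly attained on a fixed compact set.
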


\proof We first consider the left-hand side of \eqref{eq:Rapprox}, for which we obviously have, by the definition of $F_R$ in Lemma \ref{lem:approx1}, 
\begin{align}\label{zkebsdfc}
& \inf_{\mu \in \C} \int_{\T^{d} \times \R^{d} \times \R^{d}}{\left(\frac{1}{2}|w|^{2}+F_{R}(x,v) \right)\ \mu(dx,dv,dw)} \notag
\\
& \qquad \leq \inf_{\mu \in \C} \int_{\T^{d} \times \R^{d} \times \R^{d}}{\left(\frac{1}{2}|w|^{2}+F(x,v) \right)\ \mu(dx,dv,dw)}.	
\end{align}
As for the right hand side of \cref{eq:Rapprox}, we note that, if $(x_R, v_R)\in \T^d\times \R^d$ satisfies 
$$
V^R_\delta(x_R,v_R)\leq \inf_{(x,v)\in \T^d\times \R^d} V^R_\delta(x,v) +R^{-1},
$$
then, as $V^R_\delta\leq V_\delta$ and \eqref{eq:estest} holds, we have 
$$
 \tilde{c}_{1,\delta}\big(1+\min\{|v_R|^{\alpha}, R\} \big)	-\tilde{c}_{2, \delta} \leq  \inf_{(x,v)\in \T^d\times \R^d} V_\delta(x,v) +R^{-1}. 
$$
This proves that $v_R$ remains bounded in $R$ and we can find a subsequence of $(x_R,v_R)$,  denoted in the same way, which converges to some $(\bar x,\bar v)\in \T^d\times \R^d$ as $R\to +\infty$. Then by local uniform convergence of $V^R_\delta$ to $V_\delta$, we obtain that 
\begin{equation}\label{zkebsdfc2}
 \inf_{(x,v)\in \T^d\times \R^d} V_\delta(x,v) \leq  V_\delta(\bar x,\bar v) = \lim_{R\to+\infty}  V^R_\delta(x_R,v_R) = \lim_{R\to+\infty} \inf_{(x,v)\in \T^d\times \R^d} V^R_\delta(x,v) .
\end{equation}
Passing to the limit as $R\to+\infty$ in \eqref{eq:Rapprox} proves the Lemma thanks to \eqref{zkebsdfc} and \eqref{zkebsdfc2}. 
%
\qed

}

We are now ready to prove the main result of this section.

\begin{proposition}[{\bf Characterization with closed measures}]\label{prop:caract}
Assume that $F$ satisfies {\bf (F1)} --- {\bf (F3)}. For any $(x_{0},v_{0}) \in \T^{d}\times \R^{d}$ we have that
\begin{equation*}
\lim_{T \to \infty} \frac{1}{T}V^{T}(0,x_{0},v_{0}) = \inf_{\mu \in \mathcal{C}} \int_{\T^{d} \times \R^{d} \times \R^{d}}{\left(\frac{1}{2}|w|^{2}+F(x,v) \right)\ \mu(dx,dv,dw)}.
\end{equation*}
\end{proposition}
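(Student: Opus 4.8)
The plan is to prove the two inequalities separately. Write $\lambda:=\lim_{T\to\infty}\frac1T V^T(0,x_0,v_0)$, which exists and is independent of $(x_0,v_0)$ by \Cref{prop:existslimit}, and set
$$
\ell:=\inf_{\mu\in\C}\int_{\T^d\times\R^d\times\R^d}\Big(\tfrac12|w|^2+F(x,v)\Big)\,\mu(dx,dv,dw);
$$
the goal is $\lambda=\ell$.

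\textbf{Step 1 ($\lambda\geq\ell$).} First I would pick, for each $T$, an optimal trajectory $\gamma^T_{(x_0,v_0)}$ for $V^T(0,x_0,v_0)$ and form the occupation measure $\mu^T$ of \eqref{eq:uniform}. As in the opening lines of the proof of \Cref{lem:measureconv}, one has exactly
\begin{equation*}
\frac1T V^T(0,x_0,v_0)=\int_{\T^d\times\R^d\times\R^d}\Big(\tfrac12|w|^2+F(x,v)\Big)\,\mu^T(dx,dv,dw).
\end{equation*}
By \Cref{lem:measureconv} the family $\{\mu^T\}$ is tight and, along a subsequence $T_n\to+\infty$, converges weakly-$*$ to some $\mu^*\in\C$. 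Since $F\geq0$ by \textbf{(F2)}, the integrand $(x,v,w)\mapsto\frac12|w|^2+F(x,v)$ is nonnegative and continuous, hence lower semicontinuous, so $\nu\mapsto\int(\frac12|w|^2+F)\,d\nu$ is weakly-$*$ lower semicontinuous; passing to the limit along $T_n$ (and using that the full sequence $\frac1T V^T(0,x_0,v_0)$ converges to $\lambda$) gives
\begin{equation*}
\lambda=\lim_{n\to\infty}\int\Big(\tfrac12|w|^2+F\Big)\,\mu^{T_n}\ \geq\ \int\Big(\tfrac12|w|^2+F\Big)\,\mu^*\ \geq\ \ell,
\end{equation*}
the last inequality because $\mu^*\in\C$.

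\textbf{Step 2 ($\lambda\leq\ell$).} For each fixed $\delta>0$, \Cref{lem.Rapprox2} yields $\ell\geq\inf_{(x,v)\in\T^d\times\R^d}\delta V_\delta(x,v)$. Letting $\delta\to0^+$ and invoking \Cref{prop:Inflemma}, the right-hand side tends to $\lambda$, whence $\ell\geq\lambda$. Combining the two steps gives $\lambda=\ell$, which is the claim.

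\textbf{Where the difficulty lies.} The genuinely hard work of this chain is not in the present proposition but has already been carried out: the min-max duality of \Cref{thm:minmaxform}, the construction of the globally Lipschitz, smooth, compactly testable approximations $\varphi^{\eps,\delta}_R$ of the coercive approximate corrector $V_\delta$ (\Cref{lem:approx1} and \Cref{lem.Rapprox}), and the two limit passages $R\to+\infty$ (\Cref{lem.Rapprox2}) and $\delta\to0^+$ (\Cref{prop:Inflemma}). At the level of the assembly above the only points requiring a little care are the weak-$*$ lower semicontinuity of the integral against the \emph{unbounded} but nonnegative (thanks to the normalization in \textbf{(F2)}) continuous integrand $\frac12|w|^2+F$, and the identification of the limit of $\frac1{T_n}V^{T_n}(0,x_0,v_0)$ with $\lambda$, which is legitimate because the whole limit exists by \Cref{prop:existslimit}.
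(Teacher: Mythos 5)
Your proposal is correct and follows essentially the same route as the paper: the occupation measures $\mu^T$ of \Cref{lem:measureconv} together with weak-$*$ lower semicontinuity of the integral of the nonnegative integrand give $\lambda\geq\ell$, and the reverse inequality is obtained exactly as in the paper by combining \Cref{lem.Rapprox2} with \Cref{prop:Inflemma} and \cref{eq:tauberian}. No gaps.
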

\proof
Let $\gamma^T_{(x_{0},v_{0})}$ be a minimum for the problem
\begin{equation*}
\inf_{\gamma \in \Gamma_{0}(x_{0},v_{0}) }J^{T}(\gamma).	
\end{equation*}
Let us define the probability measures $\mu_{T}$ by
 \begin{equation*}
 	\int_{\T^{d} \times \R^{d} \times \R^{d}}{\varphi(x,v,w)\ d\mu^{T}(x,v,w)}=\frac{1}{T}\int_{0}^{T}{\varphi(\gamma^T_{(x_{0},v_{0})}(t), \dot\gamma^T_{(x_{0},v_{0})}(t), \ddot\gamma^T_{(x_{0},v_{0})}(t))\ dt}
 \end{equation*}
 for any $\varphi \in \C^{\infty}_{c}(\T^{d} \times \R^{d} \times \R^{d})$. By Lemma \ref{lem:measureconv}, the $(\mu^T)$ converge, up to a subsequence $(T_n)$, weak-$*$ to a closed  measure $\mu^*$. Therefore 
%
%
\begin{align*}
&\lim_{T \to \infty} \inf_{\gamma \in \Gamma_{0}(x_{0},v_{0})}\frac{1}{T}J^{T}(\gamma) =  \lim_{n \to \infty} \frac{1}{T_n}\int_{0}^{T_n}{\left(\frac{1}{2}|\ddot\gamma^{T_n}_{(x_{0},v_{0})}(t)|^{2}+F(\gamma^{T_n}_{(x_{0},v_{0})}(t), \dot\gamma^{T_n}_{(x_{0},v_{0})}(t)) \right)\ dt} 
\\
=\ & \lim_{n \to \infty}	\int_{\T^{d} \times \R^{d} \times \R^{d}}{\left(\frac{1}{2}|w|^{2}+F(x,v) \right)\ \mu^{T_n}(dx,dv,dw)}
\geq\  \int_{\T^{d} \times \R^{d} \times \R^{d}}{\left(\frac{1}{2}|w|^{2}+F(x,v) \right)\ \mu^{*}(dx,dv,dw)}. 
\end{align*}
Thus, taking the infimum over the set of closed measures $\C$ we obtain that
\begin{equation*}
	\lim_{T \to \infty} \inf_{\gamma \in \Gamma_{0}(x_{0},v_{0})}\frac{1}{T} J^{T}(\gamma) \geq\ \inf_{\mu \in \C} \int_{\T^{d} \times \R^{d} \times \R^{d}}{\left(\frac{1}{2}|w|^{2}+F(x,v) \right)\ d\mu(x,v,w)}. 
\end{equation*}

To obtain the opposite inequality, we note that, by \eqref{eq:Rapprox2} (which holds for any $\delta>0$) and Proposition \ref{prop:Inflemma}, we have 
$$
\inf_{\mu \in \C} \int_{\T^{d} \times \R^{d} \times \R^{d}}{\left(\frac{1}{2}|w|^{2}+F(x,v) \right)\ \mu(dx,dv,dw)} \geq \lim_{\delta\to 0^+} \inf_{(x,v) \in \T^{d} \times \R^{d}} \delta V_{\delta}(x,v) = \lambda,
$$
where $\lambda$ defined in \cref{eq:tauberian}. Then we can conclude thanks to \eqref{eq:tauberian}.
\qed

\proof[Proof of Theorem \ref{theo:main1}]  The existence of the limit and the fact that it does not depend on $(x,v)$ is the main statement of Proposition \ref{prop:existslimit} while the characterization of this limit is given by Proposition \ref{prop:caract}. 
\qed 

\section{Asymptotic behavior of MFG with acceleration}\label{sec:AsymptoticMFG}

 We now turn to MFG problems of acceleration. In order to study the asymptotic behavior of these problems, we first need to describe the expected limit: the ergodic MFG problems of acceleration. The difficulty here is that, as explained in the previous part, we do not expect the existence of a corrector and therefore the ergodic MFG problem cannot be phrased in these terms. We overcome this issue by using the characterization of the ergodic limit given by Theorem \ref{theo:main1} in terms of closed measures. This suggests the definition of equilibria for ergodic MFG  of acceleration (Definition \ref{def.ergoMFGeq}). We prove the existence and the uniqueness of a solution in Proposition \ref{prop.exuniErgoMFGeq}. In order to pass to the limit in the time-dependent MFG system of acceleration, we first need to rephrase the solution of this system in terms of closed measures (more precisely in terms of the so-called $T-$closed measures, see \Cref{def:TclosedDef}). This is the aim of the second part of the section (\Cref{thm:lintegral}). Thanks to this characterization, we are then able to conclude on the long time average and complete the proof of \Cref{def.ergoMFGeq}. 

\subsection{Ergodic MFG with acceleration}

Following Definition \ref{def:closedmeasure} we recall that $\C \subset  \PP_{\alpha,2}(\T^{d} \times \R^{d} \times \R^{d})$ denotes the set of closed measures, i.e. $\mu \in \C$ if it satisfies for any test function $\varphi \in \C^{\infty}_{c}(\T^{d} \times \R^{d})$ the following condition:
\begin{equation*}
\int_{\T^{d} \times \R^{d} \times \R^{d}}{\Big(\langle D_{x}\varphi(x,v), v \rangle + \langle D_{v}\varphi(x,v) , w \rangle  \Big)\ \mu(dx,dv,dw)}	=0.
\end{equation*}

The candidate limit problem that we are going to study is the following fixed point problem: we look for a measure $\mu \in \C$ such that
   \begin{equation}\label{eq:Ergodicproblem}
\mu \in \argmin_{\eta \in \C}\left\{\int_{\T^{d} \times \R^{d} \times \R^{d}}{\left(\frac{1}{2}|w|^{2} +F(x,v, \pi \sharp \mu)\right)\ \eta(dx,dv,dw)} \right\}
\end{equation}
where $\pi: \T^{d} \times \R^{d} \times \R^{d}$, defined as $\pi(x,v,w)=(x,v)$, is the projection function.

%
%
%
%
%

\begin{proposition}\label{prop.exuniErgoMFGeq}
Assume that $F$ satisfies {\bf (F1')} and {\bf (F2')}. Then, there exists at least one solution ${ (\bar\lambda, \bar\mu) \in \R \times \C}$ of the ergodic MFG problem. 

Moreover, if $F$ satisfies the  monotonicity assumption \eqref{monotonicity}
and if $(\bar\lambda_{1}, \bar\mu_{1})$ and $(\bar\lambda_{2}, \bar\mu_{2})$ are two solutions of the ergodic MFG problem, then $\bar\lambda_{1}=\bar\lambda_{2}$.
\end{proposition}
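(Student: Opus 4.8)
The existence part will follow from a Kakutani-type fixed-point argument on the set of closed measures. First I would show that the set of minimizers of the functional $\eta\mapsto \int (\tfrac12|w|^2+F(x,v,m))\,\eta(dx,dv,dw)$ is nonempty for each fixed $m\in\PP_1(\T^d\times\R^d)$: by {\bf (F2')} the functional is coercive in the $(|w|^2+|v|^\alpha)$-moments, so a minimizing sequence is tight in $\PP_{\alpha,2}$, and both the functional (by Fatou, since $F\ge 0$ and $F$ is continuous) and the closedness condition (which is linear in $\eta$ against fixed test functions) pass to the weak-$*$ limit. This also gives an a priori bound on the second moments of any minimizer $\eta$ in terms of $\sup_m$-bounds coming from {\bf (F2')}, so we may restrict attention to a fixed weak-$*$ compact convex subset $\mathcal K\subset\C$ that is stable under the minimization. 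I would then define the set-valued map $\Phi$ sending $\bar m=\pi\sharp\bar\mu$ (equivalently $\bar\mu$, projected) to the set of $\mu\in\mathcal K$ solving \eqref{eq:Ergodicproblem} with $m=\bar m$, check it has nonempty closed convex values (convexity because the functional is linear in $\eta$ and $\C$ is convex) and closed graph (using continuity of $F$ in all variables, {\bf (F1')}, together with the moment bounds to pass to the limit in both the objective value and the constraint), and conclude by the Kakutani--Fan--Glicksberg theorem that $\Phi$ has a fixed point $\bar\mu$. Setting $\bar\lambda$ equal to the common value in \eqref{eq:ergodic} finishes existence.

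For the uniqueness of $\bar\lambda$, suppose $(\bar\lambda_1,\bar\mu_1)$ and $(\bar\lambda_2,\bar\mu_2)$ are two solutions, and write $m_i=\pi\sharp\bar\mu_i$. The key observation is that $\bar\mu_i$ is a minimizer for the problem with cost $F(\cdot,\cdot,m_i)$, so using $\bar\mu_j$ ($j\ne i$) as a competitor,
\begin{align*}
\bar\lambda_1 &= \int\Big(\tfrac12|w|^2+F(x,v,m_1)\Big)\,\bar\mu_1(dx,dv,dw) \le \int\Big(\tfrac12|w|^2+F(x,v,m_1)\Big)\,\bar\mu_2(dx,dv,dw),
\end{align*}
and symmetrically with indices swapped. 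Subtracting the optimality identity for $\bar\lambda_2$ from this inequality gives
\[
\bar\lambda_1-\bar\lambda_2 \le \int \big(F(x,v,m_1)-F(x,v,m_2)\big)\,\bar\mu_2(dx,dv,dw) = \int \big(F(x,v,m_1)-F(x,v,m_2)\big)\,m_2(dx,dv),
\]
using that the integrand depends only on $(x,v)$ so the push-forward $m_2=\pi\sharp\bar\mu_2$ is all that matters; the symmetric computation yields
\[
\bar\lambda_2-\bar\lambda_1 \le \int \big(F(x,v,m_2)-F(x,v,m_1)\big)\,m_1(dx,dv).
\]
Adding these two inequalities produces
\[
0 \le -\int \big(F(x,v,m_1)-F(x,v,m_2)\big)\,\big(m_1(dx,dv)-m_2(dx,dv)\big),
\]
which by the monotonicity assumption \eqref{monotonicity} forces $\int (F(x,v,m_1)-F(x,v,m_2))^2\,dx\,dv=0$, hence $F(\cdot,\cdot,m_1)=F(\cdot,\cdot,m_2)$ a.e. Plugging this equality back into the two one-sided bounds on $\bar\lambda_1-\bar\lambda_2$ gives $\bar\lambda_1-\bar\lambda_2\le 0$ and $\bar\lambda_2-\bar\lambda_1\le 0$, so $\bar\lambda_1=\bar\lambda_2$.

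The main obstacle is the existence half, specifically verifying the closed-graph property of $\Phi$ in a setting where the measures live in the non-locally-compact space $\T^d\times\R^d\times\R^d$ and where the cost $F$ is only continuous with $\alpha$-growth: one must be careful that along a converging sequence $m_k\to m$, $\mu_k\rightharpoonup^*\mu$, both $\int(\tfrac12|w|^2+F(x,v,m_k))\,\mu_k$ and $\inf_{\eta\in\C}\int(\tfrac12|w|^2+F(x,v,m_k))\,\eta$ converge to the corresponding quantities at the limit. Lower semicontinuity of the former is immediate from Fatou, but the reverse inequality (needed to keep $\mu$ a minimizer) requires uniform integrability of the $|v|^\alpha$-term, which is exactly what the moment bounds from {\bf (F2')} on the restricted set $\mathcal K$ provide; and continuity of the value $m\mapsto\inf_{\eta\in\C}\int(\cdots)\,\eta$ follows by comparing competitors and using that $|F(x,v,m_k)-F(x,v,m)|\to 0$ uniformly on the (compact in $v$, after the moment truncation) relevant sets together with dominated convergence against the uniformly moment-bounded measures. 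Once these continuity facts are in hand, Kakutani applies routinely.
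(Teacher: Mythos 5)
Your proposal follows essentially the same route as the paper: existence via a Kakutani fixed-point argument for the set-valued map $\mu\mapsto\{\text{minimizers of }\eta\mapsto\int(\tfrac12|w|^2+F(x,v,\pi\sharp\mu))\,\eta\}$ on a $d_1$-compact convex subset of $\C$ stabilized by the moment bounds from {\bf (F2')}, with the closed-graph step resting exactly on the uniform-integrability/truncation argument you sketch; and uniqueness of $\bar\lambda$ by the same Lasry--Lions crossed-competitor computation, monotonicity forcing $F(\cdot,\cdot,\pi\sharp\bar\mu_1)=F(\cdot,\cdot,\pi\sharp\bar\mu_2)$ (everywhere, by continuity of $F$) and hence $\bar\lambda_1=\bar\lambda_2$. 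No substantive differences from the paper's proof.
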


\proof  {
Let $\mathcal K$ be the set of  probability measures $\mu\in \C$ such that
\begin{equation*}
\int_{\T^{d} \times \R^{d}}(\frac12 |w|^2+c_F^{-1}|v|^\alpha)\ \mu(dx,dv,dw) \leq 2c_F,	
\end{equation*}
where $\alpha$ and $c_F$ are given by assumption {\bf (F2')}. We endow $\mathcal K$ with the $d_1$ distance and define, for any $\mu\in \mathcal K$, the set $\Psi(\mu)$ as the set of minimizers $\bar \eta\in \C$ of the map defined on $\C$
\begin{equation}\label{eoizgrdmngc}
\eta \to \int_{\T^{d} \times \R^{d} \times \R^{d}}{\left(\frac{1}{2}|w|^{2} +F(x,v, \pi\sharp \mu)\right)\ \eta(dx,dv,dw)} 
\end{equation}
We also denote by $\Lambda(\mu)$ the value of this minimum. 
First, we show that  the set-valued map $\Psi$ is well-defined from $\mathcal K$ into $\mathcal K$. Indeed, if $\mu \in \mathcal K$ and $\bar \eta\in \C$ is any minimum of \eqref{eoizgrdmngc}, we have by assumption {\bf (F2')}  (setting $\tilde \eta = \delta_{(x_0, 0,0)} \in \C$ for an arbitrary point $x_0\in \T^d$):
\begin{align*}
& \int_{\T^d\times \R^d\times \R^d} (\frac12|w|^2+c_F^{-1}|v|^\alpha-c_F)\ \bar \eta(dx,dv,dw) \leq
 \int_{\T^d\times \R^d\times \R^d} (\frac12|w|^2 +F(x,v, \pi\sharp \mu)) \  \bar \eta(dx,dv,dw)\\
& \qquad \leq  \int_{\T^d\times \R^d\times \R^d} (\frac12|w|^2 +F(x,v, \pi\sharp \mu)) \  \tilde \eta(dx,dv,dw) \ \leq \ c_F.
\end{align*}
So $\bar \eta$ belongs to $\mathcal K$. Moreover, we observe that a solution of the ergodic MFG problem exists if the set-valued map $\Psi$ has a fixed-point and we prove that this is the case using the Kakutani fixed-point theorem. 
Since $\alpha > 1 $, by the above considerations, we know that the space $\mathcal K$ is compact with respect to the $d_{1}$ distance. Thus,  for any $\mu \in \mathcal K$, the set $\Psi(\mu)$ is convex and compact. It remains to check that $\Psi$ has closed graph. Fix a sequence $\{ \mu_{j}\}_{j \in \N} \subset  \mathcal K$ and a sequence $\{\eta_{j}\}_{j \in \N} \subset  \mathcal K$ such that 
\begin{equation*}
\mu_{j} \rightharpoonup^{d_1} \mu, \quad \eta_{j} \rightharpoonup^{d_1} \bar \eta, \quad \text{and}\quad \eta_{j} \in \Psi(\mu_{j})\ \forall j \in \N.
\end{equation*}
Let us show that $\bar \eta \in \Psi(\mu)$. Note that $\bar \eta\in \C$. 
It remains to check that $\bar \eta$ minimizes \eqref{eoizgrdmngc}. By standard lower-semi continuity arguments and continuity of $F$, we have: 
\begin{align}\label{kjzhebsrdfc}
\int_{\T^d\times \R^d\times \R^d}( \frac12|w|^2 +F(x,v, \pi\sharp \mu)) \  \bar \eta(dx,dv,dw) \leq \liminf_j \int_{\T^d\times \R^d\times \R^d} (\frac12|w|^2 +F(x,v, \pi\sharp \mu_j)) \  \eta_j(dx,dv,dw). 
 \end{align}
 We now check that the right-hand side is not larger that $\Lambda(\mu)$. Indeed, let $\tilde \eta$ belong to $\Psi(\mu)$ and fix $\eta>0$. As $\tilde \eta$ belongs to $\mathcal K$ we can find $R>0$ such that 
 $$
 \int_{(\T^d\times \R^d\times \R^d)\backslash B_R}( \frac12|w|^2 + c_F|v|^\alpha+c_F)\ \tilde \eta(dx,dv,dw) \leq \eps. 
 $$
 As $\pi\sharp \mu_j$ converges to $\pi\sharp \mu$ for the $d_1$ distance, we have by assumption  {\bf (F1')} that, for $j$ large enough,  
 $$
 \lim_{j \to +\infty} \sup_{(x,v)\in  B_R} |F(x,v,\pi\sharp \mu_j)- F(x,v,\pi\sharp \mu)|\leq \eps .
 $$
 So, by optimality of $\eta_j$ and the estimates above,  
 \begin{align*}
&  \int_{\T^d\times \R^d\times \R^d} (\frac12|w|^2 +F(x,v, \pi\sharp \mu_j)) \  \eta_j(dx,dv,dw)=\Lambda(\mu_j) \\
& \leq \int_{\T^d\times \R^d\times \R^d} (\frac12|w|^2 +F(x,v, \pi\sharp \mu_j)) \  \tilde \eta(dx,dv,dw)\\
& \leq \int_{B_R} (\frac12|w|^2 +F(x,v, \pi\sharp \mu_j)) \ \tilde  \eta(dx,dv,dw)
+ \int_{B_R^c} (\frac12|w|^2 +c_F|v|^\alpha+c_F) \ \tilde \eta(dx,dv,dw)\\
&\leq \int_{B_R} (\frac12|w|^2 +F(x,v, \pi\sharp \mu)) \ \tilde  \eta(dx,dv,dw)+2\eps \leq \Lambda(\mu)+2\eps.
 \end{align*}
Coming back to \eqref{kjzhebsrdfc}, this shows that  
$$
\int_{\T^d\times \R^d\times \R^d}( \frac12|w|^2 +F(x,v, \pi\sharp \mu)) \ \bar \eta(dx,dv,dw) \leq \Lambda(\mu),
$$
and therefore that $\bar \eta$ belongs to $\Psi(\mu)$. Therefore, applying Kakutani fixed-point theorem we have that there exists a fixed point $\bar{\eta}$ of $\Psi$ and this is a solution of the ergodic MFG problem. 

Now, we prove that under the monotonicity assumption \cref{monotonicity} the critical value is unique. Let $(\blambda_{1}, \bar\mu_{1})$ and $(\blambda_{2}, \bar\mu_{2})$ be two solutions of the ergodic MFG problem. Then, by definition we have that, for $i=1$ or $i=2$, 
\begin{align}\label{eq:def1}
\begin{split}
\bar\lambda_{i} =\ & \inf_{\mu \in \C} \int_{\T^{d} \times \R^{d} \times \R^{d}}{\left(\frac{1}{2}|w|^{2} +F(x,v, \pi \sharp \bar{\mu}_{i})\right)\ \mu(dx,dv,dw)}	
\\
=\ & \int_{\T^{d} \times \R^{d} \times \R^{d}}{\left(\frac{1}{2}|w|^{2} +F(x,v, \pi \sharp \bar{\mu}_{i})\right)\ \bar{\mu}_{i}(dx,dv,dw)}.
\end{split}
\end{align}
Thus, exchanging the role of $\bar{\mu}_{1}$ and $\bar{\mu}_{2}$ as competitor for $\blambda_{1}$ and $\blambda_{2}$, respectively, we get
\begin{equation}\label{eq:lambda1}
\bar\lambda_{1} \leq \int_{\T^{d} \times \R^{d} \times \R^{d}}{\left(\frac{1}{2}|w|^{2}+F(x,v,\pi \sharp \bar\mu_{1}) \right)\ \bar{\mu}_{2}}(dx,dv,dw)	
\end{equation}
and 
\begin{equation}\label{eq:lambda2}
\bar\lambda_{2} \leq \int_{\T^{d} \times \R^{d} \times \R^{d}}{\left(\frac{1}{2}|w|^{2}+F(x,v,\pi \sharp \bar\mu_{2}) \right)\ \bar{\mu}_{1}}(dx,dv,dw).
\end{equation}
We first take the difference between \cref{eq:lambda1} and \cref{eq:def1} for $i=2$ and we get
\begin{align*}
\bar\lambda_{1}-\bar\lambda_{2} \leq & 	
 \int_{\T^{d} \times \R^{d} \times \R^{d}}{\big(F(x,v,\pi \sharp \bar{\mu}_{1})-  F(x,v,\pi \sharp \bar{\mu}_{2})\big)\ d\bar{\mu}_{2}}(dx,dv,dw).
\end{align*}
Taking the difference between \cref{eq:lambda1} for $i=1$  and  \cref{eq:lambda2} we get
\begin{align*}
	\bar\lambda_{1}-\bar\lambda_{2} \geq & 	
 \int_{\T^{d} \times \R^{d} \times \R^{d}}{\big(F(x,v,\pi \sharp \bar{\mu}_{1})-  F(x,v,\pi \sharp \bar{\mu}_{2})\big)\ d\bar{\mu}_{1}}(dx,dv,dw).
\end{align*}
Thus, taking the difference of the above expressions we deduce that
\begin{equation*}
0 \geq 	 \int_{\T^{d} \times \R^{d} \times \R^{d}}{\big(F(x,v,\pi \sharp \bar{\mu}_{1})-  F(x,v,\pi \sharp \bar{\mu}_{2})\big)\ (\bar{\mu}_{1}}(dx,dv,dw)-\bar{\mu}_{2}(dx,dv,dw))
\end{equation*}
which implies by monotonicity assumption \cref{monotonicity} that $F(x,v,\pi \sharp \bar{\mu}_{1}) = F(x,v,\pi \sharp \bar{\mu}_{2})$. Coming back to \eqref{eq:lambda1}, it follows that $\bar\lambda_{1}=\bar\lambda_{2}$.
\qed

\subsection{Representation of the solution of the time-dependent MFG system}

We now consider the  time-dependent MFG system \eqref{eq:accMFG}. In \cite{bib:CM, bib:YA}, it has been proved that such system has a solution $(u^{T}, m^{T})$ and that the function $u^{T}$ can be represented as
\begin{equation}\label{eq:MFGvaluefunction}
u^{T}(t,x,v)=\inf_{\gamma \in \Gamma_{t}(x,v)}\left\{\int_{t}^{T}{\Big(\frac{1}{2}|\ddot\gamma(s)|^{2}+F(\gamma(s), \dot\gamma(s), m^{T}_{s}) \Big)\ ds} + g(\gamma(T), \dot\gamma(T), m^{T}_{T}) \right\}.	
\end{equation}
In order to compare the solution of this time-dependent problem with the solution of the ergodic MFG problem, which is written in terms of closed measures, we need to rewrite the time-dependent problem  in term of flows of Borel probability measures on $\T^d\times \R^d\times \R^d$. The following definition mirrors the definition of closed measure in the ergodic setting: 

\begin{definition}[{\bf T-Closed measures}]\label{def:TclosedDef}
		Let $T$ be a finite time horizon and let $m_{0}\in \PP_1(\T^d\times \R^d)$. If $\eta \in C([0,T]; \PP_{1}(\T^{d} \times \R^{d} \times \R^{d}))$, we say that $\eta$ is a $T$-closed measure associated with $m_{0}$ if for any test function $\varphi \in C^{\infty}_{c}([0,T] \times \T^{d} \times \R^{d})$ the following holds
		\begin{align}\label{eq:Tclosedcondition}
		\begin{split}
			& \int_{0}^{T}\int_{\T^{d} \times \R^{d} \times \R^{d}}{\Big(\partial_{t} \varphi(t,x,v) + \langle D_{x}\varphi(t,x,v), v \rangle + \langle D_{v} \varphi(t,x,v), w \rangle  \Big)\ \eta_{t}(dx,dv,dw)dt}
			\\
			=\ & \int_{\T^{d} \times \R^{d} \times \R^{d}}{\varphi(T,x,v)\ \eta_{T}(dx,dv,dw)}- \int_{\T^{d} \times \R^{d} \times \R^{d}}{\varphi(0,x,v)\ m_{0}(dx,dv)}.
			\end{split}
		\end{align}
	\end{definition}
\noindent We denote by $\C^{T}(m_{0})$ the set of $T$-closed measures associated with $m_{0} \in \PP_1(\T^{d} \times \R^{d})$. 

The goal of the subsection is to prove the following equality: 
\begin{theorem}\label{thm:lintegral}
Assume that $F$ satisfies {\bf (F1')}, {\bf (F2')} and $g$ satisfies {\bf (G1)}. Let $M \geq 0$ and assume that
\begin{equation}\label{eq:Mbound}
	\int_{\T^{d} \times \R^{d}}{|v|^{\alpha}\ m_{0}(dx,dv)} \leq M. 
\end{equation}
{ Let $(u^T, m^T)$ be a solution to \eqref{eq:accMFG}. } Then
\begin{align}\label{eq:lintegral}
	\begin{split}
& \inf_{\mu \in \C^{T}(m_{0})} \Bigl\{  \int_{0}^{T}\int_{\T^{d} \times \R^{d} \times \R^{d}}{\left(\frac{1}{2}|w|^{2}+F(x,v,m^{T}_{t}) \right)\ \mu_{t}(dx,dv,dw)dt}	
\\
&\qquad\qquad\qquad + \int_{\T^{d} \times \R^{d} \times \R^{d}}{g(x,v,m^{T}_{T})\ \mu_{T}(dx,dv,dw)} \Bigr\}
\\
 &\qquad =  \int_{\T^{d} \times \R^{d}}{u^{T}(0,x,v)\ m_{0}(dx,dv)}. 
\end{split} 
\end{align}

In addition, there exists a minimizer $\bar \mu^T\in \C^T(m_0)$ of the problem in the left-hand side of \eqref{eq:lintegral} such that $m^T_{t} = \pi \sharp \bar\mu^T_{t}$, where $\pi:  \T^{d} \times \R^{d} \times \R^{d} \to \T^{d} \times \R^{d}$ is the canonical projection on the two first coordinates, i.e. such that $\pi(x,v,w)=(x,v)$.
\end{theorem}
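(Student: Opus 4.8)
The plan is to prove the two inequalities in \eqref{eq:lintegral} separately and then identify the minimizer.

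\textbf{Upper bound.} First I would bound the left-hand side of \eqref{eq:lintegral} from above by $\int_{\T^d\times\R^d}u^T(0,x,v)\,m_0(dx,dv)$ by exhibiting a competitor. By the representation formula \eqref{eq:MFGvaluefunction} and the coercivity of $L$ in $w$ coming from {\bf (F2')}, for each $(x,v)\in\T^d\times\R^d$ the infimum defining $u^T(0,x,v)$ is attained, and a measurable selection argument produces a Borel map $(x,v)\mapsto\gamma_{x,v}\in H^2(0,T;\T^d)$ with $\gamma_{x,v}(0)=x$, $\dot\gamma_{x,v}(0)=v$, optimal for $u^T(0,x,v)$. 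Let $\bar\mu^T_t$ be the push-forward of $m_0$ under $(x,v)\mapsto(\gamma_{x,v}(t),\dot\gamma_{x,v}(t),\ddot\gamma_{x,v}(t))$. The a priori bounds on optimal trajectories (analogous to those of \Cref{sec:ErgodicBehavior}, adapted to the $m$-dependent Lagrangian) provide the moment control ensuring $t\mapsto\bar\mu^T_t$ lies in $C([0,T];\PP_1(\T^d\times\R^d\times\R^d))$, while the identity $\frac{d}{dt}\varphi(t,\gamma_{x,v}(t),\dot\gamma_{x,v}(t))=\partial_t\varphi+\langle D_x\varphi,\dot\gamma_{x,v}\rangle+\langle D_v\varphi,\ddot\gamma_{x,v}\rangle$, integrated in $t$ and against $m_0$, shows $\bar\mu^T\in\C^T(m_0)$. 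Evaluating the functional at $\bar\mu^T$ and using the optimality of each $\gamma_{x,v}$ yields exactly $\int_{\T^d\times\R^d}u^T(0,x,v)\,m_0(dx,dv)$, so the left-hand side of \eqref{eq:lintegral} is $\le$ the right-hand side.

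\textbf{Lower bound.} For the reverse inequality I would show that every $\mu\in\C^T(m_0)$ with finite cost satisfies $(\text{functional})\ge\int u^T(0,\cdot,\cdot)\,m_0$. Disintegrate $\mu_t(dx,dv,dw)=\bar\mu_t(dx,dv)\,\mu_t^{x,v}(dw)$ and set $b(t,x,v)=\int w\,\mu_t^{x,v}(dw)$; then \eqref{eq:Tclosedcondition} states precisely that $t\mapsto\bar\mu_t$ is a weak solution on $(0,T)$ of the continuity equation with drift $(v,b(t,x,v))$ and initial datum $m_0$. The finiteness of the cost together with {\bf (F2')} yields $\int_0^T\!\int(|v|+|b|)\,\bar\mu_t\,dt<\infty$, so the superposition principle (\cite{bib:AGS}) furnishes a probability measure $Q$ on $AC([0,T];\T^d\times\R^d)$, concentrated on curves $t\mapsto(\gamma(t),\dot\gamma(t))$ with $\ddot\gamma(t)=b(t,\gamma(t),\dot\gamma(t))$ a.e., such that $\bar\mu_t=(e_t)\sharp Q$ for all $t\in[0,T]$, where $e_t$ is the evaluation map at time $t$. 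By Jensen's inequality $\frac12|b(t,x,v)|^2\le\int\frac12|w|^2\,\mu_t^{x,v}(dw)$, so $Q$-a.e.\ $\gamma$ lies in $H^2(0,T;\T^d)$ and is an admissible competitor in \eqref{eq:MFGvaluefunction} for $u^T(0,\gamma(0),\dot\gamma(0))$. Combining these facts,
\begin{align*}
&\int_0^T\!\!\int\Bigl(\tfrac{1}{2}|w|^2+F(x,v,m^T_t)\Bigr)\,\mu_t(dx,dv,dw)\,dt+\int g(x,v,m^T_T)\,\mu_T(dx,dv,dw)\\
&\quad\ge\int\Bigl[\int_0^T\!\Bigl(\tfrac{1}{2}|\ddot\gamma(t)|^2+F(\gamma(t),\dot\gamma(t),m^T_t)\Bigr)dt+g(\gamma(T),\dot\gamma(T),m^T_T)\Bigr]dQ(\gamma)\\
&\quad\ge\int u^T(0,\gamma(0),\dot\gamma(0))\,dQ(\gamma)=\int_{\T^d\times\R^d}u^T(0,x,v)\,m_0(dx,dv),
\end{align*}
where the last equality uses $(e_0)\sharp Q=m_0$. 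Taking the infimum over $\mu\in\C^T(m_0)$ completes the proof of \eqref{eq:lintegral}, and shows that the $\bar\mu^T$ constructed above is a minimizer.

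\textbf{Identification $m^T_t=\pi\sharp\bar\mu^T_t$.} For $t\in(0,T)$ and $m_0$-a.e.\ $(x,v)$, the optimal curve $\gamma_{x,v}$ is, by the link between $u^T$ and its optimal trajectories (established in \cite{bib:CM,bib:YA}), a characteristic of the Hamilton--Jacobi equation: $u^T$ is differentiable at $(t,\gamma_{x,v}(t),\dot\gamma_{x,v}(t))$ and $\ddot\gamma_{x,v}(t)$ is the corresponding optimal feedback, expressed through $D_vu^T(t,\gamma_{x,v}(t),\dot\gamma_{x,v}(t))$. Hence $t\mapsto\pi\sharp\bar\mu^T_t$ is a weak solution of the continuity equation appearing in \eqref{eq:accMFG} with initial datum $m_0$. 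Since the drift $D_vu^T$ is locally Lipschitz in $(x,v)$ (equivalently, $u^T$ is locally semiconcave), by the regularity theory for \eqref{eq:accMFG} recalled from \cite{bib:CM,bib:YA}, that continuity equation has a unique solution; as $m^T$ is one such solution, $\pi\sharp\bar\mu^T_t=m^T_t$ for every $t\in[0,T]$. The main obstacle is the lower bound: representing an arbitrary $T$-closed measure by a measure on $H^2$ curves requires the superposition principle in a degenerate, second-order setting with an unbounded velocity variable, so one must first secure the moment and energy estimates forced by {\bf (F2')} and verify that the curves it produces have the $H^2$ regularity needed to be competitors in \eqref{eq:MFGvaluefunction}; the remaining subtleties — the measurable selection of optimal trajectories and the uniqueness for the continuity equation used in the identification step — both reduce to the well-posedness and regularity results quoted from \cite{bib:CM,bib:YA}.
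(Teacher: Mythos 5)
Your proof of the equality \eqref{eq:lintegral} is, in outline, correct, but it follows a genuinely different route from the paper for the difficult (lower bound) direction. The paper never invokes the superposition principle: it first truncates $F$ and $g$ (Lemma \ref{lem:approx2}, Proposition \ref{thm:initialintegral}), uses the Lipschitz, compactly supported solution $u^T_R$ of the truncated Hamilton--Jacobi equation, mollifies it in $x$, $v$ and $t$ to obtain a smooth approximate subsolution, plugs it as a test function in an inf--sup/min--max inequality over $T$-closed measures, and finally passes to the limit $R\to+\infty$ via monotonicity of $F_R,g_R$ and a tightness argument on minimizing sequences. You replace all of this by: disintegration of $\mu_t$, Jensen's inequality for the quadratic cost, and the superposition principle of \cite{bib:AGS} applied to the phase-space continuity equation $\partial_t\bar\mu_t+\ddiv_{(x,v)}\big((v,b_t)\bar\mu_t\big)=0$ encoded in \eqref{eq:Tclosedcondition}, so that $Q$-a.e.\ curve is an $H^2$ competitor in \eqref{eq:MFGvaluefunction}. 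This works: the finiteness of the cost and {\bf (F2')} give $\int_0^T\int(|v|^\alpha+|b_t|^2)\,d\bar\mu_t\,dt<\infty$ with $\alpha>1$, which is the $L^p$, $p>1$, integrability required by the superposition theorem (your statement only records an $L^1$ bound; you should quote the stronger bound, since the basic form of the theorem needs it), and $\pi\sharp\mu_0=m_0$ follows from \eqref{eq:Tclosedcondition} and the continuity of $t\mapsto\mu_t$ in $\PP_1$. Your route is shorter and avoids the delicate regularization of $u^T$ and the $R\to\infty$ limit; its price is the reliance on the measure-theoretic superposition machinery in an unbounded velocity setting, whereas the paper's duality argument stays at the PDE/viscosity level. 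The upper bound (measurable selection of optimal curves and the push-forward measure $\bar\mu^T$) is the same construction as in the paper, and your moment/continuity claims for $\bar\mu^T$ are at the same level of detail as the paper's.

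The genuine weak point is the identification $m^T_t=\pi\sharp\bar\mu^T_t$. Your argument rests on the assertion that $D_vu^T$ is locally Lipschitz ``equivalently, $u^T$ is locally semiconcave'', and on uniqueness for the continuity equation in \eqref{eq:accMFG} with this drift. Local semiconcavity does \emph{not} imply that $D_vu^T$ is locally Lipschitz (it only gives one-sided second-order bounds, and $D_vu^T$ need not even be everywhere defined), and uniqueness of distributional solutions of the continuity equation driven by $D_vu^T$ is precisely the delicate issue one wants to avoid in deterministic MFG; neither \cite{bib:CM} nor \cite{bib:YA} hands you such a uniqueness statement off the shelf. The gap is easily repaired, and the paper shows how: by \cite[Proposition 4.2]{bib:YA}, the equilibrium measure itself satisfies $m^T_t=\big((x,v)\mapsto(\gamma_{(x,v)}(t),\dot\gamma_{(x,v)}(t))\big)\sharp m_0$ for the optimal flow, and by \cite[Lemma 3.5]{bib:YA} one has $\ddot\gamma_{(x,v)}(t)=D_vu^T(t,\gamma_{(x,v)}(t),\dot\gamma_{(x,v)}(t))$ along optimal trajectories; defining $\bar\mu^T_t$ as the push-forward of $m_0$ under $(x,v)\mapsto(\gamma_{(x,v)}(t),\dot\gamma_{(x,v)}(t),\ddot\gamma_{(x,v)}(t))$ then gives $\pi\sharp\bar\mu^T_t=m^T_t$ directly, with no uniqueness argument for the Fokker--Planck/continuity equation needed. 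With that substitution your proof is complete.
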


The proof of Theorem \ref{thm:lintegral}  follows standard arguments but is slightly technical because the problem is stated in the whole space in velocity. The main problem is to regularize the map $u^T$ in order to have a smooth function with a compact support which satisfies a suitable (approximate) Hamilton-Jacobi inequality. The first step towards this aim is the following Lemma: 

\begin{lemma}[{\bf Approximate problem 2}]\label{lem:approx2}
Let $f: \T^{d} \times \R^{d} \times [0,T] \to \R$ be a continuous map with at most a polynomial growth and which is locally Lipschitz continuous in space locally uniformly in time and  $g: \T^{d} \times \R^{d} \to \R$ be a locally Lipschitz continuous map with at most a polynomial growth.
Let $R > 0$ and let $\xi^{R}$ be a smooth cut--off function such that $\xi^{R} \geq 0$, $\xi^{R}(x,v)=1$ if $(x,v) \in \T^{d} \times  \overline{B}_{R}$, $ 0 \leq \xi^{R}(x,v) \leq 1$ if $(x,v) \in \T^{d} \times \overline{B}_{2R} \backslash \overline{B}_{R}$ and $\xi^{R}(x,v)=0$ if $(x,v) \in \T^{d} \times \overline{B}_{R}^{c}$. 
Define $f_{R}: \T^{d} \times \R^{d} \times [0,T] \to \R$ and $g_{R}: \T^{d} \times \R^{d} \to \R$ as $f_{R}=\xi^{R}  f$ and $g_{R}=\xi^{R} g$.
Let $u^{T}_{R}$ be the viscosity solution of the following problem
	\begin{align}\label{eq:problem1}
	\begin{cases}
	-\partial_{t}u^{T}_{R}(t,x,v)+\frac{1}{2}|D_{v} u_{R}^{T}(t,x,v)|^{2}-\langle D_{x}u^{T}_{R}(t,x,v),v \rangle &\\
	\hspace{6cm}   =f_{R}(t,x,v), &\;  {\rm in }\; [0,T]\times  \T^{d} \times \R^{d} 
	\\
	u^{T}(T,x,v)=g_{R}(x,v), & \; {\rm in}\;  \T^{d} \times \R^{d}.
	\end{cases}
	\end{align}
Then, the following hold:
\begin{enumerate}
\item $u^{T}_{R}$ has compact support;
\item $u^{T}_{R}$ is Lipschitz continuous in space and velocity variable;
\item $u^{T}_{R}$ converge, as $R \to +\infty$, locally uniformly to the solution $u^{T}$ of the following problem
\begin{align*}
\begin{cases}
	-\partial_{t}u^{T}(t,x,v)+\frac{1}{2}|D_{v} u^{T}(t,x,v)|^{2}-\langle D_{x}u^{T}(t,x,v),v \rangle =f(t,x,v), & \quad {\rm in}\;  [0,T]\times \T^{d} \times \R^{d}
	\\
	u^{T}(T,x,v)=g(x,v), & \quad  {\rm in}\;  \T^{d} \times \R^{d}.
	\end{cases}
	\end{align*}
\end{enumerate}	
\end{lemma}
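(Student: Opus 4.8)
The plan is to treat each of the three assertions via standard tools from viscosity solution theory, exploiting the structure of the Hamiltonian $H(x,v,p_x,p_v)=\frac12|p_v|^2-\langle p_x,v\rangle$ and the fact that $f_R$ and $g_R$ are compactly supported in the velocity variable. First I would record the representation formula: by the dynamic programming principle, the (unique, continuous) viscosity solution of \eqref{eq:problem1} is
\begin{equation*}
u^T_R(t,x,v)=\inf_{\gamma\in\Gamma_t(x,v)}\left\{\int_t^T\Bigl(\tfrac12|\ddot\gamma(s)|^2+f_R(s,\gamma(s),\dot\gamma(s))\Bigr)ds+g_R(\gamma(T),\dot\gamma(T))\right\},
\end{equation*}
and similarly for $u^T$ with $f,g$ in place of $f_R,g_R$. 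For \textbf{(1)}, since $f_R\equiv 0$ and $g_R\equiv 0$ outside $\T^d\times\overline B_{2R}$, for $(x,v)$ with $|v|$ large the trajectory $\gamma(s)=x+(s-t)v$ is admissible and costs exactly $\int_t^T\tfrac12|0|^2\,ds=0$ once it never re-enters $\T^d\times\overline B_{2R}$; hence $u^T_R(t,x,v)\le 0$ there, and since $f_R,g_R\ge$ their own lower bounds one gets $u^T_R\equiv 0$ for $|v|$ large (after possibly enlarging the threshold to account for the time horizon $T$ and the radius $2R$, using that a straight free-flight trajectory with $|v|$ large leaves $\T^d\times\overline B_{2R}$ and never returns). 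So $u^T_R$ has compact support; the precise support radius depends on $R$ and $T$ but not on finer data.

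For \textbf{(2)}, Lipschitz continuity in $(x,v)$, I would argue by the same comparison-of-trajectories technique used in the proof of Proposition \ref{prop.reguVdelta}(iv): given $(x,v)$ and a perturbation, take an optimal $\gamma^*$ for $u^T_R(t,x,v)$, translate it in $x$ (respectively perturb the initial velocity and correct over a short initial interval using the curve of Lemma \ref{lem:reachablecurve}), and estimate the cost difference using that $f_R,g_R$ are globally Lipschitz (they are products of the Lipschitz, compactly supported cut-off $\xi^R$ with the locally Lipschitz $f,g$, hence globally Lipschitz with a constant depending on $R$) and that all competing trajectories relevant to the infimum stay in a bounded region because $u^T_R$ is already known to be bounded and $f_R$ is coercive-free but bounded below. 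The $x$-Lipschitz bound is immediate from $f_R,g_R$ being Lipschitz in $x$; the $v$-Lipschitz bound additionally uses the short-time reachability estimate \eqref{eq:boundsigma} to join the perturbed initial datum, whose extra cost is $O(R^2)\cdot|$velocity perturbation$|$ after optimizing the time of the correction, yielding a (possibly $R$-dependent) Lipschitz constant.

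For \textbf{(3)}, local uniform convergence as $R\to\infty$, the key point is that the relevant trajectories are confined to a fixed compact set: one first shows an a priori bound $\|u^T_R\|_{L^\infty(K)}\le C(K)$ uniform in $R$ on compact sets $K$ (straight free-flight gives an upper bound independent of $R$ once $R$ exceeds the velocities appearing in $K$ up to the travel correction; the lower bound comes from $f_R\ge$ a fixed constant on $K$ and $g_R\ge$ a fixed constant). Then, for a minimizing sequence $\gamma_n$ of $u^T_R(t,x,v)$ with $(t,x,v)\in K$, the energy bound $\int_t^T\tfrac12|\ddot\gamma_n|^2\le C(K)$ plus the bound on $|\dot\gamma_n(t)|$ forces $\dot\gamma_n$ to stay in a bounded set for all $s$ only on average; to get a genuine confinement one invokes exactly the truncation argument of Lemma \ref{lem.jkhzrnedg} (its time-dependent version, Remark \ref{rem.jkhzrnedg}) to replace $\gamma_n$ by a near-optimal competitor whose velocity is bounded by a constant depending only on $K$ and $T$, at the cost of an error that is negligible as the truncation parameter grows. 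Once all competitors are confined to $\T^d\times\overline B_{R_0}$ with $R_0=R_0(K,T)$ fixed, for $R\ge 2R_0$ we have $f_R=f$ and $g_R=g$ on that region, so $u^T_R=u^T$ there, giving not merely convergence but eventual equality on compacts — which is stronger than claimed. The \emph{main obstacle} is this last confinement step: a priori an optimal trajectory could spend time at large velocity, and ruling that out requires the quantitative truncation lemma rather than a soft compactness argument; once that is in hand the rest is routine.
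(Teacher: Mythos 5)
The paper gives no proof of this lemma to compare against: it only says the result ``follows standard arguments in optimal control'' and omits it. Your sketch is precisely the kind of argument intended, and parts (1) and (2) are sound in outline. Two refinements for (1)--(2): in (1), the half you leave implicit is the quantitative point that any curve starting with $|\dot\gamma(t)|=|v|$ large which enters $\supp\xi^R$ must pay an acceleration cost at least $(|v|-2R)^2/(2T)$ (Cauchy--Schwarz), which beats $T\|f_R\|_\infty+\|g_R\|_\infty$; make that explicit, since ``$f_R,g_R\ge$ their own lower bounds'' is not by itself an argument. In (2), the estimate \eqref{eq:boundsigma} of \Cref{lem:reachablecurve} only bounds the oscillation, not the modulus: to get Lipschitz you must either rerun the cubic-interpolation computation so that the connection cost is proportional to the perturbation (connection time $\theta\sim|v-v'|$, with a separate treatment when $T-t<|v-v'|$ using the Lipschitz terminal datum), or, more simply, translate the optimal curve keeping the same acceleration, $\tilde\gamma(s)=\gamma(s)+(x'-x)+(s-t)(v'-v)$, and use that $f_R,g_R$ are globally Lipschitz; the latter is the cleaner standard route and avoids the endpoint issue near $t=T$.

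The one genuine soft spot is in (3). Your confinement and uniform-in-$R$ lower bound both require a \emph{global} lower bound (indeed the coercive two-sided bound of {\bf (F2')}, uniformly in $t$, together with bounded $g$): ``$f_R\ge$ a fixed constant on $K$'' is not enough, because competitors leave $K$, and \Cref{lem.jkhzrnedg} itself is proved under {\bf (F2)}. The literal hypotheses of the lemma (mere polynomial growth) do not give this, and without it the claim is actually false -- for $f=-|v|^4$, $g=0$ the limit value function is identically $-\infty$ -- so you should state the structure conditions of the application as standing assumptions; this is a defect of the statement rather than of your plan, but as written your proof of (3) does not follow from the stated hypotheses alone. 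On the other hand, the ``main obstacle'' you identify is milder than you fear: since $T$ is fixed here, once near-minimizers satisfy $\int_t^T|\ddot\gamma|^2\le C(K)$ uniformly in $R$, Cauchy--Schwarz gives the pointwise bound $\sup_s|\dot\gamma(s)|\le |v|+(2TC(K))^{1/2}$, i.e.\ genuine confinement by a soft estimate; the truncation construction of \Cref{lem.jkhzrnedg}/\Cref{rem.jkhzrnedg} is only needed in the paper where bounds uniform in $T$ are required. With that, your conclusion of eventual equality $u^T_R=u^T$ on compact sets for $R$ large (using confinement for both the truncated and the untruncated problem, since the data coincide on the confinement region) is correct and indeed stronger than the stated local uniform convergence.
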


The proof of the Lemma follows standard argument in  optimal control and we omit it. Next we prove Theorem \ref{thm:lintegral} in the simpler case where $F$ and $g$ are replaced by $F_R$ and $g_R$: 

\begin{proposition}\label{thm:initialintegral}
	Assume that $F$ satisfies {\bf (F1')} and {\bf (F2')} and $g$ satisfies {\bf (G1)}. Let $(u^{T}, m^{T})$ be a solution of system \cref{eq:accMFG}. 
	For $R > 0$, let $\xi^{R}$ be a smooth cut--off function  as in Lemma \ref{lem:approx2} and let us set $F_{R}=\xi^{R} F$ and $g_{R}=\xi^{R} g$. Let $u^{T}_{R}$ be the continuous viscosity solution of the following problem
	\begin{align}\label{eq:MFG-HJ}
	\begin{split}
		\begin{cases}
			-\partial_{t}u^{T}_{R}(t,x,v)+\frac{1}{2}|D_{v}u^{T}_{R}(t,x,v)|^{2}-\langle D_{x}u^{T}_{R}(t,x,v),v \rangle & \\
			\hspace{6cm}  =F_{R}(x,v,m^{T}_{t}), & \quad {\rm in}\; [0,T] \times \T^{d} \times \R^{d}
			\\
			u^{R}(T,x,v)=g_{R}(x,v, m^{T}_{T}), & \quad {\rm in}\;  \T^{d} \times \R^{d}. 
		\end{cases}
	\end{split}
	\end{align}
 Then
	\begin{align*}
		\inf_{\mu \in \C^{T}(m_{0})} & \Bigl\{  \int_{0}^{T}\int_{\T^{d} \times \R^{d} \times \R^{d}}{\Big(\frac{1}{2}|w|^{2}+F_{R}(x,v,m^{T}_{t}) \Big)\ \mu_{t}(dx,dv,dw)dt} 
	\\
	 & \qquad +\int_{\T^{d} \times \R^{d} \times \R^{d}}{g_{R}(x,v,m^{T}_{T})\ \mu_{T}(dx,dv,dw)} \Bigr\}
	\\
	= & \int_{\T^{d} \times \R^{d}}{u^{T}_{R}(0,x,v)\ m_{0}(dx,dv)}. 	
	\end{align*}
\end{proposition}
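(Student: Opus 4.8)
\emph{Proof sketch.} The equality is proved as two inequalities. Throughout, write $f_R(t,x,v):=\xi^R(x,v)F(x,v,m^T_t)$ and let $g_R$ denote the terminal datum $(x,v)\mapsto\xi^R(x,v)g(x,v,m^T_T)$; since $F$ and $g$ are continuous, $\xi^R$ is smooth with compact support and $t\mapsto m^T_t$ is continuous, the maps $f_R$ and $g_R$ are continuous, bounded, supported in $[0,T]\times\T^d\times\overline B_{2R}$ and uniformly continuous. By Lemma~\ref{lem:approx2}, $u^T_R$ has compact support, is Lipschitz continuous in $(x,v)$, and is the value function of the control problem with running cost $f_R$ and terminal cost $g_R$, i.e. the analogue of \eqref{eq:MFGvaluefunction} holds.

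\emph{The inequality ``$\ge$''.} Fix $\mu\in\C^T(m_0)$ with $\int_0^T\!\!\int\frac12|w|^2\,\mu_t(dx,dv,dw)\,dt<+\infty$ (otherwise there is nothing to prove). Mollify $u^T_R$ in the $(x,v)$ variables only, $\varphi^\varepsilon(t,\cdot):=u^T_R(t,\cdot)\star\rho_\varepsilon$ with $\rho_\varepsilon$ a standard mollifier; then $\varphi^\varepsilon$ is smooth and compactly supported in $(x,v)$, Lipschitz in $t$, and $\varphi^\varepsilon(0,\cdot)\to u^T_R(0,\cdot)$, $\varphi^\varepsilon(T,\cdot)\to g_R$ uniformly. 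Since a Lipschitz viscosity solution is an a.e.\ solution, the convexity of $p\mapsto\frac12|p|^2$ (Jensen's inequality), the bound $\|D_xu^T_R\|_\infty<+\infty$ and the uniform continuity of $f_R$ give, for a.e.\ $(t,x,v)$,
\[
-\partial_t\varphi^\varepsilon+\tfrac12|D_v\varphi^\varepsilon|^2-\langle D_x\varphi^\varepsilon,v\rangle\ \le\ f_R(t,x,v)+\omega(\varepsilon),\qquad \omega(\varepsilon)\xrightarrow[\varepsilon\to0]{}0 .
\]
Adding $\langle D_v\varphi^\varepsilon,w\rangle$ and using $\frac12|D_v\varphi^\varepsilon|^2+\langle D_v\varphi^\varepsilon,w\rangle\ge-\frac12|w|^2$ yields
\[
\partial_t\varphi^\varepsilon+\langle D_x\varphi^\varepsilon,v\rangle+\langle D_v\varphi^\varepsilon,w\rangle\ \ge\ -\tfrac12|w|^2-f_R(t,x,v)-\omega(\varepsilon).
\]
Using $\varphi^\varepsilon$ as test function in \eqref{eq:Tclosedcondition} — legitimate after a further mollification in $t$ and a passage to the limit, because $\mu\in C([0,T];\PP_1)$ and $\varphi^\varepsilon$, $D_{x}\varphi^\varepsilon$, $D_v\varphi^\varepsilon$ are bounded with compact support in $(x,v)$ — and integrating the previous inequality against $\mu_t(dx,dv,dw)\,dt$ and rearranging gives
\[
\int_0^T\!\!\int\Big(\tfrac12|w|^2+f_R\Big)\mu_t\,dt+\int\varphi^\varepsilon(T,x,v)\,\mu_T\ \ge\ \int\varphi^\varepsilon(0,x,v)\,m_0-\omega(\varepsilon)\,T .
\]
Letting $\varepsilon\to0$ and then taking the infimum over $\mu\in\C^T(m_0)$ gives ``$\ge$''.

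\emph{The inequality ``$\le$''.} For each $(x,v)$ let $\gamma_{x,v}\in H^2(0,T;\T^d)$ be a minimizer in $\Gamma_0(x,v)$ of $\gamma\mapsto\int_0^T(\frac12|\ddot\gamma|^2+f_R(t,\gamma,\dot\gamma))\,dt+g_R(\gamma(T),\dot\gamma(T))$, chosen measurably in $(x,v)$ (direct method plus the Kuratowski--Ryll-Nardzewski selection theorem). Comparing with the affine curve $t\mapsto x+tv$ and using $0\le f_R\le\|f_R\|_\infty$, $|g_R|\le\|g_R\|_\infty$ gives $\int_0^T|\ddot\gamma_{x,v}|^2\,dt\le C$ with $C$ independent of $(x,v)$, so $t\mapsto\dot\gamma_{x,v}(t)$ is bounded by $|v|+\sqrt{CT}$ and $\tfrac12$-Hölder with a uniform constant. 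Define $\bar\mu=\{\bar\mu_t\}$ by
\[
\int_{\T^d\times\R^d\times\R^d}\!\psi\,\bar\mu_t(dx,dv,dw)=\int_{\T^d\times\R^d}\!\psi\big(\gamma_{x,v}(t),\dot\gamma_{x,v}(t),\ddot\gamma_{x,v}(t)\big)\,m_0(dx,dv).
\]
Then $\bar\mu\in\C^T(m_0)$: this follows from $\frac{d}{dt}\varphi(t,\gamma_{x,v}(t),\dot\gamma_{x,v}(t))=\partial_t\varphi+\langle D_x\varphi,\dot\gamma_{x,v}\rangle+\langle D_v\varphi,\ddot\gamma_{x,v}\rangle$, integrated over $[0,T]$ and against $m_0$, together with $\gamma_{x,v}(0)=x$, $\dot\gamma_{x,v}(0)=v$. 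The one delicate point is that $t\mapsto\bar\mu_t$ must be continuous with values in $\PP_1(\T^d\times\R^d\times\R^d)$, which need not follow for minimizers of a problem with merely continuous data; this is handled by first replacing $f_R,g_R$ by smooth approximations converging uniformly on the common compact support, whose minimizers solve the Euler--Lagrange equation and are $C^2$ with locally uniform bounds (so the associated occupation measures are continuous in time), and then passing to the limit using the stability of $u^T_R$ under these perturbations. Finally, by optimality of $\gamma_{x,v}$ and Fubini,
\[
\int_0^T\!\!\int\Big(\tfrac12|w|^2+f_R\Big)\bar\mu_t\,dt+\int g_R\,\bar\mu_T=\int_{\T^d\times\R^d}u^T_R(0,x,v)\,m_0(dx,dv),
\]
which gives ``$\le$'' and completes the proof.

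\emph{Main obstacle.} The crux of each inequality is its first step: producing from $u^T_R$ a genuinely admissible object — a smooth, compactly supported subsolution up to a vanishing error that can be paired with $\mu\in\C^T(m_0)$, respectively an occupation measure that actually lies in $C([0,T];\PP_1)$. Both are possible only because Lemma~\ref{lem:approx2} provides the compact support and Lipschitz regularity of $u^T_R$, and because an auxiliary smoothing of the data compensates for the fact that $F$ is assumed merely continuous; the rest is a routine duality/occupation-measure computation.
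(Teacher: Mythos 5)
Your proposal is correct and follows essentially the same route as the paper: the ``$\geq$'' inequality by mollifying $u^T_R$ (using its compact support and Lipschitz bounds from Lemma \ref{lem:approx2}) into a smooth approximate subsolution, testing it against a $T$-closed measure and completing the square in $w$, and the ``$\leq$'' inequality via a measurable selection of optimal trajectories and the associated occupation measure, exactly as in the paper's use of Lemma \ref{lem:miniselection}. The only notable difference is that you make explicit (via smoothing of the data) the verification that the occupation measure lies in $C([0,T];\PP_1)$, a point the paper dismisses as routine, and you handle the time regularization by uniform continuity of $f_R$ where the paper uses a time-shift together with the $d_1$-Lipschitz continuity of $t\mapsto m^T_t$; both are minor technical variants of the same argument.
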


\begin{proof}
	We first prove that
	\begin{align}\label{eq:hand1}
	\begin{split}
	& \inf_{\mu \in \C^{T}(m_{0})} \Bigl\{  \int_{0}^{T}\int_{\T^{d} \times \R^{d} \times \R^{d}}{\Big(\frac{1}{2}|w|^{2}+F_{R}(x,v,m^{T}_{t}) \Big)\ \mu_{t}(dx,dv,dw)dt} 
	\\
	& \qquad\qquad +\  \int_{\T^{d} \times \R^{d} \times \R^{d}}{g_{R}(x,v,m^{T}_{T})\ \mu_{T}(dx,dv,dw)} \Bigr\}
	\\
	 &\qquad  \geq \int_{\T^{d} \times \R^{d}}{u^{T}_{R}(0,x,v)\ m_{0}(dx,dv)}.
	\end{split} 	
	\end{align}
	 
	 We have that
	 \begin{align*}
	&  \inf_{\mu \in \C^{T}(m_{0})} \Bigl\{\int_{0}^{T}\int_{\T^{d} \times \R^{d} \times \R^{d}}{\Big(\frac{1}{2}|w|^{2}+F_{R}(x,v,m^{T}_{t}) \Big)\ \mu_{t}(dx,dv,dw)dt} 
	\\
	& \qquad \qquad +\ \int_{\T^{d} \times \R^{d} \times \R^{d}}{g_{R}(x,v,m^{T}_{T})\ \mu_{T}(dx,dv,dw)}\Bigr\}
	\\
	= & \inf_{\mu \in C([0,T]; \PP_1(\T^{d} \times \R^{d} \times \R^{d}))}\sup_{\varphi \in C^{\infty}_{c}([0,T] \times \T^{d} \times \R^{d})}\int_{0}^{T}\int_{\T^{d} \times \R^{d} \times \R^{d}}{\Big(\frac{1}{2}|w|^{2} + F_{R}(x,v, m^{T}_{t}) + \partial_{t} \varphi(t,x,v)}
	\\
	 & \qquad +\ \langle D_{x}\varphi(t,x,v), v \rangle + \langle D_{v}\varphi(t,x,v), w \rangle \Big)\ \mu_{t}(dx,dv,dw)dt 
	\\
	+\ & \int_{\T^{d} \times \R^{d} \times \R^{d}}{\Big(g_{R}(x,v,m^{T}_{T})-\varphi(T,x,v) \Big)\ \mu_{T}(dx,dv,dw)} + \int_{\T^{d} \times \R^{d}}{\varphi(0,x,v)\ m_{0}(dx,dv)}
	\\
	\geq & \sup_{\varphi \in C^{\infty}_{c}([0,T] \times \T^{d} \times \R^{d})} \inf_{\mu \in C([0,T]; \PP_1(\T^{d} \times \R^{d} \times \R^{d}))}\int_{0}^{T}\int_{\T^{d} \times \R^{d} \times \R^{d}}{\Big(\frac{1}{2}|w|^{2} + F_{R}(x,v, m^{T}_{t}) + \partial_{t} \varphi(t,x,v)}
	\\
	& \qquad +\  \langle D_{x}\varphi(t,x,v), v \rangle + \langle D_{v}\varphi(t,x,v), w \rangle \Big)\ \mu_{t}(dx,dv,dw)dt 
	\\
	+\ & \int_{\T^{d} \times \R^{d} \times \R^{d}}{\Big(g_{R}(x,v,m^{T}_{T})-\varphi(T,x,v) \Big)\ \mu_{T}(dx,dv,dw)} + \int_{\T^{d} \times \R^{d}}{\varphi(0,x,v)\ m_{0}(dx,dv)}. 
	 \end{align*}

	In the argument below, the constant $c_R$ depends on $R$ and on the data and may change from line to line.  Let $\xi^{1, \eps}= \xi^{1,\eps}(x)$ be a smooth mollifier  such that $\supp(\xi^{1,\eps}) \subset B_{\eps}$, $\xi^{1,\eps}(x) \geq 0$ and $\int_{B_{\eps}}{\xi^{1,\eps}(x)\ dx}=1$, and define $u^{\eps, R}_{1} =  u^{T}_{R} \star_x \xi^{1, \eps}(t,x,v)$ (the convolution being in the $x$ variable only). Let $R^{'} \geq R$ be such that $\supp(u^{T}_{R})$, $\supp(F_{R})$ and $\supp(g_{R})$ are contained in $B_{R^{'}}$.  Then, we have that $u^{\eps, R}_{1}$ satisfies the following inequality in the viscosity sense
	 \begin{align*}
	 & -\partial_{t} u^{\eps, R}_{1}(t,x,v) + \frac{1}{2}|D_{v} u^{\eps, R}_{1}(t,x,v)|^{2} - \langle D_{x} u^{\eps, R}_{1}(t,x,v), v \rangle  \leq F_{R} \star \xi^{1, \eps}(t,x,v) 
	 \\
	  & \qquad \leq\ F_{R}(x,v, m^{T}_{t}) + C_{F}\eps(1+|v|^{\alpha}){\bf 1}_{(x,v) \in \T^{d} \times B_{R^{'}}}. 	
	 \end{align*}
	 Now, let $\xi^{2, \eps}=\xi^{2, \eps}(v)$ be a smooth mollifier such that $\supp(\xi^{2,\eps}) \subset B_{\eps}$, $\xi^{2,\eps}(v) \geq 0$ and $\int_{B_{\eps}}{\xi^{2,\eps}(v)\ dv}=1$ and define $u^{R, \eps}_{2}=\xi^{2,\eps} \star_v u^{R, \eps}_{1}(t,x,v)$ (the convolution being now in the $v$ variable only). Then, by the Lipschitz regularity of $u^T_R$ stated in \Cref{lem:approx2} we have that
	 \begin{equation*}
	 |\xi^{2, \eps} \star_v \langle D_{x} u^{R, \eps}_{1}(t,x,\cdot), \cdot \rangle(v) - \langle D_{x} u^{R, \eps}_{2}(t,x,v), v \rangle | \leq  \eps \| D_{x} u^{R, \eps}_{1} \|_{\infty} \leq c_R \eps  {\bf 1}_{(x,v) \in \T^{d} \times B_{R^{'}}}. 
	 \end{equation*}
Hence $u^{\eps, R}_{2}$ satisfies in the viscosity sense: 
	 \begin{align*}
	 & -\partial_{t} u^{\eps, R}_{2}(t,x,v) + \frac{1}{2}|D_{v} u^{\eps, R}_{2}(t,x,v)|^{2} - \langle D_{x} u^{\eps, R}_{2}(t,x,v), v \rangle 
	 \\
	  & \qquad \leq\ F_{R}(x,v, m^{T}_{t}) + c_R\eps{\bf 1}_{(x,v) \in \T^{d} \times B_{R^{'}}}. 	
	 \end{align*}
We finally regularize $u^{\eps, R}_{2}$ in time. 
Let $\xi^{3, \eps}=\xi^{3,\eps}(t)$ be a smooth mollifier  such that $\supp(\xi^{2,\eps}) \subset B_{\eps}$, $\xi^{2,\eps}(t) \geq 0$ and $\int_{B_{\eps}}{\xi^{2,\eps}(t)\ dt}=1$ and define $u^{R, \eps}_{3}=\xi^{3, \eps} \star_t u^{R, \eps}_{2}(t,x,v)$ (convolution in time). Thus, $u^{R, \eps}_{3}$, for any $(t,x,v) \in (-\infty, T-\eps] \times \T^{d} \times \R^{d}$,  satisfies (in the classical sense)
\begin{align*}
	& -\partial_{t} u^{R, \eps}_{3}(t,x,v) + \frac{1}{2}|D_{v} u^{R, \eps}_{3}(t,x,v)|^{2} - \langle D_{x}u^{R, \eps}_{3}(t,x,v), v \rangle 
	\\
	& \qquad \leq  \xi^{3,\eps}\star_t F_{R}(x,v, m^{T}_{\cdot})(t) + c_R\eps {\bf 1}_{(x,v) \in \T^{d} \times B_{R^{'}}}.
\end{align*}
By \cite[Theorem 5.9]{bib:CM} we know that $m^{T}$ is Lipschitz continuous in time with respect to the $d_{1}$ distance. Setting  $\hat{u}^{R}_{\eps}(t,x,v) = u^{R, \eps}_{3}(t-\eps, x,v)$,  $\hat{u}^{R}_{\eps}$ satisfies therefore 
%
\begin{align}\label{eq:appHJ}
\begin{split}
	-\ & \partial_{t} \hat{u}^{R}_{\eps}(t,x,v) + \frac{1}{2}|D_{v} \hat{u}^{R}_{\eps}(t,x,v)|^{2} - \langle D_{x}\hat{u}^{R}_{\eps}(t,x,v), v \rangle 
	\\
	 &\qquad \leq\  F_{R}(x,v, m^{T}_{t}) + c_R \eps {\bf 1}_{(x,v) \in \T^{d} \times B_{R^{'}}} .
	\end{split} 
\end{align}
We note that $\hat{u}^{R}_{\eps}$ is smooth and has a compact support and converges uniformly to $u^R$ as $\eps\to 0$. 
Using $\hat{u}^{R}_{\eps}$ as test function we get
\begin{align*}
	& \sup_{\varphi \in C^{\infty}_{c}([0,T] \times \T^{d} \times \R^{d})} \inf_{\mu \in C([0,T]; \PP_1(\T^{d} \times \R^{d} \times \R^{d}))}\int_{0}^{T}\int_{\T^{d} \times \R^{d} \times \R^{d}}{\Big(\frac{1}{2}|w|^{2} + F_{R}(x,v, m^{T}_{t}) + \partial_{t} \varphi(t,x,v)}
	\\
	&\qquad +  \langle D_{x}\varphi(t,x,v), v \rangle + \langle D_{v}\varphi(t,x,v), w \rangle \Big)\ \mu_{t}(dx,dv,dw)dt 
	\\
	+ & \int_{\T^{d} \times \R^{d} \times \R^{d}}{\Big(g_{R}(x,v,m^{T}_{T})-\varphi(T,x,v) \Big)\ \mu_{T}(dx,dv,dw)} + \int_{\T^{d} \times \R^{d}}{\varphi(0,x,v)\ m_{0}(dx,dv)}
	\\
	\geq &   \inf_{\mu \in C([0,T]; \PP_1(\T^{d} \times \R^{d} \times \R^{d}))}\int_{0}^{T}\int_{\T^{d} \times \R^{d} \times \R^{d}}{\Big(\frac{1}{2}|w|^{2} + F_{R}(x,v, m^{T}_{t}) + \partial_{t} \hat{u}^{R}_{\eps}(t,x,v)}
	\\
	& \qquad +  \langle D_{x}\hat{u}^{R}_{\eps}(t,x,v), v \rangle + \langle D_{v}\hat{u}^{R}_{\eps}(t,x,v), w \rangle \Big)\ \mu_{t}(dx,dv,dw)dt 
	\\
	+ & \int_{\T^{d} \times \R^{d} \times \R^{d}}{\Big(g_{R}(x,v,m^{T}_{T})-\hat{u}^{R}_{\eps}(T, x,v) \Big)\ \mu_{T}(dx,dv,dw)} + \int_{\T^{d} \times \R^{d}}{\hat{u}^{R}_{\eps}(0,x,v)\ m_{0}(dx,dv)}
	\\
	=\ &   \inf_{(t,x,v) \in [0,T] \times \T^{d} \times \R^{d}} \Bigl\{  \frac{1}{2}|D_{v} \hat{u}^{R}_{\eps}(t,x,v)|^{2} + F_{R}(x,v, m^{T}_{t}) - \partial_{t} \hat{u}^{R}_{\eps}(t,x,v) + \langle D_{x}\hat{u}^{R}_{\eps}(t,x,v), v \rangle
	\\
	& \qquad +\  g_{R}(x,v,m^{T}_{T})-\hat{u}^{R}_{\eps}(T,x,v) \Bigr\} + \int_{\T^{d} \times \R^{d}}{\hat{u}^{R}_{\eps}(0,x,v)\ m_{0}(dx,dv)}. 
\end{align*}
	 By \cref{eq:appHJ} we obtain that
	 \begin{align*}
	 	&   \inf_{(t,x,v) \in [0,T] \times \T^{d} \times \R^{d}}\Bigl\{ \Big(\frac{1}{2}|D_{v} \hat{u}^{R}_{\eps}(t,x,v)|^{2} + F_{R}(x,v, m^{T}_{t}) + \partial_{t} \hat{u}^{R}_{\eps}(t,x,v) + \langle D_{x}\hat{u}^{R}_{\eps}(t,x,v), v \rangle \Big)
	\\
	 & \qquad +\ g_{R}(x,v,m^{T}_{T})- \hat{u}^{R}_{\eps}(T,x,v)\Bigr\}  + \int_{\T^{d} \times \R^{d}}{\hat{u}^{R}_{\eps}(0,x,v)\ m_{0}(dx,dv)}
	\\
	 \geq & -c_R \eps +  \inf_{(x,v) \in  \T^{d} \times \R^{d}} \Bigl\{ \ g_{R}(x,v,m^{T}_{T})- \hat{u}^{R}_{\eps}(T,x,v)\Bigr\}	 + \int_{\T^{d} \times \R^{d}}{\hat{u}^{R}_{\eps}(0,x,v)\ m_{0}(dx,dv)}.
	 \end{align*}
As $\eps \to 0^{+}$ we obtain \cref{eq:hand1}.

	On the other hand, since $u^{T}_{R}$ is a continuous viscosity solution we know that it can be represented as follows:
	\begin{equation}\label{eq:appValuefunction}
		u^{T}_{R}(0,x,v)=\inf_{\gamma \in \Gamma_{0}(x,v)}\left\{\int_{0}^{T}{\left(\frac{1}{2}|\ddot\gamma(t)|^{2}+F_{R}(\gamma(t), \dot\gamma(t), m^{T}_{t}) \right)\ dt} + g_{R}(\gamma(T), \dot\gamma(T), m^{T}_{T}) \right\}.
	\end{equation}
We define the measure $\nu \in C([0,T]; \PP_{1}( \T^{d} \times \R^{d} \times \R^{d}))$ as 
	\begin{equation*}
		\int_{\T^{d} \times \R^{d} \times \R^{d}}{\varphi(x,v,w)\ \nu_{t}(dx,dv,dw)}=\int_{\T^{d}\times \R^{d} \times \R^{d}}{\varphi(\gamma_{(x,v)}(t), \dot\gamma_{(x,v)}(t), \ddot\gamma_{(x,v)}(t))\ m_{0}(dx,dv)},
	\end{equation*}
	for any $\varphi \in C^{\infty}_{c}(\T^{d} \times \R^{d} \times \R^{d})$ and any $t \in [0,T]$, where $\gamma_{(x,v)}$ is a measurable selection of minimizers of problem \cref{eq:appValuefunction}, see \Cref{lem:miniselection}.
By the regularity of the minimizers it is not difficult to prove that $\nu \in \C^{T}(m_{0})$. Moreover, integrating the equality
\begin{equation*}
u^{T}_{R}(0,x,v)= \int_{0}^{T}{\Big(\frac{1}{2}|\ddot\gamma_{(x,v)}(t)|^{2}+F_{R}(\gamma_{(x,v)}(t), \dot\gamma_{(x,v)}(t), m^{T}_{t}) \Big)\ dt} + g_{R}(\gamma_{(x,v)}(T),\dot\gamma_{(x,v)}(T),m^{T}_{T})	
\end{equation*}
against the measure $m_{0}$ we deduce that
\begin{align*}
& \int_{\T^{d} \times \R^{d}}{u^{T}_{R}(0,x,v)\ m_{0}(dx,dv)}  
\\
= & \int_{\T^{d} \times \R^{d}}\int_{0}^{T}{\Big(\frac{1}{2}|\ddot\gamma_{(x,v)}(t)|^{2}+F_{R}(\gamma_{(x,v)}(t), \dot\gamma_{(x,v)}(t), m^{T}_{t}) \Big)\ dt\ m_{0}(dx,dv)} 
\\
& \qquad + \int_{\T^{d} \times \R^{d}}{g_{R}(\gamma_{(x,v)}(T),\dot\gamma_{(x,v)}(T),m^{T}_{T})\ m_{0}(dx,dv)}
\\
=\ & 	\int_{0}^{T}\int_{\T^{d} \times \R^{d} \times \R^{d}}{\Big(\frac{1}{2}|w|^{2}+F_{R}(x, v, m^{T}_{t}) \Big)\ \nu_{t}(dx,dv,dw)dt} + \int_{\T^{d} \times \R^{d} \times \R^{d}}{g_{R}(x,v,m^{T}_{T})\ \nu_{T}(dx,dv,dw)}
\\
\geq & \inf_{\mu \in \C^{T}(m_{0})} \int_{0}^{T}\int_{\T^{d} \times \R^{d} \times \R^{d}}{\Big(\frac{1}{2}|w|^{2}+F_{R}(x,v,m^{T}_{t}) \Big)\ \mu_{t}(dx,dv,dw)dt} 
\\
 &\qquad + \int_{\T^{d} \times \R^{d} \times \R^{d}}{g_{R}(x,v,m^{T}_{T})\ \mu_{T}(dx,dv,dw)}.
\end{align*}
This completes the proof. 
\end{proof}

\begin{proof}[Proof of Theorem \ref{thm:lintegral}]
 Using the notation of \Cref{thm:initialintegral} we know that for any $R \geq 0$
   \begin{align*}
\inf_{\mu \in \C^{T}(m_{0})} & \int_{0}^{T}\int_{\T^{d} \times \R^{d} \times \R^{d}}{\left(\frac{1}{2}|w|^{2}+F_{R}(x,v,m^{T}_{t}) \right)\ \mu_{t}(dx,dv,dw)dt}	
\\
+& \int_{\T^{d} \times \R^{d} \times \R^{d}}{g_{R}(x,v,m^{T}_{T})\ \mu_{T}(dx,dv,dw)}
\\
= & \int_{\T^{d} \times \R^{d}}{u^{T}_{R}(0,x,v)\ m_{0}(dx,dv)}. 
\end{align*} 
Then, on the one hand it is easy to see, by standard optimal control arguments, that for any $(x,v) \in \T^{d} \times \R^{d}$ we have that $|u^{T}_{R}(0,x,v)| \leq \tilde{C}_{1}(1+|v|^{\alpha})$. 
By Dominated Convergence Theorem we get
\begin{equation*}
\lim_{R \to + \infty} \int_{\T^{d} \times \R^{d}}{u^{T}_{R}(0,x,v)\ m_{0}(dx,dv)}=\int_{\T^{d} \times \R^{d}}{u^{T}(0,x,v)\ m_{0}(dx,dv)}. 	
\end{equation*}
On the other hand, without loss of generality we can define a cut-off function $\xi_{R}$ as in \Cref{thm:initialintegral} such that $F_{R}$ and $g_{R}$ are non-decreasing in $R$. Thus
\begin{align*}
	\limsup_{R \to +\infty} \inf_{\mu \in \C^{T}(m_{0})}\ & \int_{0}^{T}\int_{\T^{d} \times \R^{d} \times \R^{d}}{\left(\frac{1}{2}|w|^{2}+F_{R}(x,v,m^{T}_{t}) \right)\ \mu_{t}(dx,dv,dw)dt}	
\\
+\ & \int_{\T^{d} \times \R^{d} \times \R^{d}}{g_{R}(x,v,m^{T}_{T})\ \mu_{T}(dx,dv,dw)}
\\
\leq\ & \inf_{\mu \in \C^{T}(m_{0})} \int_{0}^{T}\int_{\T^{d} \times \R^{d} \times \R^{d}}{\left(\frac{1}{2}|w|^{2}+F(x,v,m^{T}_{t}) \right)\ \mu_{t}(dx,dv,dw)dt}	
\\
+\ & \int_{\T^{d} \times \R^{d} \times \R^{d}}{g(x,v,m^{T}_{T})\ \mu_{T}(dx,dv,dw)}.
\end{align*}
To prove the reverse inequality, let $\{R_{j}\}_{j \in \N}$ and $\{\mu^{j}_{t}\}_{j \in \N} \subset \C^{T}(m_{0})$ be such that
\begin{align*}
		\liminf_{R \to +\infty} \inf_{\mu \in \C^{T}(m_{0})}\ & \int_{0}^{T}\int_{\T^{d} \times \R^{d} \times \R^{d}}{\left(\frac{1}{2}|w|^{2}+F_{R}(x,v,m^{T}_{t}) \right)\ \mu_{t}(dx,dv,dw)dt}	
\\
+\ & \int_{\T^{d} \times \R^{d} \times \R^{d}}{g_{R}(x,v,m^{T}_{T})\ \mu_{T}(dx,dv,dw)}
\\
= & 	\lim_{j \to +\infty} \inf_{\mu \in \C^{T}(m_{0})}\ \int_{0}^{T}\int_{\T^{d} \times \R^{d} \times \R^{d}}{\left(\frac{1}{2}|w|^{2}+F_{R_{j}}(x,v,m^{T}_{t}) \right)\ \mu_{t}(dx,dv,dw)dt}	
\\
+\ & \int_{\T^{d} \times \R^{d} \times \R^{d}}{g_{R_{j}}(x,v,m^{T}_{T})\ \mu_{T}(dx,dv,dw)}
\\
= & 	\lim_{j \to +\infty} \ \int_{0}^{T}\int_{\T^{d} \times \R^{d} \times \R^{d}}{\left(\frac{1}{2}|w|^{2}+F_{R_{j}}(x,v,m^{T}_{t}) \right)\ \mu^{j}_{t}(dx,dv,dw)dt}	
\\
+\ & \int_{\T^{d} \times \R^{d} \times \R^{d}}{g_{R_{j}}(x,v,m^{T}_{T})\ \mu^{j}_{T}(dx,dv,dw)}.
\end{align*}
We claim that $\{ \mu_{t}^{j}\}_{j \in \N}$ is tight. Indeed,  the lower bound on $F$ and $g$, there exists a constant $C \geq 0$ such that
\begin{equation}\label{eq:wbound}
\sup_j \int_{0}^{T}\int_{\T^{d} \times \R^{d} \times \R^{d}}{|w|^{2}\ \mu_{t}^{j}(dx,dv,dw)dt} \leq C
\end{equation}
and thus it is enough to prove that the moment with respect to $v$ is also bounded. In order to prove this bound, let $\psi \in C^{\infty}_{c}(\R^{d})$ with $\psi(0)=0 $ and such that $|D\psi(p)| \leq 1$. For $\varphi(t,x,v)=(T-t)\psi(v)$, we have, by the definition of a $T-$closed measure in \cref{eq:Tclosedcondition},
\begin{align}\label{eq:attempt1}
\int_{0}^{T}\int_{\T^{d} \times \R^{d} \times \R^{d}}{\big(-\psi(v) + (T-t)\langle D\psi(v), w \rangle \big)\ \mu_{t}^{j}(dx,dv,dw)dt}= -T\int_{\T^{d} \times \R^{d}}{\psi(v)\ m_{0}(dx,dv)}	
\end{align}
and by \cref{eq:wbound} and Cauchy-Schwarz inequality we get
\begin{align*}
\Bigl|\int_{0}^{T}\int_{\T^{d} \times \R^{d} \times \R^{d}}{(T-t)\langle D\psi(v), w \rangle\ \mu_{t}^{j}(dx,dv,dw)dt} \Bigr| \leq TC^{1/2}.	
\end{align*}
Thus, by \cref{eq:attempt1} we obtain that
\begin{align*}
\Bigl|\int_{\T^{d} \times \R^{d} \times \R^{d}}{\psi(v)\ \mu_{t}^{j}(dx,dv,dw)dt}	\Bigr| \leq C, 
\end{align*}
for some new constant $C$. 
If we choose  $\psi_{n}$ such that $\psi_n(v)$ increases in $n$ and converges to $|v|$, we get therefore
\begin{align*}
\int_{0}^{T}\int_{\T^{d} \times \R^{d} \times \R^{d}}{|v|\ \mu_{t}^{j}(dx,dv,dw)dt} \leq C. 
\end{align*}
This implies that $\{\mu_{t}^{j}\}_{j \in \N}$ is tight and, up to a subsequence still denoted by $\mu_{t}^{j}$, converges to some $\bar\mu \in \C^{T}(m_{0})$. Then, we have that
\begin{align*}
& \inf_{\mu \in \C^{T}(m_{0})} \int_{0}^{T}\int_{\T^{d} \times \R^{d} \times \R^{d}}{\left(\frac{1}{2}|w|^{2}+F(x,v, m^{T}_{t}) \right)\ \mu_{t}(dx,dv,dw)dt}	
\\
+ & \int_{\T^{d} \times \R^{d} \times \R^{d}}{g(x,v,m^{T}_{T})\ \mu_{T}(dx,dv,dw)}
\\
\leq & \int_{0}^{T}\int_{\T^{d} \times \R^{d} \times \R^{d}}{\left(\frac{1}{2}|w|^{2}+F(x,v, m^{T}_{t}) \right)\ \bar\mu_{t}(dx,dv,dw)dt}	
\\
+ & \int_{\T^{d} \times \R^{d} \times \R^{d}}{g(x,v,m^{T}_{T})\ \bar\mu_{T}(dx,dv,dw)}
\\
\leq  & \lim_{j \to +\infty}  \int_{0}^{T}\int_{\T^{d} \times \R^{d} \times \R^{d}}{\left(\frac{1}{2}|w|^{2}+F_{R_{j}}(x,v, m^{T}_{t}) \right)\ \mu_{t}^{j}(dx,dv,dw)dt}	
\\
+ & \int_{\T^{d} \times \R^{d} \times \R^{d}}{g_{R_{j}}(x,v,m^{T}_{T})\ \mu_{T}^{j}(dx,dv,dw)}
\\
= & \liminf_{j \to +\infty} \inf_{\mu \in \C^{T}(m_{0})}  \int_{0}^{T}\int_{\T^{d} \times \R^{d} \times \R^{d}}{\left(\frac{1}{2}|w|^{2}+F_{R_{j}}(x,v, m^{T}_{t}) \right)\ \mu_{t}(dx,dv,dw)dt}	
\\
+ & \int_{\T^{d} \times \R^{d} \times \R^{d}}{g_{R_{j}}(x,v,m^{T}_{T})\ \mu_{T}(dx,dv,dw)}.
\end{align*}
This completes the proof of  equality \eqref{eq:lintegral}.

{ It remain to check the existence of a minimizer $\bar \mu^T\in \C^T(m_0)$ of the problem in the left-hand side such that $m^T_{t} = \pi \sharp \bar\mu^T_{t}$. For this, let $\gamma_{(x,v)}$ denote the measurable selection of minimizers of $u^T(0, x,v)$ in \eqref{eq:MFGvaluefunction} as  in \Cref{lem:miniselection} below and define the measure
\begin{equation*}
\bar \mu^T_{t} = \left((x,v)\to (\gamma_{(x,v)}(t), \dot\gamma_{(x,v)}(t), D_{v}u^{T}(t, \gamma_{(x,v)}(t), \dot\gamma_{(x,v)}(t))) \right) \sharp m_{0}	
\end{equation*}
for any $t \in [0,T]$. Note that by \cite[Lemma 3.5]{bib:YA} $\bar \mu^T_{t}$ is well-defined since $u(t,x,\cdot)$ is differentiable along the optimal trajectory $\gamma_{(x,v)}$ with 
\begin{equation*}
\ddot\gamma_{(x,v)}(t)=D_{v}u^{T}(t, \gamma_{(x,v)}(t), \dot\gamma_{(x,v)}(t)), \quad t \in [0,T]
\end{equation*}
In particular, it is easy to see that $\bar \mu^T \in \C^{T}(m_{0})$ and moreover, by \cite[Proposition 4.2]{bib:YA} we have that $m^{T}_{t}=\pi \sharp \bar \mu^T_{t}$ since $m^{T}_{t}=((x,v)\to (\gamma_{(x,v)}(t), \dot \gamma_{(x,v)}(t))) \sharp m_{0}$. 
%
%
By the representation formula of the value function we have that
\begin{align*}
u^{T}(0,x,v) = & \int_{0}^{T}{\Big(\frac{1}{2}|\ddot\gamma_{(x,v)}(t)|^{2} + F(\gamma_{(x,v)}(t), \dot\gamma_{(x,v)}(t), m^{T}_{t}) \Big)\ dt} + g(\gamma_{(x,v)}(T), \dot\gamma_{(x,v)}(T), m^{T}_{T})	
\\
= & \int_{0}^{T}{\Big(\frac{1}{2}|D_{v}u^{T}(t, \gamma_{(x,v)}(t), \dot\gamma_{(x,v)}(t))|^{2} + F(\gamma_{(x,v)}(t), \dot\gamma_{(x,v)}(t), m^{T}_{t}) \Big)\ dt} 
\\
& \qquad +\  g(\gamma_{(x,v)}(T), \dot\gamma_{(x,v)}(T), m^{T}_{T}).	
\end{align*}
Integrating both side against the measure $m_{0}$ and using the definition of $\bar \mu^T$, we obtain that $\bar \mu^T$ satisfies the equality in \eqref{eq:lintegral} and therefore is optimal.  
}
\end{proof}

\begin{lemma}\label{lem:miniselection}
	Assume that $F$ satisfies {\bf (F1')} and {\bf (F2')}  and $g$ satisfies {\bf (G1)}. For $(x,v) \in \T^{d} \times \R^{d}$ let $\Gamma^{*}(x,v) \subset \Gamma_{0}(x,v)$ be the set of minimizers of problem \cref{eq:MFGvaluefunction} for $t=0$. Then, the set-valued map
		\begin{equation*}
	\Gamma^{*}: (\T^{d} \times \R^{d}, |\cdot|) \rightrightarrows (\Gamma, \| \cdot\|_{\infty}), \quad (x,v) \mapsto \Gamma^{*}(x,v)
	\end{equation*}
 has a measurable selection $\gamma_{(x,v)}$, i.e. $(x,v)\to \gamma_{(x,v)}$ is measurable and, for any $(x,v) \in \T^{d} \times \R^{d}$, $\gamma_{(x,v)} \in \Gamma^{*}(x,v)$. 
\end{lemma}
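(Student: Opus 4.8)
The plan is to reduce the statement to a classical measurable selection theorem. First I would establish that $\Gamma^*(x,v)\neq\emptyset$ for every $(x,v)$ by the direct method: given a minimizing sequence $(\gamma_n)$ for the problem \eqref{eq:MFGvaluefunction} at $t=0$, assumption {\bf (F2')} (which yields $F\ge -c_F$) and the boundedness of $g$ coming from {\bf (G1)} bound $\int_0^T\big(\tfrac12|\ddot\gamma_n|^2+|\dot\gamma_n|^\alpha\big)\,ds$; since $\dot\gamma_n(0)=v$ is prescribed, this bounds $(\gamma_n)$ in $H^2(0,T)$, so along a subsequence $\gamma_n\to\gamma$ in $C^1([0,T])$ and $\ddot\gamma_n\rightharpoonup\ddot\gamma$ weakly in $L^2(0,T)$. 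Using weak lower semicontinuity of the $L^2$-norm, the continuity of $F$ and $g$, the uniform bound $|F(\gamma_n,\dot\gamma_n,m^T_t)|\le c_F(1+\sup_n\|\dot\gamma_n\|_\infty^\alpha)$ for dominated convergence, and the $d_1$-continuity of $t\mapsto m^T_t$, one obtains that $\gamma$, extended to $[0,+\infty)$ by $\gamma(t)=\gamma(T)+(t-T)\dot\gamma(T)$ for $t\ge T$, is a minimizer. It therefore suffices to construct a measurable selection of the smaller map $\hat\Gamma^*(x,v)$ consisting of those minimizers that are affine on $[T,+\infty)$.

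Next I would show that $\hat\Gamma^*$ has compact values and closed graph. All elements of $\hat\Gamma^*(x,v)$ have the same cost $u^T(0,x,v)$, so by the coercivity estimate above they lie in a fixed bounded subset of $H^2(0,T)$, which is precompact in $C^1([0,T])$; composing with the (continuous) canonical-extension map $C^1([0,T])\to\Gamma$ shows that $\hat\Gamma^*(x,v)$ is relatively compact in $\Gamma$, and its closedness follows from the same lower-semicontinuity argument. Using in addition that $\gamma(t)=x+tv$ is an admissible competitor, the bound $u^T(0,x,v)\le Tc_F(1+|v|^\alpha)+\|g\|_\infty$ shows that $\hat\Gamma^*$ maps compact subsets of $\T^d\times\R^d$ into relatively compact subsets of $\Gamma$. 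Finally, if $((x_n,v_n),\gamma_n)\to((x,v),\gamma)$ in $(\T^d\times\R^d)\times\Gamma$ with $\gamma_n\in\hat\Gamma^*(x_n,v_n)$, then $\gamma(0)=x$, $\dot\gamma(0)=v$, $\gamma$ is affine on $[T,+\infty)$, the functional in \eqref{eq:MFGvaluefunction} is lower semicontinuous along $\gamma_n\to\gamma$, and $u^T(0,x_n,v_n)\to u^T(0,x,v)$ because $u^T(0,\cdot,\cdot)$ is continuous; hence the cost of $\gamma$ does not exceed $u^T(0,x,v)$, i.e. $\gamma\in\hat\Gamma^*(x,v)$, so $\mathrm{Graph}(\hat\Gamma^*)$ is closed.

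To conclude, I would recall that $\Gamma$ is a Polish space, being a closed subspace of $C([0,+\infty);\T^d)\times C([0,+\infty);\R^d)$ endowed with the topology of local uniform convergence of the curve and its derivative. A set-valued map between Polish spaces that is locally bounded (maps compact sets to relatively compact sets), has nonempty compact values and closed graph is upper semicontinuous, hence Borel measurable; the Kuratowski--Ryll-Nardzewski selection theorem then provides a Borel-measurable selection $(x,v)\mapsto\gamma_{(x,v)}\in\hat\Gamma^*(x,v)\subset\Gamma^*(x,v)$, which is exactly the map required.

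The only delicate point is that the topology on $\Gamma$ is the (weak) $C^1$-topology rather than the $H^2$-topology, so the lower semicontinuity of $\gamma\mapsto\int_0^T\big(\tfrac12|\ddot\gamma|^2+F(\gamma,\dot\gamma,m^T_t)\big)\,ds+g(\gamma(T),\dot\gamma(T),m^T_T)$ must be obtained by passing through weak $L^2$-convergence of the second derivatives; this single ingredient drives the nonemptiness and compactness of $\hat\Gamma^*(x,v)$ and the closedness of its graph, and everything else is routine.
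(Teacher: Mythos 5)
Your argument is correct and follows essentially the same route as the paper: show that the minimizer map has closed graph and then invoke a classical measurable selection theorem (the paper cites \cite[Lemma 4.1]{bib:CM}, \cite[Proposition 9.5]{bib:C} and \cite[Theorem A 5.2]{bib:SC} where you use Kuratowski--Ryll-Nardzewski). The details you supply---the direct method for nonemptiness, weak $L^{2}$ compactness of the second derivatives for lower semicontinuity, and the restriction to minimizers extended affinely after time $T$ so that the values are compact in $\Gamma$---are precisely the ``classical results from optimal control theory'' that the paper delegates to its references.
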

\begin{proof}
By using classical results from optimal control theory it is not difficult to see that $\Gamma^*$ has a closed graph, see for instance \cite[Lemma 4.1]{bib:CM}. Therefore, by \cite[Proposition 9.5]{bib:C} the set-valued map $(x,v) \rightrightarrows \Gamma^{*}(x,v)$ is measurable with closed values. This implies  by \cite[Theorem A 5.2]{bib:SC} the existence of a measurable selection $\gamma_{(x,v)} \in \Gamma^{*}(x,v)$.	
\end{proof}

\subsection{Convergence of the solution of the time dependent MFG system}  We now investigate the limit as the horizon $T\to +\infty$ of the time-dependent MFG problem. The main result of this subsection is the following proposition:  

\begin{proposition}[{\bf Convergence of MFG solution}]\label{prop:convergenceMFG}
	Assume that $F$ satisfies {\bf (F1')}, {\bf (F2')}, {\bf (F3')} with $\alpha=2$  and the monotonicity condition \eqref{monotonicity}, that $g$ satisfies {\bf (G1)} and that the initial distribution $m_{0}$ in \cref{eq:accMFG} belongs to $\mathcal P_2(\T^d\times \R^d)$. 
	Let $(u^{T}, m^{T})$ be a solution of the MFG system \cref{eq:accMFG} and let $(\bar\lambda, \bar\mu)$ be the solution of the ergodic MFG problem \cref{eq:Ergodicproblem}. Then
	\begin{equation*}
	\lim_{T \to +\infty} \frac{1}{T} \int_{\T^{d} \times \R^{d}}{u^{T}(0,x,v)\ m_{0}(dx,dv)}= \bar\lambda.
	\end{equation*}
\end{proposition}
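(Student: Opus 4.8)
We want to show that $\frac{1}{T}\int_{\T^d\times\R^d} u^T(0,x,v)\,m_0(dx,dv)\to\bar\lambda$. The key tool is the representation formula \eqref{eq:lintegral} from Theorem \ref{thm:lintegral}, which rewrites $\int u^T(0,\cdot)\,m_0$ as an infimum of a linear cost over the set $\C^T(m_0)$ of $T$-closed measures, realized by a minimizer $\bar\mu^T$ with $m^T_t=\pi\sharp\bar\mu^T_t$. Dividing by $T$, I would establish the two inequalities $\limsup_T\frac1T\int u^T(0,\cdot)\,m_0\le\bar\lambda$ and $\liminf_T\frac1T\int u^T(0,\cdot)\,m_0\ge\bar\lambda$ separately.

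\smallskip\noindent\textbf{Upper bound.} To get $\limsup\le\bar\lambda$, I would build an explicit competitor in $\C^T(m_0)$ out of the ergodic minimizer $\bar\mu$. The heuristic is: transport each initial point to a point in the support of $\bar\mu$ in time $o(T)$ (using the controllability-type estimates of Lemma \ref{lem:reachablecurve}, whose cost is $O(1)$ per unit of ``bridge'' and whose total cost is controlled because $m_0\in\mathcal P_2$), then ``sit inside'' a measure generated by $\bar\mu$ for the remaining time, and finally pay at most a bounded terminal cost because $g$ is bounded on $\T^d\times B_R$. More precisely, since $\bar\mu\in\C$ one can disintegrate/realize it (or an approximation of it, as in the proof of Proposition \ref{prop:caract}) by trajectories whose occupation over a long time window is close to $\bar\mu$; gluing the bridges in front of these trajectories produces $\mu^T\in\C^T(m_0)$ with
$$
\frac1T\Bigl(\int_0^T\!\!\int\bigl(\tfrac12|w|^2+F(x,v,m^T_t)\bigr)\mu^T_t\,dt+\int g(x,v,m^T_T)\mu^T_T\Bigr)\le \int\bigl(\tfrac12|w|^2+F(x,v,\pi\sharp\bar\mu)\bigr)\bar\mu+o(1)=\bar\lambda+o(1),
$$
provided $F(x,v,m^T_t)$ can be replaced by $F(x,v,\pi\sharp\bar\mu)$ up to $o(1)$ — which requires knowing $m^T_t\approx\pi\sharp\bar\mu$ in $d_1$ for most $t$, a fact that is \emph{not} available a priori. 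This coupling between the competitor's cost and the unknown flow $m^T$ is the crux: I would break it using the monotonicity assumption \eqref{monotonicity}, exactly as in the uniqueness proof of Proposition \ref{prop.exuniErgoMFGeq}. Namely, combining the representation for $\int u^T(0,\cdot)m_0$ with $\bar\mu^T$ as minimizer and with $\mu^T$ (built from $\bar\mu$) as competitor, and doing the symmetric computation with the ergodic problem, one gets an inequality of the form
$$
\frac1T\int u^T(0,\cdot)m_0 - \bar\lambda \;\le\; \frac1T\int_0^T\!\!\int\bigl(F(x,v,m^T_t)-F(x,v,\pi\sharp\bar\mu)\bigr)\bigl(\bar\mu^T_t-\bar\mu\bigr)\,dt + o(1),
$$
and monotonicity forces the main term to have a sign, yielding $\limsup\le\bar\lambda$ together with an $L^2$-in-$(t,x,v)$ control on $F(x,v,m^T_t)-F(x,v,\pi\sharp\bar\mu)$.

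\smallskip\noindent\textbf{Lower bound.} For $\liminf\ge\bar\lambda$, I would use the minimizer $\bar\mu^T\in\C^T(m_0)$ and its time-average $\nu^T:=\frac1T\int_0^T\bar\mu^T_t\,dt\in\PP_{\alpha,2}(\T^d\times\R^d\times\R^d)$. The $T$-closedness condition \eqref{eq:Tclosedcondition}, tested against time-independent $\varphi\in C^\infty_c(\T^d\times\R^d)$, gives
$$
\int\bigl(\langle D_x\varphi,v\rangle+\langle D_v\varphi,w\rangle\bigr)\nu^T = \frac1T\Bigl(\int\varphi\,\bar\mu^T_T-\int\varphi\,m_0\Bigr)\to 0,
$$
so every weak-$*$ limit point $\nu^*$ of $\nu^T$ (tight by the $|w|^2+|v|^\alpha$ bound, which follows from the value-function estimates as in Lemma \ref{lem:measureconv}) is a closed measure. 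Then
$$
\frac1T\int u^T(0,\cdot)m_0 = \frac1T\int_0^T\!\!\int\bigl(\tfrac12|w|^2+F(x,v,m^T_t)\bigr)\bar\mu^T_t\,dt+\frac1T\int g\,\bar\mu^T_T \ge \int\bigl(\tfrac12|w|^2+F(x,v,\pi\sharp\bar\mu)\bigr)\nu^T+o(1)
$$
after replacing $F(\cdot,m^T_t)$ by $F(\cdot,\pi\sharp\bar\mu)$ using the $L^2$-control obtained above and Cauchy–Schwarz, and dropping the (bounded, $O(1/T)$) terminal term; passing to the limit and using that $\nu^*\in\C$ together with $\bar\lambda=\inf_{\C}\int(\tfrac12|w|^2+F(\cdot,\pi\sharp\bar\mu))$ gives $\liminf\ge\bar\lambda$. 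Finally, the locally uniform convergence of $T^{-1}u^T(0,\cdot,\cdot)$ to $\bar\lambda$ follows by applying the same argument with $m_0$ replaced by $\delta_{(x,v)}$ (the estimates are uniform on compact sets of initial data, exactly as in Lemma \ref{lem.bdoscVT}) and invoking the bounded-oscillation property; I would present this as a short corollary at the end.

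\smallskip\noindent\textbf{Main obstacle.} The genuine difficulty is the circular dependence: the cost integrand $F(x,v,m^T_t)$ depends on the very flow $m^T$ we are trying to understand, so one cannot compare $\int u^T(0,\cdot)m_0/T$ directly to the ergodic value without first controlling $m^T_t$ against $\pi\sharp\bar\mu$. Breaking this loop is precisely where the monotonicity condition \eqref{monotonicity} is essential — it is used not (only) for uniqueness but to produce a quantitative $L^2$ estimate $\frac1T\int_0^T\!\!\int|F(x,v,m^T_t)-F(x,v,\pi\sharp\bar\mu)|^2\,dxdv\,dt\to 0$, which then feeds both the upper and the lower bound. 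The remaining technical points (building $T$-closed competitors from closed measures, tightness of time-averaged occupation measures, handling the non-compact velocity variable) are routine given Theorem \ref{thm:lintegral}, Lemma \ref{lem:reachablecurve}, and the estimates already developed in Section \ref{sec:ErgodicBehavior}.
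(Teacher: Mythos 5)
Your overall strategy is the same as the paper's: represent $\int u^T(0,\cdot)\,m_0$ via Theorem \ref{thm:lintegral}, use bridging competitors built from $\bar\mu$ (Lemmas \ref{lem:reachablecurve}, \ref{lem:linkmeasure}) for the upper bound, use monotonicity through a duality-gap computation to control $F(\cdot,\cdot,m^T_t)-F(\cdot,\cdot,\bar m)$, and pass to the limit on (suitably closed) time-averaged occupation measures for the lower bound. But there is a genuine gap at the step you describe as ``replacing $F(\cdot,m^T_t)$ by $F(\cdot,\bar m)$ using the $L^2$-control and Cauchy--Schwarz.'' The monotonicity condition \eqref{monotonicity} only yields an $L^2(dx\,dv\,dt)$ bound on the difference of the couplings, whereas the substitution has to be performed inside integrals of the form $\int_0^T\!\!\int (F(x,v,m^T_t)-F(x,v,\bar m))\,\mu_t(dx,dv,dw)\,dt$, where $\mu_t$ (the optimal $T$-closed measure, or a competitor) is a general probability measure with no $L^2$ density with respect to Lebesgue measure; Cauchy--Schwarz simply does not pair an $L^2(dx\,dv)$ function with such a measure. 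This is precisely why the hypothesis {\bf (F3')} appears in the statement and why the paper proves the interpolation inequality of Lemma \ref{lem.interpol}: the Lipschitz bound in {\bf (F3')} upgrades the $L^2(dx\,dv)$ estimate to the velocity-weighted pointwise estimate \eqref{eq:energyest}, $\sup_{x,v}|F(x,v,m^T_t)-F(x,v,\bar m)|^{2d+2}/(1+|v|^2)^{2d}$ integrated in time, which can then be paired with the uniformly bounded second moments $\int(1+|v|^2)\,d\mu_t$ via Young's inequality (see \eqref{eq:11bis}). Your proposal never invokes {\bf (F3')}, so this key mechanism is missing and the swap step as written fails.

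A secondary point: the ``symmetric computation with the ergodic problem'' needs, at finite $T$, a genuinely closed measure built from the time-dependent minimizer, since $\bar\lambda$ is an infimum over $\C$ and cannot be tested against your time-average $\nu^T$, which is only approximately closed (defect $O(1/T)$ per test function). The paper resolves this by modifying the optimal trajectories so that they return to their initial state (Lemma \ref{lem.jkhzrnedg} and Remark \ref{rem.jkhzrnedg}), producing a closed measure $\hat\mu^T$ with a quantified extra cost $C(\lambda^2+\lambda^{-2}T)$, later optimized with $\lambda=T^{1/4}$. Your limit-point argument ($\nu^*\in\C$ by letting $T\to\infty$ in the closedness defect, plus lower semicontinuity of the cost) can replace this in a qualitative form and is a legitimate variant for the lower bound, but you should make explicit that you are using it in place of a finite-$T$ closed competitor, and check the uniform second-moment bound that makes both the tightness and the passage to the limit in the term $\langle D_v\varphi,w\rangle$ rigorous. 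Finally, the claim that local uniform convergence follows ``as in Lemma \ref{lem.bdoscVT}'' overlooks that the running cost now depends on $t$ through $m^T_t$, so the simple time-shift comparison breaks down; the paper needs the energy estimate again to prove the oscillation bound (Lemma \ref{lem.unifcontuT}), though this part belongs to the proof of Theorem \ref{thm:theo.main2} rather than to the statement under review.
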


Throughout the section, we assume that the assumption of Proposition \ref{prop:convergenceMFG} are in force. 
The proof of the proposition---given at the end of the subsection---is made at the level of the closed and $T-$closed measures. For this we first need to discuss how to manipulate them. The first lemma is a straightforward application of the definition of $T-$closed measures:

\begin{lemma}[{\bf Concatenation of $T$-closed measure}]\label{lem:concatenation}
	Let $T,T'>0$, $m_{0} \in \mathcal P_2(\T^d\times \R^d)$,  $\mu_{1} \in \C^{T}(m_{0})$ and  $\mu_{2} \in \C^{T'}(m_{1})$ with $m_{1}=\pi \sharp \mu_{1}(T)$. Then, the measure 
	\begin{align*}
	\mu_{t}:=
	\begin{cases}
		\mu_{1}(t),& \quad t \in [0,T]
		\\
		\mu_{2}(t-T),& \quad t \in (T, T+T']
	\end{cases}
	\end{align*}
belongs to $\C^{T+T'}(m_{0})$. 
\end{lemma}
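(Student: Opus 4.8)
The plan is to verify directly that the concatenated flow $\mu$ satisfies the defining identity \eqref{eq:Tclosedcondition} of a $(T+T')$-closed measure associated with $m_0$. First I would check that $\mu \in C([0,T+T']; \PP_1(\T^d\times \R^d\times \R^d))$: on $[0,T]$ it coincides with $\mu_1$ and on $[T,T+T']$ with $t\mapsto \mu_2(t-T)$, and the two pieces agree at $t=T$ precisely when $\mu_1(T)=\mu_2(0)$; the latter holds because $\mu_2\in\C^{T'}(m_1)$ forces (by taking test functions $\varphi$ supported near $t=0$ in the definition applied to $\mu_2$, or rather by the continuity built into the definition together with the initial condition $\pi\sharp \mu_2(0)=m_1$) — actually continuity at the junction is part of what we must observe: since $\mu_2\in C([0,T'];\dots)$ we only know $\pi\sharp\mu_2(0)=m_1=\pi\sharp\mu_1(T)$, so I would point out that the concatenation is continuous in the projected sense; if one wants genuine continuity of the full measure one should note that the definition of $T$-closed measure only constrains the flow through \eqref{eq:Tclosedcondition} and the endpoint marginals, so it suffices to work with the identity. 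The finiteness of the first moments on $[0,T+T']$ is immediate from the bounds on $\mu_1$ and $\mu_2$.

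Next, the core computation: fix $\varphi\in C^\infty_c([0,T+T']\times\T^d\times\R^d)$ and split the time integral in \eqref{eq:Tclosedcondition} for $\mu$ as $\int_0^{T}+\int_T^{T+T'}$. On $[0,T]$ the integrand is exactly the one appearing in the defining identity for $\mu_1$ with the test function $\varphi$ restricted to $[0,T]\times\T^d\times\R^d$ (which is smooth but not compactly supported in time at $t=T$ — here I would invoke the standard fact, or a one-line density/truncation argument, that the identity \eqref{eq:Tclosedcondition} extends from $C^\infty_c$ to all $\varphi\in C^\infty$ with the appropriate growth, exactly as in \Cref{rem:lipschitzclosed}, so that one gets boundary terms at $t=0$ and $t=T$). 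This yields
\[
\int_0^T\!\!\int (\partial_t\varphi+\langle D_x\varphi,v\rangle+\langle D_v\varphi,w\rangle)\,\mu_t\,dt
= \int \varphi(T,\cdot)\,\mu_1(T) - \int \varphi(0,\cdot)\,m_0.
\]
On $[T,T+T']$, after the change of variables $s=t-T$ the integrand becomes the one for $\mu_2$ with test function $\psi(s,x,v):=\varphi(s+T,x,v)$, giving
\[
\int_0^{T'}\!\!\int (\partial_s\psi+\langle D_x\psi,v\rangle+\langle D_v\psi,w\rangle)\,\mu_2(s)\,ds
= \int \psi(T',\cdot)\,\mu_2(T') - \int \psi(0,\cdot)\,m_1.
\]
Since $\psi(T',\cdot)=\varphi(T+T',\cdot)$, $\psi(0,\cdot)=\varphi(T,\cdot)$, $\mu_2(T')=\mu_{T+T'}$, and $\int\psi(0,\cdot)\,m_1=\int\varphi(T,\cdot)\,\mu_1(T)$ (because $m_1=\pi\sharp\mu_1(T)$ and $\varphi$ depends only on $(t,x,v)$), adding the two displays makes the terms at $t=T$ cancel and leaves exactly $\int\varphi(T+T',\cdot)\,\mu_{T+T'} - \int\varphi(0,\cdot)\,m_0$, which is the defining identity for $\mu\in\C^{T+T'}(m_0)$.

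The only mildly delicate point—and the one I would spell out—is the extension of \eqref{eq:Tclosedcondition} to test functions that are smooth and compactly supported in $(x,v)$ but merely smooth (not compactly supported) at the temporal endpoints $t=0,T,T'$; this is handled by multiplying $\varphi$ by a cutoff $\chi_\varepsilon(t)$ that is $1$ away from the endpoints and $0$ in $\varepsilon$-neighborhoods of them, applying \eqref{eq:Tclosedcondition} to $\chi_\varepsilon\varphi$, and passing to the limit $\varepsilon\to0$ using the continuity of $t\mapsto\mu_t$ in $\PP_1$ together with the first-moment bounds to control $\partial_t\chi_\varepsilon\cdot\varphi$ (whose mass concentrates at the endpoints and converges to the corresponding boundary integrals). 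Everything else is bookkeeping, so the lemma follows.
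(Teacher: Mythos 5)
Your verification is correct and is precisely the ``straightforward application of the definition'' that the paper leaves unwritten: split the time integral at $t=T$, shift time on the second piece, and cancel the boundary terms at $t=T$ using $m_{1}=\pi\sharp\mu_{1}(T)$ together with the fact that $\varphi$ does not depend on $w$ (and, as you note, continuity of the full concatenated flow at $t=T$ is immaterial since only the identity \eqref{eq:Tclosedcondition} and the endpoint terms enter). The one superfluous step is your ``delicate point'': because $[0,T]\times\T^{d}$ is compact, membership in $C^{\infty}_{c}([0,T]\times\T^{d}\times\R^{d})$ only constrains the support in $v$ and does not force vanishing at $t=0,T$ (the boundary terms in \eqref{eq:Tclosedcondition} presuppose exactly this), so the restriction of $\varphi$ to $[0,T]$ and the shifted test function $\psi$ are already admissible and no cutoff in time is needed.
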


Next we explain how to link two measures by a $T-$closed measure:

\begin{lemma}[{\bf Linking two measures by a $T$-closed measure}]\label{lem:linkmeasure}
	Let $m_{0}^{1}$ and $m_{0}^{2}$ belong to $\mathcal P_2(\T^d\times \R^d)$. Then, there exists $\mu^{m^1_0\to m^2_0}\in \C^{T=1}(m_{0}^1)$ such that $m_{0}^{2}=\pi \sharp \mu^{m^1_0\to m^2_0}_1$ and
\begin{align}\label{estimum1m2}
& \int_0^1 \int_{\T^d\times \R^{2d}} (\frac12 |w|^2+ c_F(1+|v|^2)) \ \mu^{m^1_0\to m^2_0}_t(dx,dv,dw)dt
\leq C_2(1+ M_2(m_0^1)+M_2(m_0^2)), 
\end{align}
where $M_2(m)= \int_{\T^d\times \R^d} |v|^2dm(x,v)$ (for $m\in \PP_2(\T^d\times \R^d)$) and where $C_2$ depends only on $\alpha$ and $c_F$. 
\end{lemma}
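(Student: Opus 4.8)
The plan is to construct $\mu^{m_0^1 \to m_0^2}$ explicitly by transporting mass along cubic curves, mirroring the construction in \Cref{lem:reachablecurve} but now at the level of measures. First I would fix an optimal transport plan $\pi^* \in \Pi(m_0^1, m_0^2)$ for the Wasserstein distance $d_2$ on $\T^d \times \R^d$; that is, $\pi^*$ is a probability measure on $(\T^d\times \R^d)\times(\T^d\times \R^d)$ with marginals $m_0^1$ and $m_0^2$ and $\int |(x_0,v_0)-(x_1,v_1)|^2\, \pi^*(d(x_0,v_0),d(x_1,v_1)) = d_2(m_0^1,m_0^2)^2 \leq 2(M_2(m_0^1)+M_2(m_0^2))$ (using $|a-b|^2\leq 2|a|^2+2|b|^2$ and that the torus part is bounded). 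For each pair $((x_0,v_0),(x_1,v_1))$ in the support of $\pi^*$, let $\sigma_{(x_0,v_0),(x_1,v_1)}:[0,1]\to\R^d$ be the cubic curve from \Cref{lem:reachablecurve} (with $\theta=1$) joining $(x_0,v_0)$ to $(x_1,v_1)$, so that $\sigma(0)=x_0$, $\dot\sigma(0)=v_0$, $\sigma(1)=x_1$, $\dot\sigma(1)=v_1$, and this curve depends measurably (indeed smoothly) on the endpoints.

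Next I would define, for $t\in[0,1]$,
\begin{equation*}
\mu^{m_0^1\to m_0^2}_t := \Bigl( \bigl((x_0,v_0),(x_1,v_1)\bigr) \mapsto \bigl(\sigma_{(x_0,v_0),(x_1,v_1)}(t),\, \dot\sigma_{(x_0,v_0),(x_1,v_1)}(t),\, \ddot\sigma_{(x_0,v_0),(x_1,v_1)}(t)\bigr)\Bigr)\sharp \pi^*.
\end{equation*}
One checks $\mu^{m_0^1\to m_0^2}\in C([0,1];\PP_1(\T^d\times\R^d\times\R^d))$ from continuity of the flow in $t$ and uniform integrability of the velocity/acceleration moments (which follows from the explicit bound below). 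The $T$-closedness \eqref{eq:Tclosedcondition} with $T=1$ is immediate: for $\varphi\in C^\infty_c([0,1]\times\T^d\times\R^d)$, along each curve $\sigma$ the chain rule gives $\frac{d}{dt}\varphi(t,\sigma(t),\dot\sigma(t)) = \partial_t\varphi + \langle D_x\varphi,\dot\sigma\rangle + \langle D_v\varphi,\ddot\sigma\rangle$, and integrating over $t\in[0,1]$ and then against $\pi^*$ yields exactly the required identity, with the boundary terms producing $\int\varphi(1,\cdot)\,\mu_1 - \int\varphi(0,\cdot)\,m_0^1$ since the time-$0$ marginal of $\mu$ (projected to $(x,v)$) is $m_0^1$ and the time-$1$ marginal is $m_0^2 = \pi\sharp\mu_1$.

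For the estimate \eqref{estimum1m2}, I would reuse the computation from the proof of \Cref{lem:reachablecurve}: with $\theta=1$, the cubic curve satisfies $|\ddot\sigma(t)|^2 \leq C(|x_1-x_0|^2 + |v_0|^2 + |v_1|^2)$ and $|\dot\sigma(t)|^2 \leq C(|x_1-x_0|^2+|v_0|^2+|v_1|^2)$ pointwise on $[0,1]$, hence (using $\alpha=2$ here, so $|\dot\sigma|^\alpha=|\dot\sigma|^2$) $\int_0^1 \bigl(\tfrac12|\ddot\sigma|^2 + c_F(1+|\dot\sigma|^2)\bigr)dt \leq C_2\bigl(1 + |x_1-x_0|^2 + |v_0|^2 + |v_1|^2\bigr)$ for a constant $C_2$ depending only on $c_F$ (and trivially on $\alpha=2$). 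Integrating this against $\pi^*$, and bounding $\int |x_1-x_0|^2\,\pi^* \leq \operatorname{diam}(\T^d)^2$, $\int|v_0|^2\,\pi^* = M_2(m_0^1)$, $\int|v_1|^2\,\pi^* = M_2(m_0^2)$, gives \eqref{estimum1m2}.

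The construction is essentially routine; the only mild technical point — the ``hard part'', such as it is — is confirming the measurability (so that the pushforward is well-defined) and the time-continuity of $t\mapsto\mu_t$ in $\PP_1$. Both are handled by observing that the map $((x_0,v_0),(x_1,v_1),t)\mapsto(\sigma(t),\dot\sigma(t),\ddot\sigma(t))$ is jointly continuous (the coefficients $B,C$ in \Cref{lem:reachablecurve} are affine in the endpoints), and that the uniform-in-$t$ quadratic bound above provides the equi-integrability needed to upgrade weak-$*$ continuity to $\PP_1$-continuity. I would remark that an alternative, entirely equivalent construction uses linear interpolation of positions with a quintic correction to match velocities; the cubic choice is simplest and directly quotes \Cref{lem:reachablecurve}.
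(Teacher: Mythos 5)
Your construction is correct and is essentially the paper's own proof: both push a transport plan in $\Pi(m_0^1,m_0^2)$ forward along the cubic connecting curves of \Cref{lem:reachablecurve} (with $\theta=1$), check $T$-closedness by the chain rule along these curves, and obtain \eqref{estimum1m2} from the same quadratic bound on $\dot\sigma,\ddot\sigma$. The only (immaterial) difference is that you take a $d_2$-optimal plan, while the paper uses an arbitrary transport plan, since optimality plays no role in the estimate.
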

\begin{proof}
Let $(x_0,v_0) \in \supp(m_{0}^{1})$ and let $(x,v) \in \supp(m_{0}^{2})$. Then, following the proof of  \Cref{lem:reachablecurve},  there exists a curve $\sigma _{(x_0,v_0)}^{(x,v)}:[0,1] \to \T^{d}$	such that $\sigma _{(x_0,v_0)}^{(x,v)}(0)=x_0$, $\dot\sigma _{(x_0,v_0)}^{(x,v)}(0)=v_0$ and $\sigma _{(x_0,v_0)}^{(x,v)}(1)=y$, $\dot\sigma _{(x_0,v_0)}^{(x,v)}(1)=w$ with 
\begin{equation}\label{kajhezskndc}
\int_0^1 (\frac12 |\ddot \sigma _{(x_0,v_0)}^{(x,v)}(t)|^2 + c_F(1+ |\dot \sigma _{(x_0,v_0)}^{(x,v)}(t)|^2)) dt \leq 
 C_{2}(1+|v|^2+|v_0|^2). 
\end{equation}
 Moreover, by construction, 
$\sigma$ depends continuously on $(x_0,v_0,x,v)$. 
Let $\lambda\in \Pi(m_0^1,m_0^2)$ be a transport plan between $m_{0}^{1}$ and $m_{0}^{2}$ (see \eqref{def.transportplan}). We define the measure $\mu^{m^1_0\to m^2_0} \in \C^1(m_0^1)$ 
by 
$$
\int_{\T^d\times \R^d\times \R^d} \varphi(x,v,w) \mu^{m^1_0\to m^2_0}_t (dx,dv,dw) = 
\int_{(T^d\times \R^d)^2} \varphi(\sigma _{(x_0,v_0)}^{(x,v)}(t), \dot \sigma _{(x_0,v_0)}^{(x,v)}(t), \ddot \sigma _{(x_0,v_0)}^{(x,v)}(t))\ \lambda(dx_0,dv_0,dx,dv)
$$   
for any $\varphi\in C^\infty_c(\T^d\times \R^d\times \R^d)$. 
Then, on easily checks that  $m_{0}^{2}=\pi \sharp \mu^{m^1_0\to m^2_0}_1$ and that,  by \eqref{kajhezskndc}: 
\begin{align*}
& \int_0^1 \int_{\T^d\times \R^{2d}} (\frac12 |w|^2+ c_F(1+|v|^2))\ \mu^{m^1_0\to m^2_0}_t(dx,dv,dw)dt\\
& \qquad \leq 
C_{2}\int_0^1\int_{\T^d\times \R^{2d}}(1+|v|^2+|v_0|^2)\ \mu^{m^1_0\to m^2_0}_t(dx,dv,dw)dt = 
C_2(1+ M_2(m_0^1)+M_2(m_0^2)). 
\end{align*}
\end{proof}

\begin{proposition}[{\bf Energy estimate}]\label{prop:energyestimate}
Under the notation and assumption of Proposition \ref{prop:convergenceMFG}, 
 there exists a constant $C \geq 0$ (independent of $T$) such that 
\begin{equation}\label{eq:energyest}
\int_0^T \sup_{(x,v)\in \T^d\times \R^d} \frac{| F(x,v, m^T_t)- F(x,v, \bar{m})|^{2d+2}}{(1+|v|^2)^{2d}} dt \leq\  C T^{\frac{1}{2}},
\end{equation}
where $\bar{m}=\pi \sharp \bar\mu$, with $\pi(x,v,w)=(x,v)$. 
	\end{proposition}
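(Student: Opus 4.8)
\textbf{Proof proposal for Proposition~\ref{prop:energyestimate}.}

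The plan is to exploit the variational structure in two directions and subtract. On the one hand, using the representation \eqref{eq:lintegral} of Theorem~\ref{thm:lintegral}, the quantity $\int_{\T^d\times\R^d}u^T(0,x,v)\,m_0(dx,dv)$ equals the minimum over $\mu\in\C^T(m_0)$ of the running cost with coupling $F(\cdot,\cdot,m^T_t)$ plus terminal cost $g(\cdot,\cdot,m^T_T)$, and the optimal measure is $\bar\mu^T$ with $\pi\sharp\bar\mu^T_t=m^T_t$. On the other hand, one can plug into this minimization a competitor built from the ergodic measure $\bar\mu$: namely, use Lemma~\ref{lem:linkmeasure} to link $m_0$ to $\bar m=\pi\sharp\bar\mu$ on $[0,1]$, then run $\bar\mu$ (extended $1$-periodically, using that $\bar\mu\in\C$ so its ``static'' version lies in $\C^{T'}(\bar m)$ for any $T'$), concatenating via Lemma~\ref{lem:concatenation}. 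Because $\bar\mu$ realizes the ergodic infimum and is closed, the cost of this competitor with coupling $F(\cdot,\cdot,\bar m)$ is $\bar\lambda T + O(1)$, and the mismatch caused by evaluating at $m^T_t$ instead of $\bar m$ is controlled by $\int_0^T\!\int (F(x,v,m^T_t)-F(x,v,\bar m))\,\bar\mu_t(dx,dv,dw)\,dt$ plus a similar boundary term in $g$. This gives an upper bound
\[
\int_{\T^d\times\R^d}u^T(0,x,v)\,m_0(dx,dv)\ \le\ \bar\lambda T + \int_0^T\!\!\int_{\T^d\times\R^{2d}}\big(F(x,v,m^T_t)-F(x,v,\bar m)\big)\,\bar\mu_t\,dt + C.
\]
Symmetrically, testing the ergodic minimization defining $\bar\lambda$ with the time-averaged projection of $\bar\mu^T$ (which, after the usual argument, is an admissible closed measure up to $O(1/T)$ boundary corrections), and using the representation of $u^T(0,\cdot,\cdot)$ again, yields the reverse bound
\[
\int_{\T^d\times\R^d}u^T(0,x,v)\,m_0(dx,dv)\ \ge\ \bar\lambda T + \int_0^T\!\!\int_{\T^d\times\R^{2d}}\big(F(x,v,m^T_t)-F(x,v,\bar m)\big)\,\bar\mu^T_t\,dt - C.
\]

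Subtracting the two inequalities, the $\bar\lambda T$ terms cancel and, recalling $\pi\sharp\bar\mu^T_t=m^T_t$ and $\pi\sharp\bar\mu=\bar m$, we obtain
\[
\int_0^T\!\!\int_{\T^d\times\R^d}\big(F(x,v,m^T_t)-F(x,v,\bar m)\big)\,\big(m^T_t(dx,dv)-\bar m(dx,dv)\big)\,dt\ \le\ C .
\]
Now the monotonicity assumption \eqref{monotonicity} applied with $m_1=m^T_t$, $m_2=\bar m$ turns the left-hand side into a lower bound $M_F\int_0^T\!\int_{\T^d\times\R^d}|F(x,v,m^T_t)-F(x,v,\bar m)|^2\,dx\,dv\,dt$, so
\[
\int_0^T\!\!\int_{\T^d\times\R^d}|F(x,v,m^T_t)-F(x,v,\bar m)|^2\,dx\,dv\,dt\ \le\ C .
\]
This is an $L^2$-in-space-time bound; the proposition asks instead for a bound on a weighted sup-norm raised to the power $2d+2$. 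To bridge this gap I would use an interpolation/Gagliardo--Nirenberg-type argument in the $(x,v)$ variables: by {\bf (F3')} with $\alpha=2$, the map $(x,v)\mapsto F(x,v,m^T_t)-F(x,v,\bar m)$ has gradient bounded by $C(1+|v|^2)$, hence after dividing by $(1+|v|^2)$ it is Lipschitz with a controlled constant; combining a quantitative Lipschitz bound with the $L^2$ control via a Sobolev embedding in dimension $2d$ converts the $L^2$ bound into an $L^\infty$ bound at the cost of the exponent $2d+2$ and of a factor that, integrated in time and combined with $\int_0^T(\cdots)dt\le C$ through Cauchy--Schwarz (or H\"older), produces the $T^{1/2}$ on the right-hand side.

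The main obstacle I anticipate is precisely this last step: passing from the clean $L^2_{t,x,v}$ estimate produced by monotonicity to the weighted $L^\infty_{x,v}$-to-the-$2d+2$ estimate integrated in time. One must be careful that the Sobolev embedding constant in $\R^d\times\R^d$ and the velocity weights $(1+|v|^2)^{\pm}$ interact correctly, that the spatial gradient bound from {\bf (F3')} is genuinely uniform in $m$, and that the exponent $2d+2$ and the power $T^{1/2}$ come out exactly — this is where the interpolation exponents must be chosen and where a H\"older split in the time integral (some power of the $L^\infty$ quantity against some power of the $L^2$ quantity, using $\int_0^T\!\int|F-\bar F|^2\le C$) produces $T^{1/2}$ rather than $T$ or $T^0$. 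The variational comparison producing the $O(1)$ bounds above is, by contrast, routine given Theorem~\ref{thm:lintegral}, Lemma~\ref{lem:concatenation} and Lemma~\ref{lem:linkmeasure}, modulo checking that the periodic extension of $\bar\mu$ is admissible and that the $\mathcal P_2$ assumption on $m_0$ makes the linking cost finite.
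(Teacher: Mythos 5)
Your overall architecture (two-sided variational comparison between the $T$-dependent problem of \Cref{thm:lintegral} and the ergodic problem, then the monotonicity condition \eqref{monotonicity}, then an interpolation step) is the same as the paper's, but there is a genuine gap in the reverse inequality, and it is precisely the point where the exponent $T^{1/2}$ is decided. You claim that the time-averaged projection of $\bar\mu^T$ is ``an admissible closed measure up to $O(1/T)$ boundary corrections'' and hence tests the ergodic minimization with only an $O(1)$ error. In this acceleration setting that step is not justified: the ergodic infimum is taken over measures that are \emph{exactly} closed, and the occupation measure of the optimal trajectories $\gamma_{(x,v)}$ is not closed unless each trajectory returns to its initial position \emph{and velocity}. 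The paper achieves this by modifying each $\gamma_{(x,v)}$ via Lemma \ref{lem.jkhzrnedg} (see Remark \ref{rem.jkhzrnedg}), and the modification cost is of order $C_3(1+|v|)^2(\lambda^2+\lambda^{-2}T)$, where $\lambda$ is the velocity threshold at which the trajectory is cut; optimizing $\lambda=T^{1/4}$ yields an error $O(T^{1/2})$, not $O(1)$. Consequently the monotonicity argument only gives $\int_0^T\int_{\T^d\times\R^d}|F(x,v,m^T_t)-F(x,v,\bar m)|^2\,dx\,dv\,dt\le CT^{1/2}$, which is exactly why \eqref{eq:energyest} carries the factor $T^{1/2}$. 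Your claimed $O(1)$ bound would prove a strictly stronger estimate than the paper's, and nothing in your outline supports it: approximate closedness in duality with bounded test functions does not let you compare with the exact infimum over $\C$ quantitatively at finite $T$.

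The final step is also off target, although less seriously. Once the $L^2$ space-time bound $CT^{1/2}$ is in hand, no Sobolev embedding or H\"older split in time is needed (and your attempt to ``produce'' $T^{1/2}$ by such a split out of an $O(1)$ bound is a symptom of the gap above): the paper applies, for each fixed $t$, the elementary weighted interpolation of \Cref{lem.interpol}, which uses only the Lipschitz bound from {\bf (F3')} (with $\alpha=2$) to control $\sup_{(x,v)}|F(x,v,m^T_t)-F(x,v,\bar m)|^{2d+2}/(1+|v|^2)^{2d}$ by a constant times $\int_{\T^d\times\R^d}|F(x,v,m^T_t)-F(x,v,\bar m)|^2\,dx\,dv$, and then simply integrates in $t$; the exponents $2d+2$ and the weight $(1+|v|^2)^{2d}$ come from that pointwise lemma, and the $T^{1/2}$ is inherited directly from the $L^2$ bound. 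Your upper-bound inequality (competitor built from Lemma \ref{lem:linkmeasure} and Lemma \ref{lem:concatenation}) is fine and matches the paper.
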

	
\begin{proof} The proof consists in building from $\bar \mu$ and $\mu^T$ competitors in problems \eqref{eq:Ergodicproblem} and \eqref{eq:lintegral} respectively. Let us recall that $\mu^T$ and $\bar \mu$ are minimizers for these respective problems. \\

We start with problem \eqref{eq:lintegral}. Fix $T\geq 2$. We define the measure $\tilde\mu^T$ by 
		\begin{align}\label{eq:measure1}
		\tilde\mu^T_{t}=
		\begin{cases}
			\mu^{m_{0} \to \bar{m}}_{t}, & \quad t \in [0,1] \\
			\bar\mu, & \quad t \in (1, T],
		\end{cases}
		\end{align}
where $\mu^{m_{0} \to \bar{m}}$ is the measure defined by Lemma \ref{lem:linkmeasure}. We know by \Cref{lem:concatenation} that $\tilde\mu^T$ belongs to $\C^{T}(m_{0})$. So we can use $\tilde\mu^T$ as a competitor  in problem  \eqref{eq:lintegral} to get
	\begin{align}\label{eq:first}
	\begin{split}
	& \int_{0}^{T}\int_{\T^{d} \times \R^{d} \times \R^{d}}{\left(\frac{1}{2}|w|^{2} + F(x,v, m^{T}_{t})\right)\ \mu^{T}_{t}(dx,dv,dw)dt}
	\\
	& \qquad \qquad +\ \int_{\T^{d} \times \R^{d} \times \R^{d}}{g(x,v,m^{T}_{T})\ \mu^{T}_{T}(dx,dv,dw)}	
	\\
	&\qquad \leq\  \int_{0}^{1}\int_{\T^{d} \times \R^{d} \times \R^{d}}{\left(\frac{1}{2}|w|^{2} + F(x,v, m^{T}_{t})\right)\ \mu^{m_{0} \to \bar{m}}_{t}(dx,dv,dw)dt}
	\\
	& \qquad \qquad +\ \int_1^T\int_{\T^{d} \times \R^{d} \times \R^{d}}{\left(\frac{1}{2}|w|^{2} + F(x,v, m^{T}_{t})\right)\ \bar{\mu}(dx,dv,dw)}dt
	\\
	& \qquad \qquad +\ \int_{\T^{d} \times \R^{d} \times \R^{d}}{g(x,v,m^{T}_{T})\ \bar\mu(dx,dv,dw)}	. 
	\end{split}
	\end{align}
	
Next we build from $\mu^T$ a competitor for the minimization problem \cref{eq:Ergodicproblem} for which $\bar \mu$ is a minimizer. 
In view of \cite[Proposition 4.2]{bib:YA} (see also \cite[Theorem 6.3]{bib:CM}) there exists a Borel measurable maps $(x,v)\to \gamma_{(x,v)}$ such that, for each $(x,v)\in \T^d\times \R^d$, $\gamma_{(x,v)}$ is a minimizer for $u^T(0,x,v)$ in \eqref{eq:lintegral} and satisfies 
\begin{align}\label{rep.muT}
&  \int_0^T\int_{\T^d\times \R^{2d}} \varphi(x,v,w)\  \mu^T_t(dx,dv,dw) dt =  \int_{\T^d\times \R^d} \int_0^T \varphi( \gamma_{(x,v)}(t),
\dot{ \gamma}_{(x,v)}(t),\ddot{  \gamma}_{(x,v)}(t)) dtm_{0}(dx,dv)
\end{align}
for any test function $\varphi\in C^0_b(\T^d\times \R^{2d})$. By Lemma \ref{lem.jkhzrnedg} and  Remark \ref{rem.jkhzrnedg}, for any $\lambda\geq 2$, there exist Borel measurable maps $(x,v)\to \tilde \gamma_{(x,v)}$ and $(x,v)\to \tau_{(x,v)}$ such that 
\begin{equation}\label{eq.cloclo}
\mbox{\rm $\tilde \gamma_{(x,v)}(0)=
\tilde \gamma_{(x,v)}(T)=x$, $\dot{\tilde \gamma}_{(x,v)}(0)=
\dot{\tilde \gamma}_{(x,v)}(T)=v$ and $\tilde \gamma_{(x,v)}=\gamma_{(x,v)}$ on $[0,\tau_{(x,v)}]$ }
\end{equation} and 
\begin{align}\label{ineq.tildegamma}
 \int_{\tau_{(x,v)}}^T (\frac12 |\ddot{\tilde \gamma}_{(x,v)}(t)|^2+c_F(1+ |\dot{\tilde \gamma}_{(x,v)}(t)|^2)dt \leq  C_3(1+|v|)^2 (\lambda^2 + \lambda^{-2}T ) .
\end{align}
Let us define $\hat \mu^T$ by 
\begin{equation}\label{eq:measure2BIS}
\int_{\T^d\times \R^{2d}} \varphi(x,v,w)\ \hat \mu^T(dx,dv,dw) = T^{-1}  \int_{\T^d\times \R^d}\int_0^T \varphi(t,\tilde \gamma_{(x,v)}(t),
\dot{\tilde \gamma}_{(x,v)}(t),\ddot{ \tilde \gamma}_{(x,v)}(t))dt m_0(dx,dv) 
\end{equation}
for any test function $\varphi\in C^0_b( \T^d\times \R^{2d})$. Note that, by \eqref{eq.cloclo}, $ \hat\mu^T$ belongs to $\C$. 
So using the closed measure $\hat \mu^T$ as a competitor in problem \cref{eq:Ergodicproblem} we deduce that
\begin{align}\label{eq:second}
\begin{split}
	& \int_{\T^{d} \times \R^{d} \times \R^{d}}{\left(\frac{1}{2}|w|^{2} + F(x,v, \bar{m})\right)\ \bar\mu(dx,dv,dw)} 
	\\
	 &\qquad 
	 \leq\ \int_{\T^{d} \times \R^{2d}}{\left(\frac{1}{2}|w|^{2} + F(x,v, \bar{m})\right)\ \hat \mu^{T}(dx,dv,dw)}.
	\end{split}
\end{align}
Note that by the definition of $\hat \mu^T$ in \eqref{eq:measure2BIS} and by \eqref{eq.cloclo} and \eqref{ineq.tildegamma},  we have
\begin{align*}
& T \int_{\T^{d} \times \R^{2d}}{\left(\frac{1}{2}|w|^{2} + F(x,v, \bar{m})\right)\ \hat \mu^{T}(dx,dv,dw)}\\
& \; = \int_{\T^d\times \R^d} \int_0^T (\frac12 |\ddot{\tilde \gamma}_{(x,v)}(t)|^2+F(\tilde \gamma_{(x,v)}(t), \dot{\tilde \gamma}_{(x,v)}(t), \bar m))dt \ m_0(dx,dv)\\ 
& \; \leq \int_{\T^d\times \R^d} \Bigl( \int_0^{\tau_{(x,v)}} (\frac12 |\ddot{ \gamma}_{(x,v)}(t)|^2+F( \gamma_{(x,v)}(t), \dot{ \gamma}_{(x,v)}(t), \bar m) )dt \\
& \qquad \qquad +\int_{\tau_{(x,v)}}^T (\frac12 |\ddot{\tilde \gamma}_{(x,v)}(t)|^2+c_F(1+ |\dot{\tilde \gamma}_{(x,v)}(t)|^2))dt \Bigr) \ m_0(dx,dv) \\ 
& \; \leq \int_{\T^d\times \R^d} \Bigl( \int_0^T (\frac12 |\ddot{ \gamma}_{(x,v)}(t)|^2+F( \gamma_{(x,v)}(t), \dot{ \gamma}_{(x,v)}(t), \bar m) )dt \\
& \qquad \qquad +C_3(1+|v|)^2(\lambda^2  +\lambda^{-2}T )  \Bigr) \ m_0(dx,dv).
\end{align*}
Plugging  this inequality into \eqref{eq:second} and using the representation of $\mu^T$ in \eqref{rep.muT} then gives
\begin{align}
\begin{split}\label{eq:secondBIS}
& \int_{\T^{d} \times \R^{d} \times \R^{d}}{\left(\frac{1}{2}|w|^{2} + F(x,v, \bar{m})\right)\ \bar\mu(dx,dv,dw)}  \\
& \qquad \leq  \int_{\T^{d} \times \R^{2d}}{\left(\frac{1}{2}|w|^{2} + F(x,v, \bar{m})\right)\ \hat \mu^{T}(dx,dv,dw)}\\
& \qquad \leq  T^{-1}\int_{0}^{T}\int_{\T^{d} \times \R^{d} \times \R^{d}}{\left(\frac{1}{2}|w|^{2} + F(x,v, \bar m)\right)\ \mu^{T}_{t}(dx,dv,dw)dt}\\
& \qquad \qquad + 2C_3(1+M_2(m_0))(\lambda^2T^{-1}+\lambda^{-2}), 
\end{split}
\end{align}
where $M_2(m_0)=\int_{\T^d\times \R^d} |v|^2dm_0(x,v)$.
Putting together \eqref{eq:first} and \eqref{eq:secondBIS} (multiplied by $T$)  then implies that 
\begin{align*}
&\int_0^T \int_{\T^d\times \R^{2d}} (\frac12 |w|^2+F(x,v, m^T_t)) d \mu^T_t(x,v,w)  +\int_0^T  \int_{\T^{d} \times \R^{2d}}{\left(\frac{1}{2}|w|^{2} + F(x,v, \bar{m})\right)\ \bar\mu(dx,dv,dw)}dt \\
& \leq\  \int_{0}^{1}\int_{\T^d\times \R^{2d}}{\left(\frac{1}{2}|w|^{2} + F(x,v, m^{T}_{t})\right)\ \mu^{m_{0} \to \bar{m}}_{t}(dx,dv,dw)dt}
	\\
	& \qquad +\ \int_1^T\int_{\T^d\times \R^{2d}}{\left(\frac{1}{2}|w|^{2} + F(x,v, m^{T}_{t})\right)\ \bar{\mu}(dx,dv,dw)}dt
	\\
	& \qquad \qquad +\ \int_{\T^d\times \R^{2d}}{g(x,v,m^{T}_{T})\ \bar\mu(dx,dv,dw)}- \int_{\T^d\times \R^{2d}}{g(x,v,m^{T}_{T})\ \mu^T_T(dx,dv,dw)}	
	\\
	&\qquad+  \int_{0}^{T}\int_{\T^d\times \R^{2d}}{\left(\frac{1}{2}|w|^{2} + F(x,v, \bar m)\right)\ \mu^{T}_{t}(dx,dv,dw)dt}\\
	&\qquad + 2C_3(1+M_2(m_0))(\lambda^2+\lambda^{-\alpha}T).
\end{align*}
Using \eqref{estimum1m2} to bound the first term in the right-hand side (note that $\bar m$ belongs to $\PP_{\alpha,2}(\T^d\times \R^d\times \R^d)$ with $\alpha=2$, so that $\bar m\in \PP_2(\T^d\times \R^d)$) we obtain therefore
\begin{align*}
&\int_0^T \int_{\T^d\times \R^{2d}} (F(x,v, m^T_t)- F(x,v, \bar{m}))\ ( \mu^T_t(dx,dv,dw) -\bar\mu(dx,dv,dw))dt\\
&\qquad  \leq\  C_2(1+ M_2(m_0)+M_2(\bar m)) +2\|g\|_\infty
	+ 2C_3(1+M_2(m_0))(\lambda^2+\lambda^{-2}T).
\end{align*}
We now use the  strong monotonicity condition \cref{monotonicity} and choose $\lambda= T^{1/4}$ to get
\begin{align*}
&\int_0^T \int_{\T^d\times \R^{2d}} (F(x,v, m^T_t)- F(x,v, \bar{m}))^2dxdvdt \leq\  C T^{\frac{1}{2}}
\end{align*}
 for a constant $C$ independent of $T$. Recalling that $F$ satisfies {\bf (F3')}, we obtain \eqref{eq:energyest} by the interpolation inequality \Cref{lem.interpol} in the Appendix. 
\end{proof}

\proof[Proof of Proposition \ref{prop:convergenceMFG}] Throughout the proof, $C$ denotes a constant independent of $T$ and which may change from line to line. Let $\mu^{T}\in \C^T(m_0)$ be associated with a solution $(u^T,m^T)$ of the MFG system \eqref{eq:accMFG} as in \Cref{thm:lintegral}. By \Cref{thm:lintegral} we have that
	\begin{align}\label{eq:infpb}
	\begin{split}
	& \frac{1}{T}\int_{\T^{d} \times \R^{d}}{u^{T}(0,x,v)\ m_{0}(dx,dv)}
	\\
	& \qquad =\  \frac{1}{T}\Bigl\{ \int_{0}^{T}\int_{\T^{d} \times \R^{d} \times \R^{d}}{\left(\frac{1}{2}|w|^{2} + F(x,v, m^{T}_{t}) \right)\ \mu^T_{t}(dx,dv,dw)dt}
	\\
& \qquad \qquad 	+\  \int_{\T^{d} \times \R^{d} \times \R^{d}}{g(x,v,m^{T}_{T})\ \mu^T_{T}(dx,dv,dw)}\Bigr\}\\
	& \qquad =\  \inf_{\mu\in \C^T(m_0)} \frac{1}{T}\Bigl\{ \int_{0}^{T}\int_{\T^{d} \times \R^{d} \times \R^{d}}{\left(\frac{1}{2}|w|^{2} + F(x,v, m^{T}_{t}) \right)\ \mu_{t}(dx,dv,dw)dt}
	\\
& \qquad \qquad 	+\  \int_{\T^{d} \times \R^{d} \times \R^{d}}{g(x,v,m^{T}_{T})\ \mu_{T}(dx,dv,dw)}\Bigr\}.
	\end{split}
	\end{align}
	
We first claim that 
\begin{align}\label{uhezljrsdf2}
\begin{split}
& \limsup_{T\to+\infty} \inf_{\mu\in \C^T(m_0)} \frac{1}{T}\Bigl\{ \int_{0}^{T}\int_{\T^{d} \times \R^{d} \times \R^{d}}{\left(\frac{1}{2}|w|^{2} + F(x,v, m^{T}_{t}) \right)\ \mu_{t}(dx,dv,dw)dt} \\
& \qquad \qquad 	+\  \int_{\T^{d} \times \R^{d} \times \R^{d}}{g(x,v,m^{T}_{T})\ \mu_{T}(dx,dv,dw)}\Bigr\}\\
&\qquad  \leq   \inf_{\tilde \mu\in \C} \Bigl\{ \int_{\T^{d} \times \R^{d} \times \R^{d}}\left(\frac{1}{2}|w|^{2} + F(x,v, \bar{m}) \right)\ \tilde \mu(dx,dv,dw) \Bigr\}.
\end{split}
\end{align}
In order to prove the claim, we first note that, by Young's inequality and  \Cref{prop:energyestimate}, we have, for any $\mu\in \C^T(m_0)$, 
\begin{align}\label{eq:11bis}
\begin{split}
	&   \Bigl|\int_{0}^{T}\int_{\T^{d} \times \R^{d} \times \R^{d}}{\left(F(x,v, m^{T}_{t}) -F(x,v, \bar{m}) \right)\ \mu_{t}(dx,dv,dw)dt}\Bigr| \\
	&\leq  \int_{0}^{T}\int_{\T^{d} \times \R^{d} \times \R^{d}}{ \sup_{(x',v')\in \T^d\times \R^d} \frac{| F(x',v', m^T_t)- F(x',v', \bar{m})|}{(1+|v'|^2)^{\frac{d}{d+1}}} \ (1+|v|^2)^{\frac{d}{d+1}} \ \mu_{t}(dx,dv,dw)dt} \\
	&\leq  \frac{T^{\frac{1}{4}}}{2d+2} \int_{0}^{T} \sup_{(x,v)\in \T^d\times \R^d} \frac{| F(x,v, m^T_t)- F(x,v, \bar{m})|^{2d+2}}{(1+|v|^2)^{2d}}dt 
	\\
	&\qquad \qquad + 
	\frac{(2d+1)T^{-\frac{1}{4(2d+1)}}}{2d+2} \int_0^T\int_{\T^{d} \times \R^{d} \times \R^{d}}{(1+|v|^2)^{\frac{2d}{(2d+1)}} \ \mu_{t}(dx,dv,dw)dt} \\
	& \leq C T^{\frac{3}{4}} + T^{-\frac{1}{4(2d+1)}} \int_0^T\int_{\T^{d} \times \R^{d} \times \R^{d}}{(1+|v|^2) \ \mu_{t}(dx,dv,dw)dt}.
	\end{split}
\end{align}
As $g$ is bounded,  we have therefore, for any $\mu\in \C^T(m_0)$, 
\begin{align}\label{zQLESNRDFV}
\begin{split}
&  \frac{1}{T}\Bigl\{ \int_{0}^{T}\int_{\T^{d} \times \R^{d} \times \R^{d}}{\left(\frac{1}{2}|w|^{2} + F(x,v, m^{T}_{t}) \right)\ \mu_{t}(dx,dv,dw)dt}
	\\
& \qquad \qquad 	+\  \int_{\T^{d} \times \R^{d} \times \R^{d}}{g(x,v,m^{T}_{T})\ \mu_{T}(dx,dv,dw)}\Bigr\}\\ 
& \leq   \frac{1}{T}\Bigl\{ \int_{0}^{T}\int_{\T^{d} \times \R^{d} \times \R^{d}}{\left(\frac{1}{2}|w|^{2} + F(x,v, \bar m) \right)\ \mu_{t}(dx,dv,dw)dt}
	\\
& \qquad \qquad 	+T^{-\frac{1}{4(2d+1)}} \int_0^T\int_{\T^{d} \times \R^{d} \times \R^{d}}{(1+|v|^2) \ \mu_{t}(dx,dv,dw)dt} \Bigr\} + CT^{-\frac14}+T^{-1}\|g\|_\infty. 
\end{split}
\end{align}
Given $\tilde \mu \in \C$, we know from \Cref{lem:linkmeasure} that there exists $\mu^{m_0\to \pi\sharp \tilde \mu}$ such that 
\begin{align}\label{zrabesdf}
& \int_0^1 \int_{\T^d\times \R^{2d}} (\frac12 |w|^2+ c_F(1+|v|^2)) \mu^{m_0\to \pi\sharp \tilde \mu}_t(dx,dv,dw)dt
\leq C_2(1+ M_2(m_0)+M_2(\pi\sharp \tilde \mu)).
\end{align}
Let us then define $\tilde \mu^T$ by
		\begin{align*}
		\tilde\mu^T_{t}=
		\begin{cases}
			\mu^{m_0\to \pi\sharp \tilde \mu}_t, & \quad t \in [0,1] \\
			\tilde \mu, & \quad t \in (1, T],
		\end{cases}
		\end{align*}
By \Cref{lem:linkmeasure}, $\tilde \mu^T$ belongs to $\C^T(m_0)$ and we have, in view of \eqref{zrabesdf},  
\begin{align*}
& T^{-1} \int_{0}^{T}\int_{\T^{d} \times \R^{d} \times \R^{d}}\left(\frac{1}{2}|w|^{2} + F(x,v, \bar{m}) \right)\ \tilde \mu^T_{t}(dx,dv,dw)dt \\
&  \leq C_2T^{-1}(1+ M_2(m_0)+M_2(\pi\sharp \tilde \mu)) + T^{-1}(T-1) \int_{\T^{d} \times \R^{d} \times \R^{d}}\left(\frac{1}{2}|w|^{2} + F(x,v, \bar{m}) \right)\ \tilde  \mu(dx,dv,dw)
\end{align*}
while 
\begin{align*}
&\int_0^T\int_{\T^{d} \times \R^{d} \times \R^{d}}{(1+|v|^2) \ \tilde \mu^T_{t}(dx,dv,dw)dt} 
 \leq C_2(1+ M_2(m_0)+M_2(\pi\sharp \tilde  \mu)) + (T-1) M_2(\pi\sharp\tilde  \mu). 
\end{align*}
Therefore, coming back to \eqref{zQLESNRDFV} and using the $\tilde \mu^T$ built as above from the $\tilde  \mu\in \C$ as competitors, we have  
\begin{align}\label{uhezljrsdf}
\begin{split}
& \inf_{\mu\in \C^T(m_0)} \frac{1}{T}\Bigl\{ \int_{0}^{T}\int_{\T^{d} \times \R^{d} \times \R^{d}}{\left(\frac{1}{2}|w|^{2} + F(x,v, m^{T}_{t}) \right)\ \mu_{t}(dx,dv,dw)dt} \\
& \qquad \qquad 	+\  \int_{\T^{d} \times \R^{d} \times \R^{d}}{g(x,v,m^{T}_{T})\ \mu_{T}(dx,dv,dw)}\Bigr\}\\ 
&\qquad  \leq   \inf_{\tilde  \mu\in \C} \Bigl\{ \int_{\T^{d} \times \R^{d} \times \R^{d}}\left(\frac{1}{2}|w|^{2} + F(x,v, \bar{m}) \right)\ \tilde  \mu(dx,dv,dw) \\
& \qquad\qquad + CT^{-\frac{1}{4(2d+1)}} (1+ M_2(m_0)+M_2(\pi\sharp \tilde \mu)) \Bigr\} + CT^{-\frac14}+T^{-1}\|g\|_\infty.
\end{split}
\end{align}
Since, by assumption {\bf (F2')},
$$
\int_{\T^{d} \times \R^{d} \times \R^{d}} F(x,v, \bar{m})\ \tilde \mu(dx,dv,dw) \geq 
c_F^{-1} M_2(\pi\sharp \tilde \mu)-c_F, 
$$
one easily checks that the limit of the right-hand side of \eqref{uhezljrsdf} as $T\to +\infty$ is 
$$
\inf_{\tilde \mu\in \C} \Bigl\{ \int_{\T^{d} \times \R^{d} \times \R^{d}}\left(\frac{1}{2}|w|^{2} + F(x,v, \bar{m}) \right)\ \tilde \mu(dx,dv,dw)\Bigr\}.
$$
This proves our claim \eqref{uhezljrsdf2}. \\

Next we claim that there exists a closed measure $\hat \mu\in \C$ such that 
\begin{align}\label{iazehrsdfg}
\begin{split}
& \liminf_{T\to+\infty} \frac{1}{T}\Bigl\{ \int_{0}^{T}\int_{\T^{d} \times \R^{d} \times \R^{d}}{\left(\frac{1}{2}|w|^{2} + F(x,v, m^{T}_{t}) \right)\ \mu^T_{t}(dx,dv,dw)dt}	\\
& \qquad \qquad 	+\  \int_{\T^{d} \times \R^{d} \times \R^{d}}{g(x,v,m^{T}_{T})\ \mu^T_{T}(dx,dv,dw)}\Bigr\}\\
& \geq  \int_{\T^{d} \times \R^{d} \times \R^{d}}\left(\frac{1}{2}|w|^{2} + F(x,v, \bar{m}) \right)\ \hat \mu(dx,dv,dw).
\end{split}
\end{align}
For the proof of \eqref{iazehrsdfg}, we work with a subsequence of $T\to+\infty$ (still denoted by $T$) along which the lower limit in the left-hand side is achieved. 
Coming back to \eqref{eq:11bis}, we have 
\begin{align*}
\begin{split}
& \frac{1}{T}\Bigl\{ \int_{0}^{T}\int_{\T^{d} \times \R^{d} \times \R^{d}}{\left(\frac{1}{2}|w|^{2} + F(x,v, m^{T}_{t}) \right)\ \mu^T_{t}(dx,dv,dw)dt}	\\
& \qquad \qquad 	+\  \int_{\T^{d} \times \R^{d} \times \R^{d}}{g(x,v,m^{T}_{T})\ \mu^T_{T}(dx,dv,dw)}\Bigr\}\\
& \geq  \frac{1}{T}\Bigl\{ \int_{0}^{T}\int_{\T^{d} \times \R^{d} \times \R^{d}}{\left(\frac{1}{2}|w|^{2} + F(x,v, \bar m) \right)\ \mu^T_{t}(dx,dv,dw)dt}	\\
& \qquad \qquad -T^{-\frac{1}{4(2d+1)}} \int_0^T (1+M_2(\pi\sharp \mu^T_t))dt	\Bigr\} -CT^{-\frac14}-\|g\|_\infty T^{-1}.
\end{split}
\end{align*}
By the coercivity of $F$ in assumption {\bf (F2')}, we can absorb the second term in the right-hand side into the first one and obtain: 
\begin{align}\label{ualksjfdgBIS}
\begin{split}
& \frac{1}{T}\Bigl\{ \int_{0}^{T}\int_{\T^{d} \times \R^{d} \times \R^{d}}{\left(\frac{1}{2}|w|^{2} + F(x,v, m^{T}_{t}) \right)\ \mu^T_{t}(dx,dv,dw)dt}	\\
& \qquad \qquad 	+\  \int_{\T^{d} \times \R^{d} \times \R^{d}}{g(x,v,m^{T}_{T})\ \mu^T_{T}(dx,dv,dw)}\Bigr\}\\
& \geq  \frac{1}{T} (1- C^{-1}T^{-\frac{1}{4(2d+1)}})  \int_{0}^{T}\int_{\T^{d} \times \R^{d} \times \R^{d}}{\left(\frac{1}{2}|w|^{2} + F(x,v, \bar m) \right)\ \mu^T_{t}(dx,dv,dw)dt}	\\
& \qquad \qquad -CT^{-\frac{1}{4(2d+1)}} -CT^{-\frac14}-\|g\|_\infty T^{-1}.
\end{split}
\end{align}

As in the proof of \Cref{prop:energyestimate} (see \eqref{eq:secondBIS}), for any $\lambda\geq 1$, we can find a closed measure $\hat \mu^T\in \C$ such that 
\begin{align*}
\begin{split}
&  \int_{\T^{d} \times \R^{2d}}{\left(\frac{1}{2}|w|^{2} + F(x,v, \bar{m})\right)\ \hat \mu^{T}(dx,dv,dw)}\\
& \qquad \leq  T^{-1}\int_{0}^{T}\int_{\T^{d} \times \R^{d} \times \R^{d}}{\left(\frac{1}{2}|w|^{2} + F(x,v, \bar m)\right)\ \mu^{T}_{t}(dx,dv,dw)dt}\\
& \qquad \; + 2C_3(1+M_2(m_0))(\lambda^2T^{-1}+\lambda^{-2}). 
\end{split}
\end{align*}
Plugging this inequality into \eqref{ualksjfdgBIS} we find therefore 
\begin{align*}
\begin{split}
& \frac{1}{T}\Bigl\{ \int_{0}^{T}\int_{\T^{d} \times \R^{d} \times \R^{d}}{\left(\frac{1}{2}|w|^{2} + F(x,v, m^{T}_{t}) \right)\ \mu^T_{t}(dx,dv,dw)dt}	\\
& \qquad \qquad 	+\  \int_{\T^{d} \times \R^{d} \times \R^{d}}{g(x,v,m^{T}_{T})\ \mu^T_{T}(dx,dv,dw)}\Bigr\}\\
& \geq  (1- C^{-1}T^{-\frac{1}{4(2d+1)}})  \int_{\T^{d} \times \R^{2d}}{\left(\frac{1}{2}|w|^{2} + F(x,v, \bar{m})\right)\ \hat \mu^{T}(dx,dv,dw)}	 \\
& \qquad \qquad -2C_3(1+M_2(m_0))(\lambda^2T^{-1}+\lambda^{-2}) -CT^{-\frac{1}{4(2d+1)}}.
\end{split}
\end{align*}
By assumption {\bf (F2')}, the functional in the right-hand side of the inequality is coercive for $T$ large enough. So $\hat\mu^{T}$ weakly-$*$ converges (up to a subsequence) to a closed measure $\hat \mu$. Taking the lower-limit in the last inequality then implies  \eqref{iazehrsdfg}. \\

Putting together \eqref{uhezljrsdf2} and \eqref{iazehrsdfg}, we find that $\hat \mu$ is a minimizer in the right-hand side of \eqref{uhezljrsdf2} and that the semi-limits  and the inequalities in  \eqref{uhezljrsdf2} and \eqref{iazehrsdfg} are in fact limits and equalities. So coming back to \eqref{eq:infpb} we find that 
	\begin{align*}
	\begin{split}
	& \lim_{T\to+\infty} \frac{1}{T}\int_{\T^{d} \times \R^{d}}{u^{T}(0,x,v)\ m_{0}(dx,dv)} =  \inf_{\tilde \mu\in \C} \Bigl\{ \int_{\T^{d} \times \R^{d} \times \R^{d}}\left(\frac{1}{2}|w|^{2} + F(x,v, \bar{m}) \right)\ \tilde \mu(dx,dv,dw) \Bigr\}.
	\end{split}
	\end{align*}
The right-hand side of this equality is nothing than but $\bar \lambda$ since $(\bar \lambda,\bar \mu)$ is a solution to the the ergodic MFG problem with $\bar m=\pi\sharp\bar \mu$: this completes the proof of the proposition.   
\qed

To complete the proof of Theorem \ref{thm:theo.main2}, we need  estimates on the oscillation of $u^T$. This comes next: 

\begin{lemma}\label{lem.unifcontuT} For any $R\geq 1$ and $(x,v), (x',v')\in \T^d\times B_R$, we have 
$$
|u^T(0,x,v)-u^T(0,x',v')| \leq CR^2T^{\frac{4d+3}{4(d+1)}},
$$
where $C$ is independent of $T$ and $R$. 
\end{lemma}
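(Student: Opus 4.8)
The plan is to bound the oscillation of $u^T(0,\cdot,\cdot)$ by using the representation formula for $u^T$ together with the curve construction of Lemma~\ref{lem:reachablecurve} and the energy estimate of Proposition~\ref{prop:energyestimate}. The point is that the argument used in Lemma~\ref{lem.bdoscVT} to bound the oscillation of $V^T$ does not directly apply here, because the running cost $F(x,v,m^T_t)$ depends on the (unknown) flow $m^T_t$ and the clean inequality $F\geq 0$ that was available in Section~\ref{sec:ErgodicBehavior} must now be replaced by comparison with the quantity $F(x,v,\bar m)$, which is where Proposition~\ref{prop:energyestimate} enters.

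First I would fix $R\geq 1$ and $(x,v),(x',v')\in\T^d\times B_R$. Let $\gamma^*$ be an optimal curve for $u^T(0,x',v')$ in the representation formula \eqref{eq:MFGvaluefunction}. As in Lemma~\ref{lem.bdoscVT} I would build a competitor $\tilde\gamma\in\Gamma_0(x,v)$ that equals $\sigma(t)$ on $[0,1]$ — where $\sigma$ is the connecting curve of Lemma~\ref{lem:reachablecurve} with $\theta=1$, $\sigma(0)=x$, $\dot\sigma(0)=v$, $\sigma(1)=x'$, $\dot\sigma(1)=v'$ — and equals $\gamma^*(t-1)$ on $[1,T]$. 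Since $\tilde\gamma$ and $\gamma^*$ are not defined on the same interval, the running-cost comparison produces two error terms: the cost of $\sigma$ on $[0,1]$, which by \eqref{eq:boundsigma} is at most $C_2(R^2+R^\alpha)=C_2(R^2+R^2)$ for $\alpha=2$ plus the contribution of $F(\sigma,\dot\sigma,m^T_t)$, bounded by assumption {\bf (F2')} by $c_F(1+\|\dot\sigma\|_\infty^2)\le CR^2$; and the missing integral of $\frac12|\ddot\gamma^*|^2+F(\gamma^*,\dot\gamma^*,m^T_t)$ on the last unit interval $[T-1,T]$, together with a terminal-cost mismatch $g(\gamma^*(T-1),\dot\gamma^*(T-1),m^T_T)-g(\gamma^*(T),\dot\gamma^*(T),m^T_T)$, which is bounded by $2\|g\|_\infty$ using {\bf (G1)}. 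The difficulty is that, unlike in Lemma~\ref{lem.bdoscVT}, the missing last-interval integral cannot simply be dropped (its sign is not obviously favorable once $F$ is only bounded below by $-c_F$): one must control it, and for this I would use the uniform moment bounds on $\mu^T$ coming from \Cref{thm:lintegral}/\Cref{prop:energyestimate} — namely that $T^{-1}\int_0^T\int(\frac12|w|^2+(1+|v|^2))\,\mu^T_t\,dt$ is bounded — but since we need a pointwise-in-$(x,v)$ statement, I would instead observe that along an optimal $\gamma^*$ for $u^T(0,x',v')$ the value $u^T(0,x',v')$ itself is $O(R^2 T)$ by the linear competitor $\xi(t)=x'+tv'$ (as in Lemma~\ref{lem:upperbo}, using {\bf (F2')}), while $u^T$ is bounded below by $-CT$; hence the average cost density on $[0,T]$ along $\gamma^*$ is $O(R^2)$, but to bound the cost on the \emph{specific} interval $[T-1,T]$ I would instead reverse the roles and also build a competitor for $u^T(0,x',v')$ from an optimal curve for $u^T(0,x,v)$, so that the \emph{difference} $u^T(0,x,v)-u^T(0,x',v')$ is controlled symmetrically by $C(R^2+R^\alpha+\|g\|_\infty)$ plus an error that must be shown to be of lower order.

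The main obstacle — and the reason the bound in the statement carries the unusual exponent $T^{(4d+3)/(4(d+1))}$ rather than being $O(R^2)$ — is precisely this last-interval term: it cannot be made $O(1)$ uniformly, and the honest estimate goes through the energy estimate \eqref{eq:energyest}. Concretely, I would compare $u^T(0,x,v)$ and $u^T(0,x',v')$ by routing an optimal trajectory through the flow: pick the trajectory $\gamma_{(x',v')}$ associated with $m^T$ (via the measurable selection of \Cref{lem:miniselection}), truncate it at time $T-1$, and reconnect to $(x,v)$ on a unit interval using Lemma~\ref{lem:reachablecurve} with $\theta=1$ and endpoint velocities $\dot\gamma_{(x',v')}(T-1)$ and $v$; the cost of this reconnection is $C(|\dot\gamma_{(x',v')}(T-1)|^2+R^2)$, and $|\dot\gamma_{(x',v')}(T-1)|$ is \emph{not} bounded a priori. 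To control $\int_{T-1}^T (1+|\dot\gamma_{(x',v')}|^2)\,dt$ on a set of times of measure $1$, I would integrate over a window and use the bound on $\int_0^T\int(1+|v|^2)\mu^T_t\,dt\le CT$ together with the Hölder/Young manipulation exactly as in \eqref{eq:11bis}, which is what generates the power $T^{1/(4(d+1))}$ from the $T^{1/4}$ in \eqref{eq:energyest}; combined with the $R^2$ from Lemma~\ref{lem:reachablecurve} this yields $|u^T(0,x,v)-u^T(0,x',v')|\le C R^2 T^{(4d+3)/(4(d+1))}$. So the key steps are: (i) linear and reconnection competitors to get crude two-sided bounds on $u^T(0,\cdot,\cdot)$; (ii) the symmetric reconnection-of-optimal-trajectories argument as in Lemma~\ref{lem.bdoscVT}; (iii) controlling the last-window cost via the moment estimate on $\mu^T$ and the Young inequality of \eqref{eq:11bis}, feeding in \eqref{eq:energyest}; (iv) optimizing the window length (here fixed to $1$ suffices) and collecting powers of $T$ and $R$. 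The hard part is step (iii): making the last-window velocity integral quantitative, since that is where all the $T$-dependence in the final exponent comes from.
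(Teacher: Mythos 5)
Your toolbox is the right one (the connecting arc of \Cref{lem:reachablecurve}, the a priori bound of \Cref{lem:upperbo}, the energy estimate \eqref{eq:energyest}), but the difficulty you isolate is not the actual one, and the construction you propose to handle it does not work. First, the sign worry is unfounded: assumption {\bf (F2')} is stated with $F(x,v,m)\geq 0$, so after prepending the unit-time arc $\sigma$ joining $(x',v')$ to $(x,v)$ the discarded tail $\int_{T-1}^{T}$ of the optimal curve's cost can simply be dropped, exactly as in \Cref{lem.bdoscVT}; no control of the velocity near $t=T$ or of a ``last window'' is needed. Moreover, your alternative construction (follow $\gamma_{(x',v')}$ up to time $T-1$ and reconnect to $(x,v)$ on a final unit interval) does not produce a competitor for $u^T(0,x,v)$ at all: in \eqref{eq:MFGvaluefunction} the constraint is at the initial time and the terminal state is free, so the connecting arc must be placed at $t=0$, where both endpoint velocities lie in $B_R$ and the arc costs $O(R^2)$ by \Cref{lem:reachablecurve} --- the uncontrolled quantity $|\dot\gamma_{(x',v')}(T-1)|$ never enters the argument.

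The genuine obstruction, which your proposal never squarely addresses, is the time shift: the competitor uses $\gamma(t-1)$ on $[1,T]$, so its running cost involves $F(\gamma(t-1),\dot\gamma(t-1),m^T_t)$ rather than $F(\gamma(t-1),\dot\gamma(t-1),m^T_{t-1})$, and since the coupling is time-dependent this is not directly comparable to the cost actually paid by the optimal curve. This is where \Cref{prop:energyestimate} is really needed, and needed twice: along $\gamma$ in its original parametrization and along the shifted one, one replaces $m^T_t$ by $\bar m$ and back, each swap being estimated by H\"older as
$$
\int_0^T\bigl|F(\gamma,\dot\gamma,m^T_t)-F(\gamma,\dot\gamma,\bar m)\bigr|\,dt \;\le\; C\,T^{\frac{1}{4(d+1)}}\Bigl(\int_0^T(1+|\dot\gamma(t)|^2)\,dt\Bigr)^{\frac{2d+1}{2d+2}} \;\le\; C\,T^{\frac{4d+3}{4(d+1)}}\,(1+R^2)^{\frac{2d+1}{2d+2}},
$$
the velocity integral being bounded by $c_FT(1+R^2)$ thanks to {\bf (F2')} together with \Cref{lem:upperbo}. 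This, and not an estimate on a unit window near $t=T$, is the sole source of the exponent $\tfrac{4d+3}{4(d+1)}$; your accounting, which extracts only $T^{1/(4(d+1))}$ from a Young-type manipulation on a window of measure one, does not produce the stated power of $T$. Once the two swaps are in place, one drops the tail using $F(\cdot,\cdot,\bar m)\ge 0$, pays $O(R^2)$ for the arc and $O(\|g\|_\infty)$ for the terminal terms, and concludes; as written, your proposal leaves this central step unproved.
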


\begin{proof} Let $\gamma\in \Gamma(x,v)$ be optimal for $u^T(0,x,v)$ in \eqref{eq:MFGvaluefunction}. We define $\tilde \gamma\in \Gamma(x',v')$ by 
$$
\tilde \gamma(t) = \left\{\begin{array}{ll}
\sigma(t) & {\rm if }\; t\in [0,1]\\
\gamma(t-1) & {\rm if }\; t\in [1,T].
\end{array}\right.
$$
where $\sigma$ is as in \Cref{lem:reachablecurve} with $\sigma(0)=x'$, $\dot\sigma(0)=v'$, $\sigma(1)=x$, $\dot\sigma(1)=v$ and 
$$
\int_0^1 \Bigl( \frac12 |\ddot{\sigma}(t)|^2+ F(\sigma(t), \dot{\sigma}(t), m^T_t)\Bigr)dt \leq 2C_2R^2.
$$
Note that, as the problem for $u^T$ depends on time through $(m^T_t)$,  the cost associated with $\tilde \gamma$ could be quite far from the cost associated with $\gamma$. To overcome this issue, we use in a crucial way \Cref{prop:energyestimate}. Indeed, applying \eqref{eq:energyest} in \Cref{prop:energyestimate}, we have
\begin{align*}
&\int_0^T \left| F(\gamma(t), \dot \gamma(t),m^T_t)-F(\gamma(t), \dot \gamma(t),\bar m))\right|\ dt \\
& \qquad \leq \int_0^T (1+|\dot \gamma(t)|^2)^{\frac{d}{d+1}} \sup_{(y,z)\in \T^d\times \R^d} \frac{| F(y,z, m^T_t)- F(y,z, \bar{m})|}{(1+|v|^2)^{\frac{d}{d+1}}} dt\\
& \qquad \leq \left( \int_0^T (1+|\dot \gamma(t)|^2)^{\frac{2d}{2d+1}}dt \right)^{\frac{2d+1}{2d+2}}
\left( \int_0^T \sup_{(y,z)\in \T^d\times \R^d} \frac{| F(y,z, m^T_t)- F(y,z, \bar{m})|^{2d+2}}{(1+|v|^2)^{2d}} dt\right)^{\frac{1}{2d+2}}\\ 
& \qquad \leq  
C T^{\frac{1}{4(d+1)}}\left( \int_0^T (1+|\dot \gamma(t)|^2)dt \right)^{\frac{2d+1}{2d+2}}. 
\end{align*}
We have by assumption {\bf (F2')} and \Cref{lem:upperbo} that 
\begin{equation}\label{ezjlkrfdg}
\int_0^T (c_F^{-1}|\dot \gamma(t)|^2-c_F) dt \leq u^T(0,x,v) \leq c_FT(1+|v|^2 ). 
\end{equation}
Therefore 
\begin{align}\label{sdflfngcdfjkcv}
&\int_0^T \left| F(\gamma(t), \dot \gamma(t),m^T_t)-F(\gamma(t), \dot \gamma(t),\bar m))\right| \leq  C  T^{\frac{4d+3}{4(d+1)}}
(1+R^2)^{\frac{2d+1}{2d+2}}.
\end{align}
For the very same reason  we also have 
\begin{align}\label{sdflfngcdfjkcvBIS}
&\int_1^T \left| F(\gamma(t-1), \dot \gamma(t-1),m^T_t)-F(\gamma(t-1), \dot \gamma(t-1),\bar m))\right| \leq  C  T^{\frac{4d+3}{4(d+1)}}
(1+R^2)^{\frac{2d+1}{2d+2}},
\end{align}
because we only used the optimality of $\gamma$ only in the estimate \eqref{ezjlkrfdg}. So, by \eqref{sdflfngcdfjkcv} and \eqref{sdflfngcdfjkcvBIS} we obtain
\begin{align*}
& u^T(0,x',v')\leq \int_0^T \Bigl(\frac12 |\ddot{\tilde \gamma}(t)|^2+ F(\tilde \gamma(t), \dot{\tilde \gamma}(t), m^T_t)\Bigr)dt \\
& =  \int_0^1  \Bigl( \frac12 |\ddot{\sigma}(t)|^2+ F(\sigma(t), \dot{\sigma}(t), m^T_t)\Bigr)dt + \int_0^{T-1} \Bigl(\frac12 |\ddot{ \gamma}(t-1)|^2+ F( \gamma(t-1), \dot{ \gamma}(t-1), m^T_{t})\Bigr)dt \\
& \qquad \leq 2C_2R^2+ \int_0^{T} \Bigl(\frac12 |\ddot{ \gamma}(t)|^2+ F( \gamma(t), \dot{ \gamma}(t), \bar m)\Bigr)dt +C T^{\frac{4d+3}{4(d+1)}} (1+R^2)^{\frac{2d+1}{2d+2}}\\
&  \leq 2C_2R^2+ \int_0^{T} \Bigl(\frac12 |\ddot{ \gamma}(t)|^2+ F( \gamma(t), \dot{ \gamma}(t), m^T_{t})\Bigr)dt+2C T^{\frac{4d+3}{4(d+1)}} (1+R^2)^{\frac{2d+1}{2d+2}} \\
&  
\leq u^T(0,x,v) +2C_2R^2 + 2C T^{\frac{4d+3}{4(d+1)}} (1+R^2)^{\frac{2d+1}{2d+2}}, 
\end{align*}
from which the result derives easily. 
\end{proof}


\begin{proof}[Proof of Theorem \ref{thm:theo.main2}] Proposition \ref{prop.exuniErgoMFGeq} states the existence of a solution for the ergodic MFG system and its uniqueness under assumption \eqref{monotonicity}. From Proposition \ref{prop:convergenceMFG} we know  that 
$$
\lim_{T\to+\infty} \frac{1}{T} \int_{\T^d\times \R^d} u^T(0,x,v)m_0(dx,dv) = \bar \lambda.
$$
It remains to prove the local uniform convergence of $u^T$ to $\bar \lambda$. Fix $R>0$ and $\eps>0$.  We have by \Cref{lem:upperbo} that 
\begin{equation}\label{kuzhjjneqsdfcxv}
0\leq u^T(0,x,v)\leq c_FT(1+|v|^2 ). 
\end{equation}
As $m_0\in \PP_2(\T^d\times \R^d)$, there exists $R'\geq R$ such that 
\begin{equation}\label{kuzhjjneqsdfcxvBIS}
 \int_{\T^d\times (\R^d\backslash B_{R'})}(1+|v|^2) m_0(dx,dv) \leq \eps.
\end{equation}
Then, for any $(x_0,v_0)\in \T^d\times B_R$, we have, by Lemma \ref{lem.unifcontuT},  \eqref{kuzhjjneqsdfcxv} and \eqref{kuzhjjneqsdfcxvBIS}, 
\begin{align*}
&|\frac{1}{T}u^T(0,x_0,v_0) - \bar \lambda | \leq  \left|  \frac{1}{T} \int_{\T^d\times \R^d} u^T(0,x,v)m_0(dx,dv) -\bar \lambda\right| \\
& \qquad  + 
 \frac{1}{T} \int_{\T^d\times B_{R'}} \left|u^T(0,x,v)- u^T(0,x_0,v_0)\right|m_0(dx,dv)\\
 & \qquad  +  \frac{1}{T} \int_{\T^d\times (\R^d\backslash B_{R'})} (|u^T(0,x,v)|+|u^T(0,x_0,v_0)|)m_0(dx,dv)   \\
& \qquad  \leq \left|  \frac{1}{T} \int_{\T^d\times \R^d} u^T(0,x,v)m_0(dx,dv) -\bar \lambda\right|  + CT^{-1}(R')^2T^{\frac{4d+3}{4(d+1)}} +c_F\eps(2+R^2), 
\end{align*}
from which the local uniform convergence of $u^T(0,\cdot,\cdot)/T$ to $\bar \lambda$ can be obtained easily. 
\end{proof}

\appendix

\section{Von Neumann minmax theorem}

Let $\mathbb{A}$, $\mathbb{B}$ be convex sets of some vector spaces and let us suppose that $\mathbb{B}$ is endowed with some Hausdorff topology. Let $\mathcal{L}: \mathbb{A} \times \mathbb{B} \rightarrow \mathbb{R}$ be a saddle function satisfying
\begin{enumerate}
\item $a \mapsto \mathcal{L}(a,b)$ is concave in $\mathbb{A}$ for every $b \in \mathbb{B}$, 
\item $b \mapsto \mathcal{L}(a,b)$ is convex in $\mathbb{B}$ for every $a \in \mathbb{A}$. 	
\end{enumerate}

It is always true that
\begin{equation*}
\inf_{b \in \mathbb{B}}\sup_{a \in \mathbb{A}} \mathcal{L}(a,b) \geq \sup_{a \in \mathbb{A}}\inf_{b \in \mathbb{B}} \mathcal{L}(a,b)	.
\end{equation*}

\begin{theorem}[\cite{bib:OPA}]\label{thm:minmax}
	Assume that there exists $a^{*} \in \mathbb{A}$ and $c^{*} > \sup_{a \in \mathbb{A}} \inf_{b \in\mathbb{B}} \mathcal{L}(a,b)$ such that
	\begin{equation*}
	\mathbb{B}^{*}:= \left\{b \in \mathbb{B}: \mathcal{L}(a^{*}, b) \leq c^{*} \right\}	
	\end{equation*}
is not empty and compact in $\mathbb{B}$, and that $b \mapsto \mathcal{L}(a,b)$ is lower semicontinuous in $\mathbb{B}^{*}$ for every $a \in \mathbb{A}$. 

Then
\begin{equation*}
\min_{b \in \mathbb{B}}\sup_{a \in \mathbb{A}}\mathcal{L}(a,b) = \sup_{a \in \mathbb{A}} \inf_{b \in \mathbb{B}} \mathcal{L}(a,b). 	
\end{equation*}
\end{theorem}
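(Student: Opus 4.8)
\textbf{Plan for the proof of Theorem \ref{thm:minmax} (von Neumann-type minmax).}
The plan is to reduce the asserted equality to a standard general minmax theorem (e.g. Sion's theorem, or the version in \cite{bib:OPA}) applied not on all of $\mathbb{B}$ but on the compact level set $\mathbb{B}^*$. First I would note that, since $c^* > \sup_{a\in\mathbb{A}}\inf_{b\in\mathbb{B}}\mathcal L(a,b) \geq \inf_{b\in\mathbb{B}}\mathcal L(a^*,b)$, the set $\mathbb{B}^*$ is nonempty, and by hypothesis it is compact in $\mathbb{B}$; it is also convex because $b\mapsto\mathcal L(a^*,b)$ is convex. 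The key reduction step is the observation that replacing $\mathbb{B}$ by $\mathbb{B}^*$ does not change the value of the inner infimum in $\sup_a\inf_b$ nor, crucially, the value of $\inf_b\sup_a$: indeed for any $b\notin\mathbb{B}^*$ we have $\mathcal L(a^*,b) > c^* > \sup_a\inf_b\mathcal L$, so such a $b$ can never be a (near-)minimizer of $b\mapsto\sup_{a\in\mathbb{A}}\mathcal L(a,b)$, because that supremum is at least $\mathcal L(a^*,b) > c^*$, whereas taking any fixed $\tilde b\in\mathbb{B}^*$ gives $\sup_a\mathcal L(a,\tilde b)$ — and one checks (using concavity in $a$ together with a one-line argument, or simply that $\inf_{b\in\mathbb{B}^*}\sup_a\mathcal L(a,b)$ is what we will ultimately compute) that the infimum over $\mathbb{B}$ equals the infimum over $\mathbb{B}^*$. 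Likewise $\inf_{b\in\mathbb{B}}\mathcal L(a,b)=\min\{\inf_{b\in\mathbb{B}^*}\mathcal L(a,b),\ \inf_{b\notin\mathbb{B}^*}\mathcal L(a,b)\}$, and one must be a little careful here, but the relevant inequality $\sup_a\inf_{b\in\mathbb{B}}\mathcal L \le \sup_a\inf_{b\in\mathbb{B}^*}\mathcal L$ is what is needed and it follows since $\inf_{b\in\mathbb{B}}\mathcal L(a,b)\le \inf_{b\in\mathbb{B}^*}\mathcal L(a,b)$ trivially, so that direction is automatic; the other direction is the content of the theorem.

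Next I would invoke the classical minmax theorem on $\mathbb{A}\times\mathbb{B}^*$: here $\mathbb{A}$ is convex and $a\mapsto\mathcal L(a,b)$ is concave; $\mathbb{B}^*$ is convex and \emph{compact}, $b\mapsto\mathcal L(a,b)$ is convex and, by hypothesis, lower semicontinuous on $\mathbb{B}^*$ for every $a$. These are exactly the hypotheses of Sion's minmax theorem (a convex set against a convex compact set, with the appropriate concavity/convexity and one-sided semicontinuity), which yields
\begin{equation*}
\min_{b\in\mathbb{B}^*}\sup_{a\in\mathbb{A}}\mathcal L(a,b) = \sup_{a\in\mathbb{A}}\inf_{b\in\mathbb{B}^*}\mathcal L(a,b),
\end{equation*}
the minimum on the left being attained thanks to compactness of $\mathbb{B}^*$ and lower semicontinuity of $b\mapsto\sup_a\mathcal L(a,b)$ (a supremum of l.s.c. functions is l.s.c.).

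Finally I would stitch the two facts together. From the reduction step, $\inf_{b\in\mathbb{B}}\sup_{a\in\mathbb{A}}\mathcal L(a,b)=\min_{b\in\mathbb{B}^*}\sup_{a\in\mathbb{A}}\mathcal L(a,b)$, and this minimum is attained, so the left-hand side is in fact a minimum over $\mathbb{B}$. Combining with Sion's equality and with $\sup_a\inf_{b\in\mathbb{B}^*}\mathcal L \le \sup_a\inf_{b\in\mathbb{B}}\mathcal L$ wait — the inequality goes the convenient way: $\inf_{b\in\mathbb{B}}\mathcal L(a,b)\le\inf_{b\in\mathbb{B}^*}\mathcal L(a,b)$, hence $\sup_a\inf_{b\in\mathbb{B}}\mathcal L\le\sup_a\inf_{b\in\mathbb{B}^*}\mathcal L$; together with the always-true reverse weak duality inequality $\inf_b\sup_a\mathcal L\ge\sup_a\inf_b\mathcal L$ and Sion on $\mathbb{B}^*$, we get
\begin{equation*}
\sup_{a\in\mathbb{A}}\inf_{b\in\mathbb{B}}\mathcal L(a,b) \le \sup_{a\in\mathbb{A}}\inf_{b\in\mathbb{B}^*}\mathcal L(a,b) = \min_{b\in\mathbb{B}^*}\sup_{a\in\mathbb{A}}\mathcal L(a,b) = \min_{b\in\mathbb{B}}\sup_{a\in\mathbb{A}}\mathcal L(a,b) \le \text{?}
\end{equation*}
and closing the loop with weak duality $\min_b\sup_a\mathcal L\ge\sup_a\inf_b\mathcal L$ forces all these quantities to coincide, giving the claimed identity with the infimum on $\mathbb{B}$ attained. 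The main obstacle I anticipate is the reduction step: one must argue carefully that passing from $\mathbb{B}$ to the compact sublevel set $\mathbb{B}^*$ is legitimate for \emph{both} sides simultaneously — in particular that $\inf_{b\in\mathbb{B}}\sup_a\mathcal L=\inf_{b\in\mathbb{B}^*}\sup_a\mathcal L$, which uses precisely that any $b\notin\mathbb{B}^*$ has $\sup_a\mathcal L(a,b)\ge\mathcal L(a^*,b)>c^*>\sup_a\inf_b\mathcal L$, so it cannot beat the (soon to be shown finite, $\le c^*$) value $\inf_{b\in\mathbb{B}^*}\sup_a\mathcal L$; everything else is a bookkeeping of inequalities around weak duality, and the actual analytic content is entirely outsourced to the cited classical minmax theorem.
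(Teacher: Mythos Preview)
The paper does not give its own proof of this statement: Theorem~\ref{thm:minmax} appears in Appendix~A purely as a result quoted from \cite{bib:OPA}, with no argument supplied. There is therefore nothing in the paper to compare your attempt against.

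As for the plan itself, the overall strategy --- restrict to the compact convex sublevel set $\mathbb{B}^*$ and invoke Sion's theorem there --- is the standard one. However, your final ``closing the loop'' step does not actually close anything. Writing $A=\sup_{a}\inf_{\mathbb{B}}\mathcal L$, $B=\sup_{a}\inf_{\mathbb{B}^*}\mathcal L$, $C=\min_{\mathbb{B}^*}\sup_{a}\mathcal L$, $D=\inf_{\mathbb{B}}\sup_{a}\mathcal L$, you obtain $A\le B=C$ (trivial inclusion plus Sion) and claim $C=D$ via the reduction; then you appeal to weak duality $D\ge A$. But $D\ge A$ is the \emph{same} inequality as $A\le D$: you have proved $A\le D$ twice and never the reverse. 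The chain $A\le B=C=D$ together with $A\le D$ does not force equality.

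The missing ingredient is $B\le A$, i.e.\ that restricting the inner infimum from $\mathbb{B}$ to $\mathbb{B}^*$ does not increase $\sup_a\inf_b\mathcal L$. This is not automatic --- shrinking the domain of an infimum can only raise it --- and it is precisely here that concavity in $a$ together with the special structure of $\mathbb{B}^*$ as a sublevel set of $\mathcal L(a^*,\cdot)$ must be used (for instance by interpolating between $a^*$ and a generic $a$ and using convexity in $b$ to manufacture, for each $a$, a near-minimiser lying in $\mathbb{B}^*$). Your parenthetical ``(soon to be shown finite, $\le c^*$)'' in fact flags this very gap: the bound $C\le c^*$ that you need for the reduction $C=D$ is equivalent, via Sion, to $B\le c^*$, and what you really need is the sharper $B\le A$. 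Once $B=A$ is established, everything else follows cleanly: $A=B=C\ge D\ge A$ forces $A=D$, and the minimum over $\mathbb{B}$ is attained at the minimiser of $C$ in $\mathbb{B}^*$.
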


\section{An interpolation inequality}

\begin{lemma}\label{lem.interpol} Assume that $f:\T^d\times \R^d\to \R$ is locally Lipschitz continuous with 
\begin{equation}\label{hyphypapp2}
|f(x,v)|+ |D_xf(x,v)|+|D_vf(x,v)|\leq c_0(1+|v|^\alpha)\qquad \mbox{\rm for a.e. $(x,v)\in \T^d\times \R^d$}
\end{equation}
for some constants $c_0>0$ and $\alpha\in (1, 2]$. There exists a constants $C_d>0$ (depending on dimension only) such that  
$$
\sup_{(x,v)\in \T^d\times \R^d} \frac{|f(x,v)|^{2d+2}}{(1+|v|^\alpha)^{2d}} \leq C_d c_0^{2d} \int_{\T^d\times \R^d} |f(x,v)|^2dxdv.
$$
\end{lemma}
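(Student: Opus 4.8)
The statement is a Gagliardo--Nirenberg type interpolation: it controls the (weighted) sup-norm of $f$ by its $L^2$-norm, using the gradient bound \eqref{hyphypapp2} to replace the usual $W^{1,\infty}$ term. The plan is to argue by a local mean-value/averaging argument on small cubes whose size is adapted to the point where the (weighted) supremum is nearly attained. First I would fix $(x_0,v_0)\in\T^d\times\R^d$, set $M:=1+|v_0|^\alpha$, and look at the value $|f(x_0,v_0)|$. On a cube $Q_r(x_0,v_0)$ of side $r\in(0,1)$ centered at $(x_0,v_0)$ (intersected with $\T^d\times\R^d$), assumption \eqref{hyphypapp2} gives, for $|v|$ in that cube, $|v|^\alpha\le 2^\alpha(|v_0|^\alpha+r^\alpha)\le C M$, hence $|D_xf|+|D_vf|\le Cc_0 M$ there. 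Therefore for any $(x,v)\in Q_r(x_0,v_0)$ we have by the fundamental theorem of calculus along the segment joining $(x,v)$ to $(x_0,v_0)$ that
\[
|f(x_0,v_0)| \le |f(x,v)| + C c_0 M\, r .
\]

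\textbf{Main step.} Averaging this inequality over $(x,v)\in Q_r(x_0,v_0)$ (whose Lebesgue measure is comparable to $r^{2d}$, up to the boundary effect in the periodic $x$-variable which is harmless since $r<1$) gives
\[
|f(x_0,v_0)| \le \frac{1}{|Q_r|}\int_{Q_r(x_0,v_0)}|f(x,v)|\,dx\,dv + C c_0 M r
\le \frac{C}{r^{d}}\Bigl(\int_{\T^d\times\R^d}|f|^2\Bigr)^{1/2} + C c_0 M r,
\]
where in the last step I used Cauchy--Schwarz: $\int_{Q_r}|f|\le |Q_r|^{1/2}\|f\|_{L^2}\le C r^{d}\|f\|_{L^2}$. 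Now I optimize in $r\in(0,1)$: the two terms balance at $r_*$ given by $r_*^{d+1}\sim \|f\|_{L^2}/(c_0 M)$, i.e. $r_*\sim\bigl(\|f\|_{L^2}/(c_0M)\bigr)^{1/(d+1)}$. Plugging $r_*$ in (if $r_*\ge 1$ the estimate is even easier, since then $\|f\|_{L^2}\gtrsim c_0 M$ and the claimed bound is immediate from $|f(x_0,v_0)|\le c_0 M$) yields
\[
|f(x_0,v_0)| \le C (c_0 M)^{\frac{d}{d+1}} \|f\|_{L^2}^{\frac{1}{d+1}} .
\]
Raising to the power $2d+2$ and dividing by $M^{2d}=(1+|v_0|^\alpha)^{2d}$ gives exactly
\[
\frac{|f(x_0,v_0)|^{2d+2}}{(1+|v_0|^\alpha)^{2d}} \le C_d\, c_0^{2d}\int_{\T^d\times\R^d}|f|^2 ,
\]
and taking the supremum over $(x_0,v_0)$ finishes the proof.

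\textbf{Where the difficulty lies.} The argument is essentially routine; the only points requiring a little care are: (i) making sure the gradient bound, which degrades like $1+|v|^\alpha$, is genuinely \emph{frozen} at the scale $M=1+|v_0|^\alpha$ on the small cube --- this is why $r<1$ is imposed, so that $r^\alpha\le 1\le M$ and the velocity oscillation over the cube cannot change the order of magnitude of the weight; (ii) the boundary/compact-support issue: since $f$ need not have compact support, the $L^2$ integral is over all of $\T^d\times\R^d$, but the local averaging only ever uses a bounded cube, so no integrability problem arises (and \eqref{hyphypapp2} guarantees $|f|$ is at worst polynomially growing, which is irrelevant here); (iii) treating the trivial regime $r_*\ge 1$ separately, as noted above. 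None of these is a real obstacle, so I would expect the write-up to be short.
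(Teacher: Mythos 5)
Your proof is correct and rests on the same idea as the paper's: localize at a scale adapted to the weight $1+|v_0|^\alpha$, on which the gradient bound is effectively frozen, and compare the pointwise value of $f$ with its $L^2$ norm. The only difference is bookkeeping: you average over a cube of radius $r$, use Cauchy--Schwarz and optimize in $r$ (treating $r_*\ge 1$ separately), whereas the paper directly takes the radius $R=\frac{|f(x_0,v_0)|}{2c_0(3+2|v_0|^\alpha)}$ (automatically $\le 1$ by the bound on $|f|$), observes that $|f|\ge \frac{|f(x_0,v_0)|}{2}$ on the corresponding ball, and integrates $|f|^2$ there, which avoids both the optimization and the case distinction.
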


\begin{proof}  Let $(x_0, v_0)\in \T^d\times \R^d$ be such that $f(x_0, v_0)\neq 0$ and let $R= \frac{|f(x_0,v_0)|}{2c_0(3+2|v_0|^\alpha)}$. Note that, by our assumption on $|f|$ in \eqref{hyphypapp2}, $R$ is less than $1$.  Then, for any $(x,v)\in B_R(x_0,v_0)$, we have by assumption \eqref{hyphypapp2} that 
$$
|D_xf(x,v)|+|D_vf(x,v)|\leq  c_0(1+(1+|v_0|)^\alpha) \leq c_0(1+2^{\alpha-1}+ 2^{\alpha-1}|v_0|^\alpha) \leq c_0(3+2|v_0|^\alpha),
$$
(where we used the fact that $R\leq 1$ and that $(a+b)^\alpha\leq 2^{\alpha-1}(a^\alpha+b^\alpha)$ in the first inequality and the fact that $\alpha\leq 2$ in the second one). Therefore 
$$
|f(x,v)|\geq |f(x_0,v_0)| - c_0(3+2|v_0|^\alpha)R = \frac{|f(x_0, v_0)|}{2}.
$$
Taking the square and integrating over $B_R(x_0,v_0)$ gives
$$
\int_{\T^d\times \R^d} |f(x,v)|^2dxdv \geq |B_1| R^{2d} \frac{|f(x_0,v_0)|^2}{4}= |B_1| \frac{|f(x_0,v_0)|^{2d+2}}{2^{2d+2}c_0^{2d}(3+2|v_0|^\alpha)^{2d}}\ ,
$$
 which implies the result. 
\end{proof}

\bibliographystyle{plain}

\end{document}